\chardef\@x10\chardef\@xv60
\def\tcitime{
\def\@time{%
  \@minute\time\@hour\@minute\divide\@hour\@xv
  \ifnum\@hour<\@x 0\fi\the\@hour:%
  \multiply\@hour\@xv\advance\@minute-\@hour
  \ifnum\@minute<\@x 0\fi\the\@minute
  }}%
\def\QCTOpt[#1]#2{%
  \def\QCTOptB{#1}
  \def\QCTOptA{#2}
}
\def\QCTNOpt#1{%
  \def\QCTOptA{#1}
  \let\QCTOptB\empty
}
\def\Qct{%
  \@ifnextchar[{%
    \QCTOpt}{\QCTNOpt}
}
\def\QCBOpt[#1]#2{%
  \def\QCBOptB{#1}
  \def\QCBOptA{#2}
}
\def\QCBNOpt#1{%
  \def\QCBOptA{#1}
  \let\QCBOptB\empty
}
\def\Qcb{%
  \@ifnextchar[{%
    \QCBOpt}{\QCBNOpt}
}
\def\PrepCapArgs{%
  \ifx\QCBOptA\empty
    \ifx\QCTOptA\empty
      {}%
    \else
      \ifx\QCTOptB\empty
        {\QCTOptA}%
      \else
        [\QCTOptB]{\QCTOptA}%
      \fi
    \fi
  \else
    \ifx\QCBOptA\empty
      {}%
    \else
      \ifx\QCBOptB\empty
        {\QCBOptA}%
      \else
        [\QCBOptB]{\QCBOptA}%
      \fi
    \fi
  \fi
}
\def\GRAPHICSPS#1{%
 \ifcase\GRAPHICSTYPE
   \special{ps: #1}%
 \or
   \special{language "PS", include "#1"}%
 \fi
}%
\def\graffile#1#2#3#4{%
    \leavevmode
    \raise -#4 \BOXTHEFRAME{%
        \hbox to #2{\raise #3\hbox to #2{\null #1\hfil}}}%
}%
\def\draftbox#1#2#3#4{%
 \leavevmode\raise -#4 \hbox{%
  \frame{\rlap{\protect\tiny #1}\hbox to #2%
   {\vrule height#3 width\z@ depth\z@\hfil}%
  }%
 }%
}%
\newif\ifwasdraft
\def\GRAPHIC#1#2#3#4#5{%
 \ifnum\draft=\@ne\draftbox{#2}{#3}{#4}{#5}%
  \else\graffile{#1}{#3}{#4}{#5}%
  \fi
 }%
\def\addtoLaTeXparams#1{%
    \edef\LaTeXparams{\LaTeXparams #1}}%
\newif\ifBoxFrame \BoxFramefalse
\newif\ifOverFrame \OverFramefalse
\newif\ifUnderFrame \UnderFramefalse
\def\BOXTHEFRAME#1{%
   \hbox{%
      \ifBoxFrame
         \frame{#1}%
      \else
         {#1}%
      \fi
   }%
}
\def\doFRAMEparams#1{\BoxFramefalse\OverFramefalse\UnderFramefalse\readFRAMEparams#1\end}%
\def\readFRAMEparams#1{%
 \ifx#1\end%
  \let\next=\relax
  \else
  \ifx#1i\dispkind=\z@\fi
  \ifx#1d\dispkind=\@ne\fi
  \ifx#1f\dispkind=\tw@\fi
  \ifx#1t\addtoLaTeXparams{t}\fi
  \ifx#1b\addtoLaTeXparams{b}\fi
  \ifx#1p\addtoLaTeXparams{p}\fi
  \ifx#1h\addtoLaTeXparams{h}\fi
  \ifx#1X\BoxFrametrue\fi
  \ifx#1O\OverFrametrue\fi
  \ifx#1U\UnderFrametrue\fi
  \ifx#1w
    \ifnum\draft=1\wasdrafttrue\else\wasdraftfalse\fi
    \draft=\@ne
  \fi
  \let\next=\readFRAMEparams
  \fi
 \next
 }%
\def\IFRAME#1#2#3#4#5#6{%
      \bgroup
      \let\QCTOptA\empty
      \let\QCTOptB\empty
      \let\QCBOptA\empty
      \let\QCBOptB\empty
      #6%
      \parindent=0pt%
      \leftskip=0pt
      \rightskip=0pt
      \setbox0 = \hbox{\QCBOptA}%
      \@tempdima = #1\relax
      \ifOverFrame
          \typeout{This is not implemented yet}%
          \show\HELP
      \else
         \ifdim\wd0>\@tempdima
            \advance\@tempdima by \@tempdima
            \ifdim\wd0 >\@tempdima
               \textwidth=\@tempdima
               \setbox1 =\vbox{%
                  \noindent\hbox to \@tempdima{\hfill\GRAPHIC{#5}{#4}{#1}{#2}{#3}\hfill}\\%
                  \noindent\hbox to \@tempdima{\parbox[b]{\@tempdima}{\QCBOptA}}%
               }%
               \wd1=\@tempdima
            \else
               \textwidth=\wd0
               \setbox1 =\vbox{%
                 \noindent\hbox to \wd0{\hfill\GRAPHIC{#5}{#4}{#1}{#2}{#3}\hfill}\\%
                 \noindent\hbox{\QCBOptA}%
               }%
               \wd1=\wd0
            \fi
         \else
            \ifdim\wd0>0pt
              \hsize=\@tempdima
              \setbox1 =\vbox{%
                \unskip\GRAPHIC{#5}{#4}{#1}{#2}{0pt}%
                \break
                \unskip\hbox to \@tempdima{\hfill \QCBOptA\hfill}%
              }%
              \wd1=\@tempdima
           \else
              \hsize=\@tempdima
              \setbox1 =\vbox{%
                \unskip\GRAPHIC{#5}{#4}{#1}{#2}{0pt}%
              }%
              \wd1=\@tempdima
           \fi
         \fi
         \@tempdimb=\ht1
         \advance\@tempdimb by \dp1
         \advance\@tempdimb by -#2%
         \advance\@tempdimb by #3%
         \leavevmode
         \raise -\@tempdimb \hbox{\box1}%
      \fi
      \egroup%
}%
\def\DFRAME#1#2#3#4#5{%
 \begin{center}
     \let\QCTOptA\empty
     \let\QCTOptB\empty
     \let\QCBOptA\empty
     \let\QCBOptB\empty
     \ifOverFrame 
        #5\QCTOptA\par
     \fi
     \GRAPHIC{#4}{#3}{#1}{#2}{\z@}
     \ifUnderFrame 
        \nobreak\par #5\QCBOptA
     \fi
 \end{center}%
 }%
\def\FFRAME#1#2#3#4#5#6#7{%
 \begin{figure}[#1]%
  \let\QCTOptA\empty
  \let\QCTOptB\empty
  \let\QCBOptA\empty
  \let\QCBOptB\empty
  \ifOverFrame
    #4
    \ifx\QCTOptA\empty
    \else
      \ifx\QCTOptB\empty
        \caption{\QCTOptA}%
      \else
        \caption[\QCTOptB]{\QCTOptA}%
      \fi
    \fi
    \ifUnderFrame\else
      \label{#5}%
    \fi
  \else
    \UnderFrametrue%
  \fi
  \begin{center}\GRAPHIC{#7}{#6}{#2}{#3}{\z@}\end{center}%
  \ifUnderFrame
    #4
    \ifx\QCBOptA\empty
      \caption{}%
    \else
      \ifx\QCBOptB\empty
        \caption{\QCBOptA}%
      \else
        \caption[\QCBOptB]{\QCBOptA}%
      \fi
    \fi
    \label{#5}%
  \fi
  \end{figure}%
 }%
\def\makeactives{
  \catcode`\"=\active
  \catcode`\;=\active
  \catcode`\:=\active
  \catcode`\'=\active
  \catcode`\~=\active
}
   \gdef\activesoff{%
      \def"{\string"}
      \def;{\string;}
      \def:{\string:}
      \def'{\string'}
      \def~{\string~}
    }
\def\FRAME#1#2#3#4#5#6#7#8{%
 \bgroup
 \@ifundefined{bbl@deactivate}{}{\activesoff}
 \ifnum\draft=\@ne
   \wasdrafttrue
 \else
   \wasdraftfalse%
 \fi
 \def\LaTeXparams{}%
 \dispkind=\z@
 \def\LaTeXparams{}%
 \doFRAMEparams{#1}%
 \ifnum\dispkind=\z@\IFRAME{#2}{#3}{#4}{#7}{#8}{#5}\else
  \ifnum\dispkind=\@ne\DFRAME{#2}{#3}{#7}{#8}{#5}\else
   \ifnum\dispkind=\tw@
    \edef\@tempa{\noexpand\FFRAME{\LaTeXparams}}%
    \@tempa{#2}{#3}{#5}{#6}{#7}{#8}%
    \fi
   \fi
  \fi
  \ifwasdraft\draft=1\else\draft=0\fi{}%
  \egroup
 }%
\def\TEXUX#1{"texux"}
\def\limfunc#1{\mathop{\rm #1}}%
\long\def\QQQ#1#2{%
     \long\expandafter\def\csname#1\endcsname{#2}}%
\long\def\QQA#1#2{}%
\def\QTR#1#2{{\csname#1\endcsname #2}}
\def\EXPAND#1[#2]#3{}%
\def\NOEXPAND#1[#2]#3{}%
\def\LaTeXparent#1{}%
\def\ChildStyles#1{}%
\def\ChildDefaults#1{}%
\def\QTagDef#1#2#3{}%
\def\QQfnmark#1{\footnotemark}
\def\makeatletter\input gnuindex.sty\makeatother\makeindex{\makeatletter\input gnuindex.sty\makeatother\makeindex}%
\def\initial#1{\bigbreak{\raggedright\large\bf #1}\kern 2\p@\penalty3000}}%
 \def\abstract{%
  \if@twocolumn
   \section*{Abstract (Not appropriate in this style!)}%
   \else \small 
   \begin{center}{\bf Abstract\vspace{-.5em}\vspace{\z@}}\end{center}%
   \quotation 
   \fi
  }%
   \def\registered{\relax\ifmmode{}\r@gistered
                    \else$\m@th\r@gistered$\fi}%
 \def\r@gistered{^{\ooalign
  {\hfil\raise.07ex\hbox{$\scriptstyle\rm\text{R}$}\hfil\crcr
  \mathhexbox20D}}}}{}%
\newdimen\theight
\def\Column{%
 \vadjust{\setbox\z@=\hbox{\scriptsize\quad\quad tcol}%
  \theight=\ht\z@\advance\theight by \dp\z@\advance\theight by \lineskip
  \kern -\theight \vbox to \theight{%
   \rightline{\rlap{\box\z@}}%
   \vss
   }%
  }%
 }%
\def\qed{%
 \ifhmode\unskip\nobreak\fi\ifmmode\ifinner\else\hskip5\p@\fi\fi
 \hbox{\hskip5\p@\vrule width4\p@ height6\p@ depth1.5\p@\hskip\p@}%
 }%
\def\miss{\hbox{\vrule height2\p@ width 2\p@ depth\z@}}%
\def\tcol#1{{\baselineskip=6\p@ \vcenter{#1}} \Column}  %
\def\newfmtname{LaTeX2e}
\def\chkcompat{%
   \if@compatibility
   \else
     \usepackage{latexsym}
   \fi
}
  \DeclareOldFontCommand{\rm}{\normalfont\rmfamily}{\mathrm}
  \DeclareOldFontCommand{\sf}{\normalfont\sffamily}{\mathsf}
  \DeclareOldFontCommand{\tt}{\normalfont\ttfamily}{\mathtt}
  \DeclareOldFontCommand{\bf}{\normalfont\bfseries}{\mathbf}
  \DeclareOldFontCommand{\it}{\normalfont\itshape}{\mathit}
  \DeclareOldFontCommand{\sl}{\normalfont\slshape}{\@nomath\sl}
  \DeclareOldFontCommand{\sc}{\normalfont\scshape}{\@nomath\sc}
\def\alpha{{\Greekmath 010B}}%
\def\beta{{\Greekmath 010C}}%
\def\gamma{{\Greekmath 010D}}%
\def\delta{{\Greekmath 010E}}%
\def\epsilon{{\Greekmath 010F}}%
\def\zeta{{\Greekmath 0110}}%
\def\eta{{\Greekmath 0111}}%
\def\theta{{\Greekmath 0112}}%
\def\iota{{\Greekmath 0113}}%
\def\kappa{{\Greekmath 0114}}%
\def\lambda{{\Greekmath 0115}}%
\def\mu{{\Greekmath 0116}}%
\def\nu{{\Greekmath 0117}}%
\def\xi{{\Greekmath 0118}}%
\def\pi{{\Greekmath 0119}}%
\def\rho{{\Greekmath 011A}}%
\def\sigma{{\Greekmath 011B}}%
\def\tau{{\Greekmath 011C}}%
\def\upsilon{{\Greekmath 011D}}%
\def\phi{{\Greekmath 011E}}%
\def\chi{{\Greekmath 011F}}%
\def\psi{{\Greekmath 0120}}%
\def\omega{{\Greekmath 0121}}%
\def\varepsilon{{\Greekmath 0122}}%
\def\vartheta{{\Greekmath 0123}}%
\def\varpi{{\Greekmath 0124}}%
\def\varrho{{\Greekmath 0125}}%
\def\varsigma{{\Greekmath 0126}}%
\def\varphi{{\Greekmath 0127}}%
\def\nabla{{\Greekmath 0272}}
\def\FindBoldGroup{%
   {\setbox0=\hbox{$\mathbf{x\global\edef\theboldgroup{\the\mathgroup}}$}}%
}
\def\Greekmath#1#2#3#4{%
    \if@compatibility
        \ifnum\mathgroup=\symbold
           \mathchoice{\mbox{\boldmath$\displaystyle\mathchar"#1#2#3#4$}}%
                      {\mbox{\boldmath$\textstyle\mathchar"#1#2#3#4$}}%
                      {\mbox{\boldmath$\scriptstyle\mathchar"#1#2#3#4$}}%
                      {\mbox{\boldmath$\scriptscriptstyle\mathchar"#1#2#3#4$}}%
        \else
           \mathchar"#1#2#3#4%
        \fi 
    \else 
        \FindBoldGroup
        \ifnum\mathgroup=\theboldgroup 
           \mathchoice{\mbox{\boldmath$\displaystyle\mathchar"#1#2#3#4$}}%
                      {\mbox{\boldmath$\textstyle\mathchar"#1#2#3#4$}}%
                      {\mbox{\boldmath$\scriptstyle\mathchar"#1#2#3#4$}}%
                      {\mbox{\boldmath$\scriptscriptstyle\mathchar"#1#2#3#4$}}%
        \else
           \mathchar"#1#2#3#4%
        \fi     	    
	  \fi}
\newif\ifGreekBold  \GreekBoldfalse
\let\SAVEPBF=\pbf
\def\pbf{\GreekBoldtrue\SAVEPBF}%
  \newcounter{equationnumber}  
  \def\mathletters{%
     \addtocounter{equation}{1}
     \edef\@currentlabel{\theequation}%
     \setcounter{equationnumber}{\c@equation}
     \setcounter{equation}{0}%
     \edef\theequation{\@currentlabel\noexpand\alph{equation}}%
  }
    \def\BibTeX{{\rm B\kern-.05em{\sc i\kern-.025em b}\kern-.08em
                 T\kern-.1667em\lower.7ex\hbox{E}\kern-.125emX}}}{}%
\def\AmS{{\protect\usefont{OMS}{cmsy}{m}{n}%
                A\kern-.1667em\lower.5ex\hbox{M}\kern-.125emS}}}{}%
\let\DOTSI\relax
\def\eat@#1{}%
\def\RIfM@{\relax\ifmmode}%
\def\FN@{\futurelet\next}%
\def\iint{\DOTSI\intno@\tw@\FN@\ints@}%
\def\iiint{\DOTSI\intno@\thr@@\FN@\ints@}%
\def\iiiint{\DOTSI\intno@4 \FN@\ints@}%
\def\idotsint{\DOTSI\intno@\z@\FN@\ints@}%
\def\ints@{\findlimits@\ints@@}%
\newif\iflimtoken@
\newif\iflimits@
\def\findlimits@{\limtoken@true\ifx\next\limits\limits@true
 \else\ifx\next\nolimits\limits@false\else
 \limtoken@false\ifx\ilimits@\nolimits\limits@false\else
 \ifinner\limits@false\else\limits@true\fi\fi\fi\fi}%
\def\multint@{\int\ifnum\intno@=\z@\intdots@                          
 \else\intkern@\fi                                                    
 \ifnum\intno@>\tw@\int\intkern@\fi                                   
 \ifnum\intno@>\thr@@\int\intkern@\fi                                 
 \int}
\def\multintlimits@{\intop\ifnum\intno@=\z@\intdots@\else\intkern@\fi
 \ifnum\intno@>\tw@\intop\intkern@\fi
 \ifnum\intno@>\thr@@\intop\intkern@\fi\intop}%
\def\intic@{%
    \mathchoice{\hskip.5em}{\hskip.4em}{\hskip.4em}{\hskip.4em}}%
\def\negintic@{\mathchoice
 {\hskip-.5em}{\hskip-.4em}{\hskip-.4em}{\hskip-.4em}}%
\def\ints@@{\iflimtoken@                                              
 \def\ints@@@{\iflimits@\negintic@
   \mathop{\intic@\multintlimits@}\limits                             
  \else\multint@\nolimits\fi                                          
  \eat@}
 \else                                                                
 \def\ints@@@{\iflimits@\negintic@
  \mathop{\intic@\multintlimits@}\limits\else
  \multint@\nolimits\fi}\fi\ints@@@}%
\def\intkern@{\mathchoice{\!\!\!}{\!\!}{\!\!}{\!\!}}%
\def\plaincdots@{\mathinner{\cdotp\cdotp\cdotp}}%
\def\intdots@{\mathchoice{\plaincdots@}%
 {{\cdotp}\mkern1.5mu{\cdotp}\mkern1.5mu{\cdotp}}%
 {{\cdotp}\mkern1mu{\cdotp}\mkern1mu{\cdotp}}%
 {{\cdotp}\mkern1mu{\cdotp}\mkern1mu{\cdotp}}}%
\def\RIfM@{\relax\protect\ifmmode}
\def\text{\RIfM@\expandafter\text@\else\expandafter\mbox\fi}
\let\nfss@text\text
\def\text@#1{\mathchoice
   {\textdef@\displaystyle\f@size{#1}}%
   {\textdef@\textstyle\tf@size{\firstchoice@false #1}}%
   {\textdef@\textstyle\sf@size{\firstchoice@false #1}}%
   {\textdef@\textstyle \ssf@size{\firstchoice@false #1}}%
   \glb@settings}
\def\textdef@#1#2#3{\hbox{{%
                    \everymath{#1}%
                    \let\f@size#2\selectfont
                    #3}}}
\newif\iffirstchoice@
\def\Let@{\relax\iffalse{\fi\let\\=\cr\iffalse}\fi}%
\def\vspace@{\def\vspace##1{\crcr\noalign{\vskip##1\relax}}}%
\def\multilimits@{\bgroup\vspace@\Let@
 \baselineskip\fontdimen10 \scriptfont\tw@
 \advance\baselineskip\fontdimen12 \scriptfont\tw@
 \lineskip\thr@@\fontdimen8 \scriptfont\thr@@
 \lineskiplimit\lineskip
 \vbox\bgroup\ialign\bgroup\hfil$\m@th\scriptstyle{##}$\hfil\crcr}%
\def\Sb{_\multilimits@}%
\def\endSb{\crcr\egroup\egroup\egroup}%
\def\Sp{^\multilimits@}%
\newdimen\ex@
\def\rightarrowfill@#1{$#1\m@th\mathord-\mkern-6mu\cleaders
 \hbox{$#1\mkern-2mu\mathord-\mkern-2mu$}\hfill
 \mkern-6mu\mathord\rightarrow$}%
\def\leftarrowfill@#1{$#1\m@th\mathord\leftarrow\mkern-6mu\cleaders
 \hbox{$#1\mkern-2mu\mathord-\mkern-2mu$}\hfill\mkern-6mu\mathord-$}%
\def\leftrightarrowfill@#1{$#1\m@th\mathord\leftarrow
\mkern-6mu\cleaders
 \hbox{$#1\mkern-2mu\mathord-\mkern-2mu$}\hfill
 \mkern-6mu\mathord\rightarrow$}%
\def\overrightarrow{\mathpalette\overrightarrow@}%
\def\overrightarrow@#1#2{\vbox{\ialign{##\crcr\rightarrowfill@#1\crcr
 \noalign{\kern-\ex@\nointerlineskip}$\m@th\hfil#1#2\hfil$\crcr}}}%
\def\overleftarrow{\mathpalette\overleftarrow@}%
\def\overleftarrow@#1#2{\vbox{\ialign{##\crcr\leftarrowfill@#1\crcr
 \noalign{\kern-\ex@\nointerlineskip}$\m@th\hfil#1#2\hfil$\crcr}}}%
\def\overleftrightarrow{\mathpalette\overleftrightarrow@}%
\def\overleftrightarrow@#1#2{\vbox{\ialign{##\crcr
   \leftrightarrowfill@#1\crcr
 \noalign{\kern-\ex@\nointerlineskip}$\m@th\hfil#1#2\hfil$\crcr}}}%
\def\underrightarrow{\mathpalette\underrightarrow@}%
\def\underrightarrow@#1#2{\vtop{\ialign{##\crcr$\m@th\hfil#1#2\hfil
  $\crcr\noalign{\nointerlineskip}\rightarrowfill@#1\crcr}}}%
\def\underleftarrow{\mathpalette\underleftarrow@}%
\def\underleftarrow@#1#2{\vtop{\ialign{##\crcr$\m@th\hfil#1#2\hfil
  $\crcr\noalign{\nointerlineskip}\leftarrowfill@#1\crcr}}}%
\def\underleftrightarrow{\mathpalette\underleftrightarrow@}%
\def\underleftrightarrow@#1#2{\vtop{\ialign{##\crcr$\m@th
  \hfil#1#2\hfil$\crcr
 \noalign{\nointerlineskip}\leftrightarrowfill@#1\crcr}}}%
\def\qopnamewl@#1{\mathop{\operator@font#1}\nlimits@}
\let\nlimits@\displaylimits
\def\setboxz@h{\setbox\z@\hbox}
\def\varlim@#1#2{\mathop{\vtop{\ialign{##\crcr
 \hfil$#1\m@th\operator@font lim$\hfil\crcr
 \noalign{\nointerlineskip}#2#1\crcr
 \noalign{\nointerlineskip\kern-\ex@}\crcr}}}}
 \def\rightarrowfill@#1{\m@th\setboxz@h{$#1-$}\ht\z@\z@
  $#1\copy\z@\mkern-6mu\cleaders
  \hbox{$#1\mkern-2mu\box\z@\mkern-2mu$}\hfill
  \mkern-6mu\mathord\rightarrow$}
\def\leftarrowfill@#1{\m@th\setboxz@h{$#1-$}\ht\z@\z@
  $#1\mathord\leftarrow\mkern-6mu\cleaders
  \hbox{$#1\mkern-2mu\copy\z@\mkern-2mu$}\hfill
  \mkern-6mu\box\z@$}
\def\projlim{\qopnamewl@{proj\,lim}}
\def\injlim{\qopnamewl@{inj\,lim}}
\def\varinjlim{\mathpalette\varlim@\rightarrowfill@}
\def\varprojlim{\mathpalette\varlim@\leftarrowfill@}
\def\varliminf{\mathpalette\varliminf@{}}
\def\varliminf@#1{\mathop{\underline{\vrule\@depth.2\ex@\@width\z@
   \hbox{$#1\m@th\operator@font lim$}}}}
\def\varlimsup{\mathpalette\varlimsup@{}}
\def\varlimsup@#1{\mathop{\overline
  {\hbox{$#1\m@th\operator@font lim$}}}}
\def\align{\@verbatim \frenchspacing\@vobeyspaces \@alignverbatim
You are using the "align" environment in a style in which it is not defined.}
\let\csname endalign*\endcsname =\endtrivlist
\def\alignat{\@verbatim \frenchspacing\@vobeyspaces \@alignatverbatim
You are using the "alignat" environment in a style in which it is not defined.}
\let\csname endalignat*\endcsname =\endtrivlist
\def\xalignat{\@verbatim \frenchspacing\@vobeyspaces \@xalignatverbatim
You are using the "xalignat" environment in a style in which it is not defined.}
\let\csname endxalignat*\endcsname =\endtrivlist
\def\gather{\@verbatim \frenchspacing\@vobeyspaces \@gatherverbatim
You are using the "gather" environment in a style in which it is not defined.}
\let\csname endgather*\endcsname =\endtrivlist
\def\multiline{\@verbatim \frenchspacing\@vobeyspaces \@multilineverbatim
You are using the "multiline" environment in a style in which it is not defined.}
\let\csname endmultiline*\endcsname =\endtrivlist
\def\arrax{\@verbatim \frenchspacing\@vobeyspaces \@arraxverbatim
You are using a type of "array" construct that is only allowed in AmS-LaTeX.}
\def\tabulax{\@verbatim \frenchspacing\@vobeyspaces \@tabulaxverbatim
You are using a type of "tabular" construct that is only allowed in AmS-LaTeX.}
\let\csname endarrax*\endcsname =\endtrivlist
\let\csname endtabulax*\endcsname =\endtrivlist
\def\@@eqncr{\let\@tempa\relax
    \ifcase\@eqcnt \def\@tempa{& & &}\or \def\@tempa{& &}%
      \else \def\@tempa{&}\fi
     \@tempa
     \if@eqnsw
        \iftag@
           \@taggnum
        \else
           \@eqnnum\stepcounter{equation}%
        \fi
     \fi
     \global\tag@false
     \global\@eqnswtrue
     \global\@eqcnt\z@\cr}
 \def\endequation{%
     \ifmmode\ifinner 
      \iftag@
        \addtocounter{equation}{-1} 
        $\hfil
           \displaywidth\linewidth\@taggnum\egroup \endtrivlist
        \global\tag@false
        \global\@ignoretrue   
      \else
        $\hfil
           \displaywidth\linewidth\@eqnnum\egroup \endtrivlist
        \global\tag@false
        \global\@ignoretrue 
      \fi
     \else   
      \iftag@
        \addtocounter{equation}{-1} 
        \eqno \hbox{\@taggnum}
        \global\tag@false%
        $$\global\@ignoretrue
      \else
        \eqno \hbox{\@eqnnum}
        $$\global\@ignoretrue
      \fi
     \fi\fi
 } 
 \newif\iftag@ \tag@false
 \def\tag{\@ifnextchar*{\@tagstar}{\@tag}}
 \def\@tag#1{%
     \global\tag@true
     \global\def\@taggnum{(#1)}}
 \def\@tagstar*#1{%
     \global\tag@true
     \global\def\@taggnum{#1}%
}
\theoremstyle{definition}
\theoremstyle{remark}
\numberwithin{equation}{section}
\begin{document}
\title[Differentiability of quasiconvex functions]{Differentiability of quasiconvex functions on separable Banach spaces}
\author{Patrick J. Rabier}
\address{Department of mathematics, University of Pittsburgh, Pittsburgh, PA 15260}
\email{rabier@imap.pitt.edu}
\subjclass{}
\keywords{}
\maketitle

\begin{abstract}
We investigate the differentiability properties of a real-valued quasiconvex
function $f$ defined on a separable Banach space $X.$ Continuity is only
assumed to hold at the points of a dense subset. If so, this subset is
automatically residual.

Sample results that can be quoted without involving any new concept or
nomenclature are as follows: (i) If $f$ is usc or strictly quasiconvex, then 
$f$ is Hadamard differentiable at the points of a dense subset of $X.$ (ii)
If $f$ is even, then $f$ is continuous and G\^{a}teaux differentiable at the
points of a dense subset of $X.$ In (i) or (ii), the dense subset need not
be residual but, if $X$ is also reflexive, it contains the complement of a
Haar null set. Furthermore, (ii) remains true without the evenness
requirement if the definition of G\^{a}teaux differentiability is
generalized in an unusual, but ultimately natural, way.

The full results are much more general and substantially stronger. In
particular, they incorporate the well known theorem of Crouzeix, to the
effect that every real-valued quasiconvex function on $\Bbb{R}^{N}$ is
Fr\'{e}chet differentiable a.e.
\end{abstract}

\section{Introduction\label{intro}}

According to Rockafellar \cite[p. 428]{Ro70}, it has been known since the
early 20th century that a real-valued convex function on $\Bbb{R}^{N}$ is
everywhere continuous and a.e. Fr\'{e}chet differentiable, although the
geometric leanings prevalent at that time make it difficult to pinpoint the
origin of this result with greater accuracy.

When $\Bbb{R}^{N}$ is replaced by an infinite dimensional Banach space $X,$
a real-valued convex function on $X$ is either continuous at every point or
discontinuous at every point. This makes it obvious that the investigation
of the differentiability properties of convex functions should be limited to
the former class. When $X$ is separable, Mazur \cite{Ma33} proved the
residual G\^{a}teaux differentiability of continuous convex functions in
1933. Recall that, in Baire category terminology, a residual subset is the
complement of a set of first category. Much later, in 1976, Aronszajn \cite
{Ar76} showed that G\^{a}teaux differentiability is also true almost
everywhere, provided that a suitable generalization of null sets is used
when $\dim X=\infty $ (Aronszajn null sets; see Subsection \ref{aronszajn}).

In the theorems of Mazur and Aronszajn, G\^{a}teaux differentiability can be
replaced by Hadamard differentiability -and hence by Fr\'{e}chet
differentiability when $X=\Bbb{R}^{N}$- since both concepts coincide for
locally Lipschitz functions (this is well known \cite[p. 19]{Ya74} and
elementary). More generally, the residual Fr\'{e}chet differentiability of
continuous convex functions was proved in a 1968 landmark paper by Asplund 
\cite{As68} when $X^{*}$ is separable, as well as in other cases that do not
require or imply the separability of $X.$

The dichotomy everywhere/nowhere continuous is no longer true for finite
quasiconvex functions, where the quasiconvexity of $f$ is understood as $%
f(\lambda x+(1-\lambda )y)\leq \max \{f(x),f(y)\}$ for every $x,y$ and every 
$\lambda \in [0,1].$ Nonetheless, in 1981, Crouzeix \cite{Cr81} (see also 
\cite{ChCr87}) proved that, just like convex functions, real-valued
quasiconvex functions on $\Bbb{R}^{N},$ not necessarily continuous, are a.e.
Fr\'{e}chet differentiable. To date, this property has not been extended in
any form to infinite dimensional spaces. In fact, with only a few notable
exceptions, the literature has consistently focused on differentiability
under local Lipschitz continuity conditions, a topic with a long history
which is still the subject of active research; see the recent text \cite
{LiPrTi12}. Of course, local Lipschitz continuity is grossly inadequate to
handle quasiconvex functions, even continuous ones, irrespective of $\dim X.$

It is the purpose of this paper to show how Crouzeix's theorem can be
generalized when $\Bbb{R}^{N}$ is replaced by a separable Banach space $X.$
We provide variants of the theorems of Mazur and Aronszajn in the more
general setting of quasiconvex functions, although neither theorem has a
genuine generalization. In particular, ``mixed'' criteria must be used to
evaluate the size of the set of points of differentiability. We shall return
to this and related issues further below.

As a simple first step, recall that the set of points of discontinuity of
any real-valued function $f$ on a topological space $X$ is an $\mathcal{F}%
_{\sigma }$ (see e.g. \cite[p.78]{HeSt65}, \cite[p. 58]{Ru74}), so that
either this set is of first category, or it has nonempty interior\footnote{%
It is only in Baire spaces (e.g., Banach spaces) that these two options are
mutually exclusive.}. Evidently, no generic differentiability result should
be expected in the latter case, which dictates confining attention to
quasiconvex functions that are continuous at the points of a residual subset
of the (separable) Banach space $X.$ Incidentally, Mazur's theorem breaks
down for this class of functions, even when $X=\Bbb{R}$ and continuity is
assumed. Indeed, in general, a monotone continuous function on $\Bbb{R}$ is
only differentiable at the points of a set of first category (%
\cite[Corollary 1]{Za84}), even though its complement has measure $0$ by
Lebesgue's theorem. As we shall see later, Aronszajn's theorem also fails in
the quasiconvex (even continuous) case when $\dim X=\infty .$

In Banach spaces, residual sets are dense and the converse is trivially true
for $\mathcal{G}_{\delta }$ sets. In particular, a real-valued function is
continuous at the points of a residual subset if and only if it is \textit{\
densely continuous,} i.e.,\textit{\ } continuous at the points of a dense
subset. Common examples of densely continuous functions include the upper or
lower semicontinuous functions (\cite[Lemma 2.1]{Na72}) and, by a theorem
that goes back to Baire himself, the so-called functions of Baire first
class, i.e., the pointwise limits of sequences of continuous functions (%
\cite[p. 12]{Yo80}). An apparently new class (ideally quasiconvex functions)
that contains all the semicontinuous quasiconvex functions -and even all the
quasiconvex functions when $X=\Bbb{R}^{N}$- is described in Subsection \ref
{ideally}.

As shown by the author in \cite{Ra12}, when $X$ is a Baire topological
vector space (tvs), the densely continuous quasiconvex functions $f$ on $X$
have quite simple equivalent characterizations in terms of the lower level
sets 
\begin{equation}
F_{\alpha }:=\{x\in X:f(x)<\alpha \}\text{ and }F_{\alpha }^{\prime
}:=\{x\in X:f(x)\leq \alpha \},  \label{1}
\end{equation}
where $\alpha \in \Bbb{R}\cup \{-\infty \}.$ Of course, since $f$ is
real-valued, $F_{-\infty }=F_{-\infty }^{\prime }=\emptyset ,$ but the
notation will occasionally be convenient. By the quasiconvexity of $f,$ all
the sets $F_{\alpha }$ and $F_{\alpha }^{\prime }$ are convex and,
conversely, if all the $F_{\alpha }$ (or $F_{\alpha }^{\prime }$) are
convex, then $f$ is quasiconvex, which is de Finetti's original definition 
\cite{Fi49}.

In \cite{Ra12} and again in this paper, a special value plays a crucial
role, which is the so-called \textit{\ topological essential infimum }$%
\mathcal{T}\limfunc{ess}\inf_{X}f$ of $f,$ denoted by $m$ for simplicity,
defined by 
\begin{multline}
m:=\mathcal{T}\limfunc{ess}\inf_{X}f:=\sup \{\alpha \in \Bbb{R}:F_{\alpha }%
\text{ is of first category}\}  \label{2} \\
=\inf \{\alpha \in \Bbb{R}:F_{\alpha }\text{ is of second category}\}.
\end{multline}
The last equality in (\ref{2}) follows from the fact that the sets $%
F_{\alpha }$ are linearly ordered by inclusion. Since $f$ is real-valued and 
$X$ is a Baire space, it is always true that $\mathcal{T}\limfunc{ess}
\inf_{X}f\in [-\infty ,\infty ).$ Note also that $\inf_{X}f\leq \mathcal{T}%
\limfunc{ess}\inf_{X}f$ and that the set $F_{m}$ (but not $F_{m}^{\prime }$)
is \emph{always} of first category since it is the union of countably many $%
F_{\alpha }$ with $\alpha <m.$

From now on, $X$ is a separable Banach space and $f:X\rightarrow \Bbb{R}$ is
quasiconvex and densely continuous. Below, we give a synopsis of the
differentiability results proved in this paper. The rough principle is that
the size of the subset $F_{m}$ ranks the amount of differentiability of $f$
(the smaller the better). Even though $F_{m}$ is always of first category,
various refinements will be involved. Many of the concepts and some
technical results needed for the proofs are reviewed or introduced in the
next two sections. In particular, several properties of $F_{m}^{\prime }$
with direct relevance to the differentiability question are established in
Section \ref{residually}.

In Section \ref{above}, we focus on the differentiability of $f$ above level 
$m,$ that is, at the points of $X\backslash F_{m}^{\prime }.$ The special
features of $F_{m}^{\prime }$ make it possible to rely on a theorem of
Borwein and Wang \cite{BoWa05} to prove that $f$ is Hadamard differentiable
on $X\backslash F_{m}^{\prime }$ except at the points of an Aronszajn null
set (Theorem \ref{th9}). If $m=-\infty ,$ so that $X\backslash F_{m}^{\prime
}=X,$ this fully generalizes Aronszajn's theorem and settles the
differentiability issue.

Accordingly, in the remainder of the discussion, $m>-\infty $ is assumed. It
turns out that $X\backslash F_{m}^{\prime }$ is always semi-open, i.e.,
contained in the closure of its interior (which is not true when $%
F_{m}^{\prime }$ is replaced by an arbitrary convex set), so that the
differentiability result just mentioned above still shows that $f$ is
Hadamard differentiable at ``most'' points of $X\backslash F_{m}^{\prime }.$
That $X\backslash F_{m}^{\prime }$ may be empty does not invalidate this
statement. Hence, it only remains to investigate differentiability at the
points of $F_{m}^{\prime }=F_{m}\cup f^{-1}(m).$

It turns out that $f$ is never G\^{a}teaux differentiable at any point of $%
F_{m}$ (Theorem \ref{th11}). Thus, the differentiability of $f$ on $%
F_{m}^{\prime }$ depends solely upon its differentiability at the points of $%
f^{-1}(m)$ and, if $f$ is G\^{a}teaux differentiable at $x\in f^{-1}(m),$
then $Df(x)=0$ (Theorem \ref{th11}), as if $m$ were a genuine minimum. In
summary, the problem is to evaluate the size of those points of $f^{-1}(m)$
at which the directional derivative of $f$ exists and is $0$ in all
directions and to decide whether $f$ is Hadamard differentiable at such
points. While this demonstrates the importance of $m$ in the
differentiability question, the task is not as simple as one might first
hope.

When $X=\Bbb{R}^{N},$ $F_{m}$ is not only of first category but also nowhere
dense because, in finite dimension, \emph{convex} subsets of first category
are nowhere dense. Differentiability when $X$ is separable and $F_{m}$ is
nowhere dense is the topic of Section \ref{differentiability1}. The
technicalities depend upon whether $F_{m}^{\prime }$ is of first or second
category but, ultimately, we show that $f$ is Hadamard -but not Fr\'{e}chet-
differentiable on the complement of the union of a \emph{nowhere dense} set
with an Aronszajn null set (Theorem \ref{th14}). Such unions cannot be
replaced with Aronszajn null sets alone or with sets of first category alone.

Every subset which is the union of a nowhere dense set with an Aronszajn
null set has empty interior (of course, this is false if ``nowhere dense''
is replaced by ``first category'') and the class of such subsets is an
ideal, but not a $\sigma $-ideal. In particular, $f$ above -or any finite
collection of similar functions- is Hadamard differentiable at the points of
a dense subset of $X.$ This is only a ``subgeneric'' differentiability
property but, if $X$ is separable and reflexive, a truly generic result
holds: If $F_{m}$ is nowhere dense, $f$ is Hadamard differentiable on $X$
except at the points of a Haar null set (Theorem \ref{th15}). This should be
related to the rather unexpected fact that, in such spaces, a quasiconvex
function is densely continuous if and only if its set of points of
discontinuity is Haar null (Theorem \ref{th6}).

Examples show that Theorem \ref{th15} is not true for Fr\'{e}chet
differentiability, or if $X$ is not reflexive, or if ``Haar null'' is
replaced by ``Aronszajn null'', even if $f$ is continuous and $X=\ell ^{2}.$
Thus, Aronszajn's theorem cannot be extended to (densely) continuous
quasiconvex functions if $\dim X=\infty .$ Nonetheless, when $X=\Bbb{R}^{N},$
Crouzeix's theorem is recovered because every quasiconvex function on $\Bbb{R%
}^{N}$ is densely continuous, the Haar null sets of $\Bbb{R}^{N}$ are just
the Borel subsets of Lebesgue measure $0$ and Hadamard and Fr\'{e}chet
differentiability coincide on $\Bbb{R}^{N}.$

Aside from $X=\Bbb{R}^{N},$ there are several conditions ensuring that $%
F_{m} $ is nowhere dense. In particular, if $f$ is upper semicontinuous
(usc) or, more generally, if $\inf_{X}f=m,$ for then $F_{m}=\emptyset .$
Also, $F_{m}$ is nowhere dense if $f$ is strongly ideally quasiconvex
(Definition \ref{def1}, Theorem \ref{th7}) or strictly quasiconvex and
densely continuous (Corollary \ref{cor16}).

In general, $F_{m}$ need not be nowhere dense if $\dim X=\infty ,$
especially when $f$ is lower semicontinuous (lsc). Thus, loosely speaking,
the differentiability issue is more delicate for lsc functions than for usc
ones. However, no difficulty arises as a result of passing to the lsc hull,
even though doing so can only enlarge $F_{m}:$ In Subsection \ref{lsc-hulls}%
, we show that Theorems \ref{th14} and \ref{th15} are applicable to a
densely continuous quasiconvex function $f$ if and only if they are
applicable to its lsc hull.

The case when $F_{m}$ is not nowhere dense is discussed in Section \ref
{differentiability2}. This is new territory since it never happens when $X=%
\Bbb{R}^{N}$ or when $f$ is usc, let alone convex and continuous. We show
that a subset $F_{m}^{\ddagger }$ of $X,$ often much smaller than $\overline{%
F}_{m},$ is intimately related to the G\^{a}teaux differentiability
question. Specifically, $F_{m}^{\ddagger }$ consists of all the limits of
the convergent sequences of points of $F_{m}$ that remain in some finite
dimensional subspace of $X.$

In a more arcane terminology, $F_{m}^{\ddagger }$ is the sequential closure
of $F_{m}$ for the finest locally convex topology on $X.$ This type of
closure has been around for quite a while in the literature, but only in
connection with issues far removed from differentiability (topology, moment
problem in real algebraic geometry, etc.) and often in a setting that rules
out infinite dimensional Banach spaces (countable dimension).

In Theorem \ref{th20}, we prove that if $F_{m}^{\ddagger }$ has empty
interior, then $f$ is both continuous and G\^{a}teaux differentiable at the
points of a dense subset of $X.$ Since Hadamard differentiability implies
continuity, this is weaker than the analogous result when $F_{m}$ is nowhere
dense (Theorem \ref{th14}), but applicable in greater generality.

Corollaries are given in which the assumption that $F_{m}^{\ddagger }$ has
empty interior is replaced by a more readily verifiable condition. Most
notably, we show in Corollary \ref{cor23} that every \emph{even} densely
continuous quasiconvex function $f$ on a separable Banach space is
continuous and G\^{a}teaux differentiable at the points of a dense subset of 
$X.$ Furthermore, if $X$ is also reflexive, $f$ is continuous and
G\^{a}teaux differentiable on $X$ except at the points of a Haar null set.
These properties remain true when $f$ exhibits more general symmetries
(Corollary \ref{cor24}).

Section \ref{example} is devoted to an example showing that when $F_{m}$ is
not nowhere dense, the differentiability properties of $f$ at the points of $%
F_{m}^{\prime }$ or, equivalently, $f^{-1}(m),$ are indeed significantly
weaker than when $F_{m}$ is nowhere dense. This confirms that an optimal
outcome cannot be obtained without splitting the investigation of the two
cases.

The results of Section \ref{differentiability2}, particularly those
incorporating some symmetry assumption about $f,$ make it legitimate to ask
whether $F_{m}^{\ddagger }$ always has empty interior. If true, this would
imply that Theorem \ref{th20} is always applicable. In Section \ref{peculiar}%
, we put an early end to this speculation, by exhibiting a convex subset $C$
of $\ell ^{2}$ of first category such that $C^{\ddagger }=\ell ^{2}.$ The
construction of $C$ also shows how to produce densely continuous quasiconvex
functions $f$ such that $m>-\infty $ and $F_{m}=C,$ but we were unable to
determine whether any such function fails to be G\^{a}teaux differentiable
at the points of a dense subset.

The aftermath of $F_{m}^{\ddagger }$ not always having empty interior is
that, when $\dim X=\infty ,$ it remains an open question whether \emph{every}
densely continuous quasiconvex function $f$ on a separable Banach space $X$
is continuous and G\^{a}teaux differentiable at the points of a dense
subset. Nevertheless, in Section \ref{essential}, this question is answered
in the affirmative after the definition of G\^{a}teaux differentiability is
slightly altered.

A basic remark is that the concept of G\^{a}teaux derivative at a point $x$
continues to make sense if it is only required that the directional
derivatives at $x$ exist and are represented by a continuous linear form for
some residual set of directions in the unit sphere. This suffices to define
an ``essential'' G\^{a}teaux derivative at $x$ in a unique way, independent
of the residual set of directions (which is not true if ``residual'' is
replaced by ``dense''). With this extended definition, the problem is
resolved in Theorem \ref{th29}. In particular, if $X$ is reflexive and
separable, every densely continuous quasiconvex function is continuous and
essentially G\^{a}teaux differentiable on $X$ except at the points of a Haar
null set.

\section{Preliminaries\label{preliminaries}}

We collect various definitions and related results that will be used in the
sequel. Most, but not all, of them are known, some more widely than others.
Whenever possible, references rather than proofs are given.

\subsection{Aronszajn null sets\label{aronszajn}}

Let $X$ denote a separable Banach space. If $(\xi _{n})\subset X$ is any
sequence, call $\frak{A}((\xi _{n}))$ the class of all Borel subsets $%
E\subset X$ such that $E=\cup _{n}E_{n}$ where $E_{n}$ is a Borel ``null set
on every line parallel to $\xi _{n}$'', that is, $\lambda _{1}((x+\Bbb{R}\xi
_{n})$ $\cap E_{n})=0$ for every $x\in X,$ where $\lambda _{1}$ is the
one-dimensional Lebesgue measure. The \textit{Aronszajn null} sets (\cite
{Ar76}) are the (Borel) subsets $E$ of $X$ such that $E\in \frak{A}((\xi
_{n}))$ for every sequence $(\xi _{n})$ such that $\overline{\limfunc{span}
\{(\xi _{n})\}}=X.$

The Aronszajn null sets form a $\sigma $-ring that does not contain any
nonempty open subset and every Borel subset of an Aronszajn null set is
Aronszajn null. When $X=\Bbb{R}^{N},$ they are the Borel subsets of Lebesgue
measure $0.$ It was shown by Cs\"{o}rnyei \cite{Cs99} that the Aronszajn
null sets coincide with the \textit{Gaussian null} sets (Phelps \cite{Ph78};
these are the Borel subsets of $X$ that are null for every Gaussian\footnote{%
For every $\ell \in X^{*},$ the measure $\mu _{\ell }$ on $\Bbb{R}$ defined
by $\mu _{\ell }(S):=\mu (\ell ^{-1}(S))$ has a Gaussian distribution.}
probability measure on $X$) and also with the ``cube null'' sets of
Mankiewicz \cite{Ma73}.

Aronszajn null sets are preserved by affine diffeomorphisms of $X,$ i.e.,
translations and bounded invertible linear transformations. As noted by
Matou\u {s}kov\'{a} \cite{Ma97}, if $C$ is a convex subset of $X$ with
nonempty interior, the boundary $\partial C$ is Aronszajn null, but this
need not be true if $\overset{\circ }{C}=\emptyset ,$ even if $C$ is closed
and $X$ is Hilbert.

\subsection{Haar null sets\label{haar}}

Let once again $X$ denote a separable Banach space. The Borel subset $%
E\subset X$ is said to be \textit{Haar null} if there is a Borel probability
measure $\mu $ on $X$ such that $\mu (x+E)=0$ for every $x\in X.$ This
definition is due to Christensen \cite{Ch72}, \cite{Ch74}; see also \cite
{Ma98}. It is obvious that every Aronszajn null set is Haar null, but the
converse is false \cite{Ph78}, except when $X=\Bbb{R}^{N}.$ The Haar null
sets also form a $\sigma $-ring (though this is not obvious from just the
definition) containing no nonempty open subset, every Borel subset of a Haar
null set is Haar null and, just like Aronszajn null sets, they are preserved
by affine diffeomorphisms.

In practice, it is not essential that $\mu $ above be a probability measure.
The definition is unchanged if $\mu $ is a Borel measure having the same
null sets as a probability measure. For example, this happens if $\mu
(E):=\lambda _{1}(\Bbb{R}\xi \cap E)$ where $\xi \in X\backslash \{0\}$ and $%
\lambda _{1}$ is the Lebesgue measure on $\Bbb{R}\xi $ (change $\lambda _{1}$
into $\varphi \lambda _{1}$ with $\varphi (t):=\pi ^{-\frac{1}{2}}e^{-t^{2}}$
).

If, in addition, $X$ is reflexive, every closed convex subset of $X$ with
empty interior is Haar null (Matou\u {s}kov\'{a} \cite{Ma01}) but, as noted
in the previous subsection, not necessarily Aronszajn null.

\subsection{G\^{a}teaux and Hadamard differentiability\label%
{GHdifferentiability}}

A real-valued function $f$ on an open subset $V$ of a Banach space $X$ is
G\^{a}teaux differentiable at $x\in V$ if there is $l_{x}\in X^{*}$ such
that $l_{x}(h)=\lim_{t\rightarrow 0}\frac{f(x+th)-f(x)}{t}$ for every $h\in
X.$ \ If so, $l_{x}$ is denoted by $Df(x).$ If, in addition, the limit is
uniform for $h$ in every compact subset of $X,$ then $f$ is said to be
Hadamard differentiable at $x.$ Equivalently, $f$ is Hadamard differentiable
at $x$ if and only if $Df(x)h=\lim_{t_{n}\rightarrow 0}\frac{
f(x+t_{n}h_{n})-f(x)}{t_{n}}$ for every sequence $(t_{n})\subset \Bbb{R}%
\backslash \{0\}$ tending to $0$ and every sequence $(h_{n})\subset X$
tending to $h$ in $X.$ If $f$ is Hadamard differentiable at $x,$ then it is
continuous at $x.$ This is folklore (see e.g. \cite{AvSm68}) and elementary.
Of course, G\^{a}teaux differentiability alone does not ensure continuity.
When $X=\Bbb{R}^{N},$ it is plain from the first definition that Hadamard
differentiability and Fr\'{e}chet differentiability coincide.

\subsection{Cone monotone functions\label{monotone}}

Let $X$ be a Banach space and $K\subset X$ be a closed convex cone with
nonempty interior. If $V\subset X$ is a nonempty open subset, a function $%
f:V\rightarrow \Bbb{R}$ is said to be $K$\emph{-nondecreasing} if $x\in
V,k\in K\backslash \{0\}$ and $x+k\in V$ imply $f(x)\leq f(x+k).$ This
concept has long proved adequate to extend Lebesgue's theorem on
differentiation of monotone functions. It goes back (in $\Bbb{R}^{2}$) to
the 1937 edition of the book by Saks \cite{Sa64} and resurfaced in the work
of Chabrillac and Crouzeix \cite{ChCr87}. Its use in separable Banach
spaces, by Borwein \textit{et al.} \cite{BoBuLe03} and Borwein and Wang \cite
{BoWa05}, is more recent. The following result is essentially 
\cite[Theorem 18]{BoWa05}.

\begin{theorem}
\label{th1}\emph{(Borwein-Wang)}. Let $X$ be a separable Banach space, $%
K\subset X$ be a closed convex cone with $\overset{\circ }{K}\neq \emptyset $
and $V\subset X$ be a nonempty open subset. Suppose that $f:V\rightarrow 
\Bbb{R}$ is $K$-nondecreasing. Then $f$ is Hadamard differentiable on $V$
except at the points of an Aronszajn null set.
\end{theorem}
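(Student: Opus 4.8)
The plan is to imitate the one–dimensional Lebesgue theorem on differentiation of monotone functions, producing the exceptional set by the Aronszajn bookkeeping recalled in \S\ref{aronszajn}. Fix once and for all $k_{0}\in \overset{\circ }{K}$ and $\rho >0$ with $\overline{B}(k_{0},\rho )\subset K$. The solidity of $K$ yields a two–sided control: for $x\in V$, $\left\Vert h\right\Vert <\rho $ and $t>0$ small, the memberships $t(k_{0}+h)\in K$ and $t(k_{0}-h)\in K$ give the sandwich $f(x-tk_{0})\leq f(x+th)\leq f(x+tk_{0})$. Since $t\mapsto f(x+tk_{0})$ is nondecreasing, it is locally bounded, whence $f$ is locally bounded on $V$. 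I stress at the outset that $f$ need \emph{not} be continuous (a solid cone permits genuine jumps), so Aronszajn's classical theorem for Lipschitz functions is \emph{not} directly available; the monotone sandwich will have to play the role of the missing Lipschitz estimate. Because $f$ is $K$-nondecreasing, $t\mapsto f(x+tk)$ is nondecreasing for every $k\in \overset{\circ }{K}$, hence differentiable for almost every $t$.

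Next I would handle one direction at a time. Choose a sequence $(k_{n})\subset \overset{\circ }{K}$ with $\overline{\limfunc{span}\{k_{n}\}}=X$, possible since $\overset{\circ }{K}$ is a nonempty open set and the span of a nonempty open set is $X$. For each $n$, the directional derivative $f^{\prime }(x;k_{n}):=\lim_{t\rightarrow 0}t^{-1}(f(x+tk_{n})-f(x))$ exists for $\lambda _{1}$-almost every point of each line parallel to $k_{n}$, by the preceding remark. Thus the (Borel) set $A_{n}:=\{x\in V:f^{\prime }(x;k_{n})$ fails to exist$\}$ is null on every line parallel to $k_{n}$, so $A_{n}\in \frak{A}((k_{n}))$ and therefore $A:=\cup _{n}A_{n}\in \frak{A}((k_{n}))$. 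Off $A$, all the derivatives $f^{\prime }(x;k_{n})$ exist simultaneously.

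The remaining, and hardest, step is to convert this into genuine Hadamard differentiability off an \emph{Aronszajn null} set. Two things must be extracted from the sandwich. First, off a further exceptional set the map $h\mapsto f^{\prime }(x;h)$ must be shown to be finite, additive, positively homogeneous and bounded, hence an element $Df(x)\in X^{\ast }$; here the inequalities of the first paragraph, refined to $f(x+t(h-\varepsilon k_{0}))\leq f(x+th_{m})\leq f(x+t(h+\varepsilon k_{0}))$ whenever $\left\Vert h_{m}-h\right\Vert <\varepsilon \rho $, control the difference quotients uniformly over compact sets of directions. Once $Df(x)$ is linear and continuous, letting $t\rightarrow 0$ and then $\varepsilon \rightarrow 0$ in the same sandwich squeezes $t^{-1}(f(x+th_{m})-f(x))$ between $f^{\prime }(x;h-\varepsilon k_{0})$ and $f^{\prime }(x;h+\varepsilon k_{0})$, which is precisely the Hadamard condition; so Hadamard differentiability comes for free from the Gateaux derivative via the cone, with no Lipschitz hypothesis. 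Second, and this is where I expect the real difficulty, the linearity of $h\mapsto f^{\prime }(x;h)$ is exactly what upgrades membership in $\frak{A}((k_{n}))$ to membership in $\frak{A}((\eta _{n}))$ for an \emph{arbitrary} spanning sequence $(\eta _{n})$: the derivative along any $\eta $ is recovered from the derivatives along the $k_{n}$ directions, allowing the ``null on lines parallel to $k_{n}$'' property to be transferred to every other direction. This direction–transfer argument, powered by the additivity of the directional derivative together with the solid–cone sandwich that replaces the usual Lipschitz bounds, is the crux; establishing it rigorously is what makes the exceptional set truly Aronszajn null rather than merely a member of $\frak{A}((k_{n}))$, and it is the step I would budget the most care for.
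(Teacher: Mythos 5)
There is a genuine gap, and you have in fact named it yourself without closing it. The final paragraph of your proposal --- establishing that $h\mapsto f^{\prime }(x;h)$ is additive, positively homogeneous and continuous off a suitable exceptional set, and then transferring the ``null on every line parallel to $k_{n}$'' property to an \emph{arbitrary} spanning sequence $(\eta _{n})$ so that the exceptional set lands in $\frak{A}((\eta _{n}))$ for every such sequence --- is not a step you would ``budget the most care for''; it \emph{is} the theorem. Everything before it (the sandwich $f(x-tk_{0})\leq f(x+th)\leq f(x+tk_{0})$, local boundedness, a.e.\ differentiability along each fixed line by Lebesgue's theorem, the union $A=\cup _{n}A_{n}\in \frak{A}((k_{n}))$) is the routine part, and your sketch stops exactly where the real work begins. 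As written, the proposal establishes only that the directional derivatives along one chosen spanning sequence exist off a set in $\frak{A}((k_{n}))$, which is strictly weaker than Aronszajn nullity and does not yet produce a G\^{a}teaux (let alone Hadamard) derivative. A secondary issue: you assert that $A_{n}$ is Borel, but a $K$-nondecreasing function need not be Borel measurable a priori; measurability has to be addressed before the Aronszajn bookkeeping can even start.

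For comparison, the paper does not reprove the hard core at all: it cites \cite[Theorem 18]{BoWa05} for the case $V=X$ and supplies only the reduction of the general case to it, namely (i) the same local boundedness argument via the solid cone, (ii) a Lindel\"{o}f covering of $V$ by countably many sets $U_{x}$ on which $f$ is bounded, using that a countable union of Aronszajn null sets is Aronszajn null, and (iii) an explicit bounded $K$-nondecreasing extension $\widetilde{f}(x)=\sup \{f(y):y\in V,\ y\leq _{K}x\}$ on $V+K$ (and $\widetilde{f}=a$ off $V+K$) to which the $V=X$ case applies. If your intent is to give a self-contained proof rather than a reduction, you must actually carry out the additivity and direction-transfer argument (or cite Borwein--Wang for it); if your intent is to match the paper, you are missing the extension step (ii)--(iii) that handles $V\neq X$.
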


In \cite{BoWa05}, Theorem \ref{th1} is proved only when $V=X$ but, as shown
below, it is not difficult to obtain the general case as a corollary. If $%
x,y\in X,$ we use the notation $y\leq _{K}x$ if $x-y\in K$ and $y<_{K}x$ if $%
x-y\in K\backslash \{0\}.$ These relations are obviously transitive since $K$
is stable under addition (but $\leq _{K}$ is an ordering if and only if $%
K\cap (-K)=\{0\};$ this is unimportant).

First, $f$ in Theorem \ref{th1} is locally bounded (above and below) on $V.$
To see this, let $x\in V$ be given and choose $k\in \overset{\circ }{K}.$ In
particular, $k\neq 0.$ After replacing $k$ by $tk$ for $t>0$ small enough,
which does not affect $k\in \overset{\circ }{K},$ it follows from the
openness of $V$ that we may assume that $x-k\in V$ and $x+k\in V.$ If $z\in
X $ and $||z-x||<\varepsilon $ with $\varepsilon >0$ small enough, then $%
z\in V $ and $k+(z-x)\in \overset{\circ }{K},$ so that $x-k<_{K}z,$ and $%
k-(z-x)\in \overset{\circ }{K},$ so that $z<_{K}x+k.$ Therefore, $f(x-k)\leq
f(z)\leq f(x+k),$ which proves the claim.

From the above, for every $x\in V,$ there is an open neighborhood $%
U_{x}\subset V$ of $x$ such that $f$ is bounded on $U_{x}$ and, by the
separability of $X,$ there is a covering of $V$ by countably many $U_{x}.$
Thus, since a countable union of Aronszajn null sets is Aronszajn null and
since $U_{x}\subset V$ implies that $f$ is $K$-nondecreasing on $U_{x},$ it
is not restrictive to prove Theorem \ref{th1} with $V$ replaced by $U_{x}$
or, equivalently, to prove it under the additional assumption that $f$ is
bounded above and below on $V.$ This can be done by using Theorem \ref{th1}
with $V=X$ and $f$ replaced by a finite $K$-nondecreasing extension of $f$
to $X.$ Such an extension can be obtained as follows: Since $f$ is bounded
on $V,$ let $a,b\in \Bbb{R}$ be such that $a\leq f(x)\leq b$ for every $x\in
V$ and set 
\begin{equation*}
\widetilde{f}(x)=\left\{ 
\begin{array}{c}
a\text{ if }x\notin V+K, \\ 
\sup \{f(y):y\in V,y\leq _{K}x\}\text{ if }x\in V+K.
\end{array}
\right. 
\end{equation*}
Since $(V+K)+K=V+K$ and $\leq _{K}$ is transitive, it is straightforward to
check that $\widetilde{f}$ is $K$-nondecreasing on $X,$ that $\widetilde{f}=f
$ on $V$ and that $a\leq \widetilde{f}\leq b,$ so that $\widetilde{f}$ is
real-valued (when $x\in V+K,$ the set $\{y\in V,y\leq _{K}x\}$ is not empty,
so that the supremum in the definition of $\widetilde{f}$ is never $-\infty $%
). Other extensions are described in \cite{BoWa05}, but they need not be
finite when $f$ is finite.

\begin{remark}
The Borwein-Wang theorem was refined by Duda \cite[Remark 5.2]{Du08}: $f$ is
Hadamard differentiable on $V$ except at the points of a subset in a class $%
\widetilde{C}$ introduced by Preiss and Zaj\'{i}\u {c}ek, which is strictly
smaller than the class of Aronszajn null sets \cite[Proposition 13]{PrZa01}.
The class $\widetilde{C}$ may replace the Aronszajn null sets everywhere in
this paper.
\end{remark}

\subsection{Convex sets and category\label{convex sets}}

Convex sets have a few special properties relative to Baire category. Two
useful ones are given below. Like several other results, they are stated in
Banach spaces but are valid in greater generality.

\begin{lemma}
\label{lm2} (\cite[Lemma 3.1]{Ra12}) Let $X$ be a Banach space and let $U$
and $G$ be subsets of $X$ with $U$ open and $G$ convex. If $A\subset X$ is
of first category and $U\backslash A\subset G,$ then $U\subset G.$
\end{lemma}

The next lemma is a by-product of Lemma \ref{lm2}.

\begin{lemma}
\label{lm3} (\cite[Lemma 3.2]{Ra12}) If $X$ is a Banach space and $%
C\varsubsetneq X$ is convex, then $X\backslash C$ is locally of Baire second
category in $X,$ that is, for every open subset $U\subset X$ such that $%
U\cap (X\backslash C)$ is nonempty, $U\cap (X\backslash C)$ is of second
category in $X.$
\end{lemma}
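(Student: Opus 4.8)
The plan is to derive Lemma~\ref{lm3} directly from Lemma~\ref{lm2} by contraposition. Fix an open subset $U\subset X$ with $U\cap(X\backslash C)\neq\emptyset$, and suppose for contradiction that $U\cap(X\backslash C)$ is of first category in $X$. Set $A:=U\cap(X\backslash C)$, so that $A$ is of first category. The key observation is that removing $A$ from $U$ throws away exactly the points of $U$ that lie outside $C$, leaving $U\backslash A=U\cap C\subset C$. Thus the hypotheses of Lemma~\ref{lm2} are met with this $A$, the given open set $U$, and the convex set $G:=C$.

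Applying Lemma~\ref{lm2} then yields $U\subset C$. But this contradicts the assumption that $U\cap(X\backslash C)$ is nonempty, since $U\subset C$ forces $U\cap(X\backslash C)=\emptyset$. Therefore the supposition that $U\cap(X\backslash C)$ is of first category is untenable, and $U\cap(X\backslash C)$ must be of second category in $X$. Since $U$ was an arbitrary open set meeting $X\backslash C$, this establishes the local second-category property.

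I do not expect any genuine obstacle here; the lemma is explicitly flagged in the excerpt as ``a by-product of Lemma~\ref{lm2},'' so the entire content is packaging the contrapositive correctly. The one point requiring a small amount of care is the set-theoretic identity $U\backslash A=U\cap C$, which is what lets the convexity hypothesis of Lemma~\ref{lm2} be invoked with $G=C$; this is immediate once $A$ is chosen as $U\cap(X\backslash C)$ rather than as all of $X\backslash C$. It is also worth recording that the conclusion is vacuous (and the statement trivially true) when $C$ is dense, in the sense that the quantifier ranges only over those $U$ that actually meet the complement; no separate treatment of that case is needed because the argument never used nonemptiness of $X\backslash C$ beyond the clause already built into the statement.
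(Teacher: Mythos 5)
Your proof is correct and matches the paper's intended route: the paper itself gives no argument beyond citing \cite[Lemma 3.2]{Ra12} and noting that the lemma is ``a by-product of Lemma~\ref{lm2},'' and your contrapositive application of Lemma~\ref{lm2} with $A:=U\cap(X\backslash C)$ and $G:=C$ is exactly that by-product, carried out correctly.
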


Of course, the spirit of Lemma \ref{lm3} is that even locally, the exterior
of a convex subset $C\neq X$ is always a large set in some sense. While
intuitively clear when $X=\Bbb{R}^{N},$ the existence of dense and convex
proper subsets of infinite dimensional Banach spaces makes this issue less
transparent in general.

\section{Densely continuous quasiconvex functions\label{residually}}

In the Introduction, we defined the densely continuous real valued functions
on a tvs $X$ to be those functions that are continuous at the points of a
dense subset of $X.$ Recall that when $X$ is a Baire space, for instance a
Banach space, this is actually equivalent to continuity at the points of a
residual subset of $X.$

\subsection{Some general properties\label{properties}}

The first theorem of this section is part of the main result of \cite{Ra12}.
It gives a characterization of the densely continuous quasiconvex functions
in terms of their lower level sets. The notation (\ref{1}) is used and will
be used throughout the paper.

\begin{theorem}
\label{th4} (\cite[Corollary 5.3]{Ra12}) Let $X$ be a Banach space and let $%
f:X\rightarrow \Bbb{R}$ be quasiconvex. Set $m:=\mathcal{T}\limfunc{ess}%
\inf_{X}f$ (see (\ref{2})). Then, $f$ is densely continuous if and only if
(i) $\overset{\circ }{F}_{\alpha }\neq \emptyset $ when $\alpha >m$ and (ii)
either $m=-\infty $ or $m>-\infty $ and $F_{\alpha }$ is nowhere dense when $%
\alpha <m.$
\end{theorem}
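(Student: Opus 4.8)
The plan is to reduce the statement to a description of the set $D$ of points of discontinuity of $f$ and to control its Baire category level by level. Recall that in the Banach (hence Baire) space $X$, the function $f$ is densely continuous if and only if $D$ is of first category, since $D$ is always $\mathcal{F}_{\sigma }$ and an $\mathcal{F}_{\sigma }$ set in a Baire space is either of first category or has nonempty interior. First I would record the two one-sided characterizations in terms of the level sets (\ref{1}): $f$ is usc at $x$ iff $x\in \overset{\circ }{F}_{\alpha }$ for every $\alpha >f(x)$, and $f$ is lsc at $x$ iff $x\notin \overline{F}_{\alpha }$ for every $\alpha <f(x)$. Because the $F_{\alpha }$ are nested, rational thresholds suffice, which gives
\begin{equation*}
D=\bigcup_{q\in \Bbb{Q}}\left( F_{q}\setminus \overset{\circ }{F}_{q}\right) \cup \bigcup_{q\in \Bbb{Q}}\left( \overline{F}_{q}\setminus F_{q}^{\prime }\right) ,
\end{equation*}
the first union being where usc fails and the second where lsc fails. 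Both $F_{q}\setminus \overset{\circ }{F}_{q}$ and $\overline{F}_{q}\setminus F_{q}^{\prime }$ lie in the boundary $\partial F_{q}=\overline{F}_{q}\setminus \overset{\circ }{F}_{q}$ of the convex set $F_{q}$.

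For the implication (i)$\,+\,$(ii)$\,\Rightarrow \,$dense continuity, I would estimate these boundary sets against $m.$ For rational $q>m,$ hypothesis (i) gives $\overset{\circ }{F}_{q}\neq \emptyset ,$ and since a convex set with nonempty interior satisfies $\overset{\circ }{\overline{F}_{q}}=\overset{\circ }{F}_{q},$ its boundary $\partial F_{q}$ is nowhere dense. For rational $q<m,$ hypothesis (ii) says $F_{q}$ is nowhere dense, so $\partial F_{q}\subset \overline{F}_{q}$ is nowhere dense. The contributions with $q\leq m$ coming from the first (usc) union are contained in $F_{m},$ which is \emph{always} of first category; and the points of $f^{-1}(m)$ appearing in the second (lsc) union lie in $\bigcup_{q<m}\overline{F}_{q},$ of first category by (ii). After this bookkeeping, the only term not yet shown to be of first category is $E:=\overline{F}_{m}\cap \{f>m\},$ which is exactly the obstruction at level $m.$ This is the crux of the whole direction.

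To dispose of $E,$ let $B$ denote the interior of $\overline{F}_{m}.$ The key observation is that, for every rational $q>m,$ one has $B\subset \overline{F}_{m}\subset \overline{F}_{q},$ so $B$ being open forces $B\subset \overset{\circ }{\overline{F}_{q}}=\overset{\circ }{F}_{q}\subset \{f<q\};$ letting $q\downarrow m$ yields $f\leq m$ on $B.$ Hence $\{f>m\}$ misses $B,$ so $E\subset \overline{F}_{m}\setminus B=\partial \overline{F}_{m},$ which is nowhere dense (the boundary of the closed convex set $\overline{F}_{m}$ is nowhere dense whether or not $\overline{F}_{m}$ has interior). Therefore $D$ is of first category and $f$ is densely continuous. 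Note that (ii) is used only to make $\bigcup_{q<m}\overline{F}_{q}$ of first category, while (i) drives everything else, including the treatment of $E.$ The case $m=-\infty $ is the same argument with $F_{m}=\emptyset ,$ so $E=\emptyset .$

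For the converse, assume $f$ is densely continuous, so $D$ is of first category and the continuity points are dense. For (i), fix $\alpha >m$ and choose rational $\beta \in (m,\alpha );$ then $F_{\beta }$ is of second category, so $F_{\beta }\setminus D$ is nonempty and contains a continuity point $x$ with $f(x)<\beta <\alpha ,$ whence usc at $x$ gives $x\in \overset{\circ }{F}_{\alpha }.$ For (ii), suppose $m>-\infty $ but some $F_{\alpha }$ with $\alpha <m$ is not nowhere dense, so $\overline{F}_{\alpha }$ contains a nonempty open set $U;$ pick a continuity point $x_{0}\in U.$ Since $x_{0}\in \overline{F}_{\alpha },$ lsc at $x_{0}$ forces $f(x_{0})\leq \alpha <m,$ and choosing rational $\gamma \in (f(x_{0}),m),$ usc at $x_{0}$ gives $x_{0}\in \overset{\circ }{F}_{\gamma },$ so $F_{\gamma }$ is of second category — contradicting $\gamma <m.$ I expect the main obstacle to be neither direction of the equivalence per se but the accurate accounting at the single level $\alpha =m,$ i.e. controlling $E=\overline{F}_{m}\cap \{f>m\};$ the decisive gain is the fact that (i) forces $f\leq m$ throughout the interior of $\overline{F}_{m}.$
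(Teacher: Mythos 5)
Your proposal is correct, but strictly speaking there is no in-paper proof to compare it against: the paper states Theorem \ref{th4} purely as a quotation of \cite[Corollary 5.3]{Ra12} and supplies no argument, so your write-up is a self-contained substitute rather than a variant of anything in the text. Judged on its own terms it is sound, and it uses only tools the paper itself relies on elsewhere. The reduction of dense continuity to ``the discontinuity set $D$ is of first category'' is valid because $D$ is an $\mathcal{F}_{\sigma }$ in a Baire space; your rational-level decomposition of $D$ into usc-failures $F_{q}\setminus \overset{\circ }{F}_{q}$ and lsc-failures $\overline{F}_{q}\setminus F_{q}^{\prime }$ is exact (nestedness of the level sets lets you interpolate rational thresholds on both sides, and both pieces sit in $\partial F_{q}$); and the category bookkeeping correctly isolates $E=\overline{F}_{m}\cap \{f>m\}$ as the one term not disposed of by (ii), by the first-category property of $F_{m}$, or by the boundaries $\partial F_{q}$ with $q>m$ (the second-union points with $q\leq m$ and $f(x)<m$ are of course also absorbed by $F_{m}$). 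Your treatment of $E$ is the genuine crux and it holds up: for rational $q>m$, hypothesis (i) supplies $\overset{\circ }{F}_{q}\neq \emptyset $, which is exactly what legitimizes $\overset{\circ }{\overline{F}_{q}}=\overset{\circ }{F}_{q}$ --- the same fact from \cite[p. 105]{Be74} that the paper invokes in Theorem \ref{th5}(ii) --- whence $f\leq m$ on the interior $B$ of $\overline{F}_{m}$ and $E\subset \partial \overline{F}_{m}$ is nowhere dense. The converse direction is also correct, and it is worth observing that it never uses quasiconvexity, while the forward direction uses convexity of the $F_{q}$ only through the boundary fact just cited. As for what each route buys: the paper's citation keeps the exposition short and defers to a result established in \cite{Ra12} in the generality of Baire topological vector spaces; your argument makes the statement self-contained here, and in fact nothing in it is specific to Banach spaces --- it works verbatim in any Baire tvs, so no generality is lost relative to the imported result.
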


Other conditions equivalent to, or implying, dense continuity for
quasiconvex functions are given in \cite{Ra12}, but Theorem \ref{th4} will
suffice for our purposes. Since ``first category'', ``nowhere dense'' and
``empty interior'' are synonymous for convex subsets of $\Bbb{R}^{N},$
Theorem \ref{th4} shows that every real-valued quasiconvex function on $\Bbb{%
\ R}^{N}$ is densely continuous. This argument is independent of Crouzeix's
theorem mentioned in the Introduction.

In the next theorem, we prove a number of properties of the set $%
F_{m}^{\prime }$ which will be instrumental in the discussion of the
differentiability question. Part (i) was already noticed in \cite{Ra12}. The
very short proof is given for convenience.

\begin{theorem}
\label{th5} Let $X$ be a Banach space and let $f:X\rightarrow \Bbb{R}$ be
quasiconvex and densely continuous. Set $m:=\mathcal{T}\limfunc{ess}%
\inf_{X}f.$ \newline
(i) If $F_{m}^{\prime }$ (i.e., $f^{-1}(m)$) is of first category, then $%
F_{m}^{\prime }$ (and hence also $f^{-1}(m)$) is nowhere dense.\newline
(ii) If $F_{m}^{\prime }$ (i.e., $f^{-1}(m)$) is of second category, then $%
\overset{\circ }{F_{m}^{\prime }}\neq \emptyset .$ In particular, $\partial %
F_{m}^{\prime }=\overset{\circ }{\partial F_{m}^{\prime }}$ is nowhere
dense. If $X$ is separable, $\partial F_{m}^{\prime }$ is Aronszajn null.%
\newline
(iii) $X\backslash F_{m}^{\prime }$ is semi-open, i.e., contained in the
closure of its interior $X\backslash \overline{F}_{m}^{\prime }.$\newline
(iv) If $X$ is separable, the set of points of discontinuity of $f$ in $%
X\backslash F_{m}^{\prime }$ is contained in an Aronszajn null set.\newline
(v) If $X$ is separable, $F_{m}^{\prime }$ is contained in an Aronszajn null
(Haar null) set if and only if $\overline{F}_{m}^{\prime }$ is Aronszajn
null (Haar null).\newline
\end{theorem}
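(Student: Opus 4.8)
Throughout, write $D$ for the (dense, residual) set of continuity points of $f$. The plan is to base everything on the elementary inclusion $\overline{F}_m'\cap D\subset F_m'$: if $x\in D$ is the limit of points $x_k\in F_m'$, continuity gives $f(x)=\lim_k f(x_k)\le m$. Equivalently, $\overline{F}_m'\setminus F_m'\subset X\setminus D$ is of first category. Part~(i) is then immediate: if $F_m'$ is of first category then so is $\overline{F}_m'=F_m'\cup(\overline{F}_m'\setminus F_m')$, and a closed set of first category is nowhere dense, whence $F_m'\subset\overline{F}_m'$ is nowhere dense.

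For~(ii), $\overline{F}_m'$ contains the second-category set $F_m'$, so it is of second category; being closed and of second category, it has nonempty interior $U:=\overset{\circ}{\overline{F}_m'}$. This only produces interior points of the closure, and the step I would single out is the passage to $\overset{\circ}{F_m'}$: from $U\cap D\subset\overline{F}_m'\cap D\subset F_m'$ I read off $U\setminus(X\setminus D)\subset F_m'$, so \lemref{lm2} (with $F_m'$ convex and $X\setminus D$ of first category) forces $U\subset F_m'$ and hence $\overset{\circ}{F_m'}\neq\emptyset$. Once $F_m'$ is convex with nonempty interior, its boundary is nowhere dense by the standard convexity fact and, when $X$ is separable, Aronszajn null by Matou\u{s}kov\'{a}'s theorem (\secref{aronszajn}).

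Part~(iii) I would prove by a ray argument. Given $x$ with $f(x)>m$, pick $\gamma\in(m,f(x))$; by \thmref{th4}(i) the convex set $F_\gamma'$ has an interior point $x_0$, while $x\notin F_\gamma'$. The points $x_0+t(x-x_0)$ with $t>1$ then lie outside $\overline{F}_\gamma'$ (else the segment lemma for convex sets would place $x$ in $\overset{\circ}{F_\gamma'}$) and converge to $x$ as $t\to 1^+$; since $\overline{F}_m'\subset\overline{F}_\gamma'$, this exhibits $x$ in the closure of $X\setminus\overline{F}_m'=\overset{\circ}{(X\setminus F_m')}$, which is semi-openness.

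Parts~(iv) and~(v) are where the real work lies, and the device I expect to be decisive is to cover the awkward points by boundaries of strictly higher level sets, which, unlike $F_m'$ itself, do have interior. Writing continuity at $x$ as upper plus lower semicontinuity and transcribing into level sets, a point of discontinuity $x$ with $f(x)>m$ must either fail lower semicontinuity, which puts $x\in\partial F_\gamma'$ for each $\gamma\in(m,f(x))$, or fail upper semicontinuity, which puts $x\in\partial F_\delta$ for each $\delta$ just above $f(x)$. Taking rational $\gamma,\delta>m$, the discontinuity set inside $X\setminus F_m'$ sits in $\bigcup_{q\in\Bbb{Q},\,q>m}(\partial F_q\cup\partial F_q')$; each $F_q,F_q'$ is convex with nonempty interior by \thmref{th4}(i), so each boundary is Aronszajn null (\secref{aronszajn}) and the countable union is Aronszajn null, proving~(iv). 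For~(v) the same transcription along any sequence $\alpha_n\downarrow m$ yields $\overline{F}_m'\setminus F_m'\subset\bigcup_n\partial F_{\alpha_n}'$, an Aronszajn (hence Haar) null set; therefore if $F_m'$ is contained in an Aronszajn (Haar) null set, so is $\overline{F}_m'=F_m'\cup(\overline{F}_m'\setminus F_m')$, and being closed it is itself Aronszajn (Haar) null, the reverse implication being trivial. The one trap to avoid in~(iv)--(v) is working with $\overline{F}_m'$ directly, since a closed convex set with empty interior need not be Aronszajn null (\secref{aronszajn}); routing through the boundaries $\partial F_q'$, $\partial F_{\alpha_n}'$ of sets that genuinely have interior is precisely what circumvents this.
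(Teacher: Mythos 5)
Your proof is correct. Parts (i), (ii), (iv) and (v) follow essentially the paper's own route: the inclusion $\overline{F}_{m}^{\prime }\backslash F_{m}^{\prime }\subset X\backslash D,$ \lemref{lm2} applied to $\overset{\circ }{\overline{F}_{m}^{\prime }},$ and the covering of the offending points by boundaries of level sets at rational levels $q>m,$ which have nonempty interior by \thmref{th4} and hence Aronszajn null boundaries (the paper covers by $\partial F_{q}$ alone, you by $\partial F_{q}\cup \partial F_{q}^{\prime };$ for (v) the paper simply quotes (iv) together with $\overline{F}_{m}^{\prime }\subset F_{m}^{\prime }\cup A,$ while your direct covering of $\overline{F}_{m}^{\prime }\backslash F_{m}^{\prime }$ by $\cup _{n}\partial F_{\alpha _{n}}^{\prime }$ is the same computation). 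The genuinely different step is (iii): the paper splits into the two cases furnished by (i) and (ii) and, in the second case, invokes the fact that a convex set with nonempty interior has the same boundary as its closure; you instead give a single ray argument through an interior point $x_{0}$ of $F_{\gamma }^{\prime }$ with $\gamma \in (m,f(x)),$ producing points of $X\backslash \overline{F}_{\gamma }^{\prime }\subset X\backslash \overline{F}_{m}^{\prime }$ converging to $x$ along the ray beyond $x.$ This is self-contained (it uses neither (i) nor (ii)) and arguably cleaner. One small inaccuracy in (iv): failure of lower semicontinuity at $x$ places $x$ in $\partial F_{\gamma }^{\prime }$ only for $\gamma $ between $\lim \inf_{y\rightarrow x}f(y)$ and $f(x),$ not for every $\gamma \in (m,f(x))$ as you assert; since one rational $\gamma >m$ in that nonempty interval suffices, the inclusion into $\cup _{q\in \Bbb{Q},q>m}(\partial F_{q}\cup \partial F_{q}^{\prime })$ survives, but the intermediate claim as written is too strong.
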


\begin{proof}
(i) If $x\in \overline{F}_{m}^{\prime }\backslash F_{m}^{\prime },$ then $f$
is not continuous at $x,$ so that $\overline{F}_{m}^{\prime }\subset
F_{m}^{\prime }\cup A$ where $A$ is the set of points of discontinuity of $%
f. $ Since $f$ is densely continuous, $A$ is of first category, whence $%
\overline{F}_{m}^{\prime }$ is of first category, that is, $F_{m}^{\prime }$
is nowhere dense.

(ii) Since $F_{m}^{\prime }$ is of second category, $\overset{\circ }{%
\overline{F}_{m}^{\prime }}\neq \emptyset .$ On the other hand, if $x\in 
\overline{F}_{m}^{\prime }$ and $f$ is continuous at $x,$ then $x\in
F_{m}^{\prime }.$ Since the set $A$ of points of discontinuity of $f$ is of
first category, this means that $\overline{F}_{m}^{\prime }\backslash
A\subset F_{m}^{\prime }.$ In particular, $\overset{\circ }{\overline{F}
_{m}^{\prime }}\backslash A\subset F_{m}^{\prime }$ so that, by Lemma \ref
{lm2}, $\overset{\circ }{\overline{F}_{m}^{\prime }}\subset F_{m}^{\prime }.$
Thus, $\overset{\circ }{\overline{F}_{m}^{\prime }}\subset \overset{\circ }{%
F_{m}^{\prime }}$ and so $\overset{\circ }{F_{m}^{\prime }}=\overset{\circ }{%
\overline{F}_{m}^{\prime }}\neq \emptyset .$

A convex set with nonempty interior has the same boundary as its closure (%
\cite[p. 105]{Be74}) and the boundary of a closed set is nowhere dense.
Thus, $\partial F_{m}^{\prime }$ is nowhere dense. Since $\overset{\circ }{%
F_{m}^{\prime }}\neq \emptyset $ and $F_{m}^{\prime }$ is convex, $\partial
F_{m}^{\prime }$ is Aronszajn null when $X$ is separable (Subsection \ref
{aronszajn}).

(iii) When $\dim X=\infty ,$ this does not follow from the convexity of $%
F_{m}^{\prime }$ alone (the complement of a dense convex subset is not
semi-open) but, by (i) and (ii), we also know that either $F_{m}^{\prime }$
is nowhere dense, or $\overset{\circ }{F_{m}^{\prime }}\neq \emptyset .$

If $F_{m}^{\prime }$ is nowhere dense, $X\backslash \overline{F}_{m}^{\prime
}$ is dense in $X$ and therefore in $X\backslash F_{m}^{\prime }.$ Suppose
now that $\overset{\circ }{F_{m}^{\prime }}\neq \emptyset .$ The only points
of $X\backslash F_{m}^{\prime }$ which are not already in $X\backslash 
\overline{F}_{m}^{\prime }$ lie on $\partial (X\backslash F_{m}^{\prime
})=\partial F_{m}^{\prime }.$ As just pointed out in the proof of (ii), $%
F_{m}^{\prime }$ and $\overline{F}_{m}^{\prime }$ have the same boundary
because $F_{m}^{\prime }$ is convex and $\overset{\circ }{F_{m}^{\prime }}
\neq \emptyset .$ Therefore, if $x\in X\backslash F_{m}^{\prime }$ and $%
x\notin X\backslash \overline{F}_{m}^{\prime },$ then $x\in \partial 
\overline{F}_{m}^{\prime }=\partial (X\backslash \overline{F}_{m}^{\prime })$
is in the closure of $X\backslash \overline{F}_{m}^{\prime }.$

(iv) As before, let $A$ denote the set of points of discontinuity of $f.$
The claim is that $A\backslash F_{m}^{\prime }$ is contained in an Aronszajn
null set. If $x\in A\backslash F_{m}^{\prime },$ then $f(x)>m$ and there are
a sequence $(x_{n})\subset X$ with $\lim_{n\rightarrow \infty }x_{n}=x$ and $%
\alpha \in \Bbb{R}$ such that either $f(x_{n})<\alpha <f(x)$ for every $n\in 
\Bbb{N}$ or $f(x)<\alpha <f(x_{n})$ for every $n\in \Bbb{N}.$ With no loss
of generality, we may assume that $\alpha \in \Bbb{Q}$ and, since $f(x)>m,$
that $\alpha >m$ (of course, this is redundant when $f(x)<\alpha $).

In the first case (i.e., $f(x_{n})<\alpha <f(x)$), $x\in \overline{F}
_{\alpha }$ but $x\notin F_{\alpha },$ so that $x\in \partial F_{\alpha }.$
In the second (i.e., $f(x)<\alpha <f(x_{n})$), $x\in F_{\alpha }$ but $%
x\notin \overset{\circ }{F_{\alpha }},$ so that once again $x\in \partial
F_{\alpha }.$ Thus, $A\backslash F_{m}^{\prime }\subset \cup _{\alpha \in 
\Bbb{Q},\alpha >m}\partial F_{\alpha }.$ By Theorem \ref{th4}, $\overset{
\circ }{F_{\alpha }}\neq \emptyset $ when $\alpha >m,$ so that $\partial
F_{\alpha }$ is Aronszajn null since $F_{\alpha }$ is convex (Subsection \ref
{aronszajn}) and so $\cup _{\alpha \in \Bbb{Q},\alpha >m}\partial F_{\alpha
} $ is Aronszajn null.

(v) The sufficiency is trivial. To prove the necessity, suppose then that $%
F_{m}^{\prime }$ is contained in an Aronszajn null (Haar null) set. As noted
in the proof of (i) above, $\overline{F}_{m}^{\prime }\subset F_{m}^{\prime
}\cup A$ where $A$ is the set of points of discontinuity of $f$ and so $%
\overline{F}_{m}^{\prime }\subset F_{m}^{\prime }\cup (A\backslash
F_{m}^{\prime }).$ By (iv), $A\backslash F_{m}^{\prime }$ is contained in an
Aronszajn null set, so that $\overline{F}_{m}^{\prime }$ is Aronszajn null
(Haar null) since it is Borel.
\end{proof}

While the quasiconvexity of $f$ is actually unnecessary in part (i) of
Theorem \ref{th5}, it is important in part (ii). By Theorem \ref{th4}, every
lower level set $F_{\alpha }^{\prime }$ -not just $F_{m}^{\prime }$- is
either nowhere dense or has nonempty interior, but it is only when $\alpha
=m $ that both options are possible ($F_{\alpha }^{\prime }$ is nowhere
dense when $\alpha <m$ since $F_{\alpha }^{\prime }\subset F_{\beta }$ with $%
\alpha <\beta <m$ and has nonempty interior when $\alpha >m$). In contrast, $%
F_{m}$ is always of first category but it may or may not be nowhere dense.

\begin{remark}
\label{rm1}It is readily checked that the complement of an Aronszajn null
set in a semi-open subset is dense in that subset. Thus, when $X$ is
separable, parts (iii) and (iv) of Theorem \ref{th5} show that the set of
points of continuity of $f$ in $X\backslash F_{m}^{\prime }$ is dense in $%
X\backslash F_{m}^{\prime }.$
\end{remark}

In a separable and reflexive Banach space, a finite quasiconvex function is
densely continuous if and only if its set of points of discontinuity is Haar
null. This perhaps surprising result is proved next.

\begin{theorem}
\label{th6}Let $X$ be a reflexive and separable Banach space. The
quasiconvex function $f:X\rightarrow \Bbb{R}$ is densely continuous if and
only if the set of points of discontinuity of $f$ is Haar null.
\end{theorem}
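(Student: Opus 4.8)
The sufficiency is immediate and uses nothing about quasiconvexity: since no nonempty open set is Haar null (Subsection \ref{haar}), the complement of the set of discontinuity points is dense as soon as that set is Haar null, so $f$ is densely continuous. The whole content is the necessity, and the plan is to exhibit an explicit Borel cover of the discontinuity set $A$ by countably many Haar null sets, treating the levels above and below $m$ by different mechanisms and carefully excising the single exceptional level $\alpha=m$.

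Assume $f$ densely continuous and let $A$ be its set of discontinuity points, which is an $\mathcal{F}_{\sigma}$ (hence Borel) by the remark in Section \ref{intro}. First I would record the elementary observation that every $x\in A$ lies on the boundary of some rational lower level set at a level other than $m$. Indeed, a discontinuity at $x$ produces $\varepsilon >0$ and a sequence $x_{n}\rightarrow x$ with, after passing to a subsequence, either $f(x_{n})\geq f(x)+\varepsilon $ for all $n$ or $f(x_{n})\leq f(x)-\varepsilon $ for all $n$. In the first case any rational $\alpha \in (f(x),f(x)+\varepsilon )$ satisfies $x\in F_{\alpha }$ while $x_{n}\notin F_{\alpha }$, so $x\in \partial F_{\alpha }$; in the second case any rational $\alpha \in (f(x)-\varepsilon ,f(x))$ satisfies $x_{n}\in F_{\alpha }$ and $x\notin F_{\alpha }$, so again $x\in \partial F_{\alpha }$. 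Since the admissible $\alpha $ range over a nonempty open interval, I can always choose $\alpha \neq m$. Hence $A\subseteq \bigcup_{\alpha \in \Bbb{Q},\,\alpha \neq m}\partial F_{\alpha }$, and each $\partial F_{\alpha }=\overline{F}_{\alpha }\backslash \overset{\circ }{F}_{\alpha }$ is closed, hence Borel, irrespective of any measurability of $f$.

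It then remains to check that each $\partial F_{\alpha }$ with $\alpha \neq m$ is Haar null. For $\alpha >m$, Theorem \ref{th4}(i) gives $\overset{\circ }{F}_{\alpha }\neq \emptyset $, so $F_{\alpha }$ is a convex set with nonempty interior and $\partial F_{\alpha }$ is Aronszajn null, hence Haar null (Subsections \ref{aronszajn}, \ref{haar}). For $\alpha <m$ (which occurs only when $m>-\infty $), Theorem \ref{th4}(ii) gives that $F_{\alpha }$ is nowhere dense, so $\overline{F}_{\alpha }$ is a closed convex set with empty interior; this is exactly where reflexivity enters, via Matou\u{s}kov\'{a}'s theorem (Subsection \ref{haar}), which makes $\overline{F}_{\alpha }\supseteq \partial F_{\alpha }$ Haar null. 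A countable union of Haar null sets being Haar null, and $A$ being Borel, I conclude that $A$ is Haar null. (When $m=-\infty $ there are no levels below $m$ and every $\partial F_{\alpha }$ is Aronszajn null, so the conclusion is unchanged.)

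The main obstacle, and the reason the statement is confined to the reflexive case and the level $\alpha =m$ must be avoided, is precisely the behaviour at level $m$: $F_{m}$ is only of first category and, unlike every other level, $\overline{F}_{m}$ may have nonempty interior, so $\overline{F}_{m}$ need not be Haar null. The structural point that saves the argument is that a genuine jump of size $\varepsilon $ opens up a whole interval of separating levels, leaving room to pick a rational level different from the single bad value $m$; all the analytic weight is then carried by Theorem \ref{th4} (to control the size of $F_{\alpha }$ on each side of $m$) together with the two null-set facts, namely that the boundary of a convex body is Aronszajn null and that closed convex sets with empty interior are Haar null in reflexive spaces.
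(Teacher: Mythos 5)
Your proposal is correct and follows essentially the same route as the paper: cover the discontinuity set $A$ by $\bigcup_{\alpha\in\Bbb{Q},\,\alpha\neq m}\partial F_{\alpha}$, then kill each boundary using Theorem \ref{th4} together with the Aronszajn-nullity of boundaries of convex bodies for $\alpha>m$ and Matou\u{s}kov\'{a}'s theorem on closed convex sets with empty interior for $\alpha<m$. The only difference is that you spell out in more detail why the separating level $\alpha$ can always be chosen rational and different from $m$, a point the paper passes over with a ``with no loss of generality.''
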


\begin{proof}
Since the complement of a Haar null set is dense, the ``if'' part is clear.
Conversely, since the set $A$ of points of discontinuity of $f$ is an $%
\mathcal{F}_{\sigma }$ (hence Borel), it suffices to show that if $f$ is
densely continuous, $A$ is contained in a Haar null set. This will be seen
by a suitable modification of the proof of part (iv) of Theorem \ref{th5}.

If $x\in A,$ there are a sequence $(x_{n})\subset X$ with $%
\lim_{n\rightarrow \infty }x_{n}=x$ and $\alpha \in \Bbb{R}$ such that
either $f(x_{n})<\alpha <f(x)$ for every $n\in \Bbb{N}$ or $f(x)<\alpha
<f(x_{n})$ for every $n\in \Bbb{N}.$ With no loss of generality, we may
assume that $\alpha \in \Bbb{Q}$ and $\alpha \neq m:=\mathcal{T}\limfunc{ess}
\inf_{X}f<\infty .$

In the first case, $x\in \overline{F}_{\alpha }$ but $x\notin F_{\alpha },$
so that $x\in \partial F_{\alpha }$ and, in the second, $x\in F_{\alpha }$
but $x\notin \overset{\circ }{F_{\alpha }},$ so that once again $x\in
\partial F_{\alpha }.$ If ($m>-\infty $ and) $\alpha <m,$ it follows from
Theorem \ref{th4} that $\partial F_{\alpha }=\overline{F}_{\alpha }$ has
empty interior. Since $X$ is reflexive and separable, $\partial F_{\alpha }$
is Haar null (Subsection \ref{haar}). If $\alpha >m,$ then $\overset{\circ }{%
F_{\alpha }}\neq \emptyset $ by Theorem \ref{th4}, so that $\partial
F_{\alpha }$ is Aronszajn null (Subsection \ref{aronszajn}) and therefore
Haar null. Thus, $\cup _{\alpha \in \Bbb{Q}\backslash \{m\}}\partial
F_{\alpha }$ is Haar null and $A\subset \cup _{\alpha \in \Bbb{Q}\backslash
\{m\}}\partial F_{\alpha }.$ Since $A$ is Borel -indeed an $\mathcal{F}%
_{\sigma }$- it is Haar null.
\end{proof}

When $X=\Bbb{R}^{N},$ the set of points of discontinuity of $f$ is even $%
\sigma $-porous; see \cite[Theorem 19]{BoWa05} (as pointed out in 
\cite[Remark 5.2]{Ra12}, the lsc assumption in that theorem is not needed).

\subsection{Ideally quasiconvex functions\label{ideally}}

The subset $C$ of the Banach space $X$ is said to be ideally convex if $%
\Sigma _{n=1}^{\infty }\lambda _{n}x_{n}\in C$ for every bounded sequence $%
(x_{n})\subset C$ and every sequence $(\lambda _{n})\subset [0,1]$ such that 
$\Sigma _{n=1}^{\infty }\lambda _{n}=1.$ The boundedness of $(x_{n})$
ensures that $\Sigma _{n=1}^{\infty }\lambda _{n}x_{n}$ is absolutely
convergent. Such subsets were introduced by Lif\v {s}ic \cite{Li70} in 1970
but, apparently, they have not been used in connection with quasiconvex
functions. Indeed, without Theorem \ref{th4}, the purpose of doing so is not
apparent.

\begin{definition}
\label{def1} The function $f:X\rightarrow \Bbb{R}$ is ideally quasiconvex
(strongly ideally quasiconvex) if its lower level sets $F_{\alpha }^{\prime }
$ ($F_{\alpha }$) are ideally convex.
\end{definition}

It is readily seen that a strongly ideally quasiconvex function is ideally
quasiconvex (use $F_{\alpha }^{\prime }=\cap _{\beta >\alpha }F_{\beta }$),
but the converse is false; see Remark \ref{rm2} later. Also, $f$ is ideally
quasiconvex if and only if $f\left( \Sigma _{n=1}^{\infty }\lambda
_{n}x_{n}\right) \leq \sup_{n}f(x_{n})$ for every bounded sequence $%
(x_{n})\subset X$ and every sequence $(\lambda _{n})\subset [0,1]$ such that 
$\Sigma _{n=1}^{\infty }\lambda _{n}=1.$ Strong ideal convexity amounts to $%
f\left( \Sigma _{n=1}^{\infty }\lambda _{n}x_{n}\right) <\alpha $ whenever $%
f(x_{n})<\alpha $ for every $n.$ This has nothing to do with strict
quasiconvexity. For instance, every usc quasiconvex function is strongly
ideally quasiconvex (see below).

If $C\subset X$ is ideally convex, it is CS-closed as defined by Jameson 
\cite{Ja72} and the two concepts coincide when $C$ is bounded. By a
straightforward generalization of Carath\'{e}odory's theorem (see for
example \cite{CoWe72}), every convex subset of $\Bbb{R}^{N}$ is ideally
convex. Therefore, every quasiconvex function on $\Bbb{R}^{N}$ is strongly
ideally quasiconvex. Open convex subsets are CS-closed (\cite{FrTa79}),
hence ideally convex, and the same thing is trivially true of closed convex
subsets. In particular, lsc (usc) quasiconvex functions are ideally
quasiconvex (strongly ideally quasiconvex).

We shall also need the remark that an ideally convex subset $C$ can only be
nowhere dense or have nonempty interior. Indeed, if $C$ is not nowhere
dense, $C\cap B$ is dense in $B$ for some nonempty open ball $B.$ Since $C$
and $B$ are ideally convex, $C\cap B$ is ideally convex (obvious) and
bounded and therefore CS-closed. By \cite[Corollary 1]{Ja72}, this implies
that $C\cap B=B,$ so that $B\subset C$ and so $C$ has nonempty interior.

The main properties of (strongly) ideally quasiconvex functions are captured
in the next theorem.

\begin{theorem}
\label{th7}Let $X$ be a Banach space. \newline
(i) If $f:X\rightarrow \Bbb{R}$ is ideally quasiconvex, then $f$ is
quasiconvex and densely continuous.\newline
(ii) If $f:X\rightarrow \Bbb{R}$ is strongly ideally quasiconvex and $m:=%
\mathcal{T}\limfunc{ess}\inf_{X}f,$ then $f$ is quasiconvex and densely
continuous and $F_{m}$ is nowhere dense. More generally, this is true if $f$
is ideally quasiconvex and $F_{m}$ is ideally convex.
\end{theorem}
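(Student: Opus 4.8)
The plan is to obtain quasiconvexity for free and to extract dense continuity from the level-set characterization in Theorem~\ref{th4}, the engine throughout being the dichotomy established just above the statement: \emph{an ideally convex subset of $X$ is either nowhere dense or has nonempty interior}. Combined with the elementary Baire facts that a set with nonempty interior is of second category while a nowhere dense set is of first category, this upgrades the crude category information encoded in $m$ into the sharper ``nowhere dense versus nonempty interior'' alternative demanded by Theorem~\ref{th4}.

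For part (i), since each $F_{\alpha}^{\prime }$ is ideally convex it is in particular convex, so $f$ is quasiconvex by de Finetti's criterion. To verify dense continuity I would check the two conditions of Theorem~\ref{th4} by interpolating an auxiliary level between $\alpha $ and $m$ and transferring information along the inclusions $F_{\beta }^{\prime }\subset F_{\alpha }\subset F_{\alpha }^{\prime }$ (valid whenever $\beta <\alpha $). Concretely: if $\alpha >m$, pick $\beta $ with $m<\beta <\alpha $; then $F_{\beta }\subset F_{\beta }^{\prime }$ is of second category (as $\beta >m$), so the ideally convex set $F_{\beta }^{\prime }$ cannot be nowhere dense and hence has nonempty interior, whence $\overset{\circ }{F}_{\alpha }\supset \overset{\circ }{F_{\beta }^{\prime }}\neq \emptyset $, giving condition (i). If instead $m>-\infty $ and $\alpha <m$, pick $\beta $ with $\alpha <\beta <m$; then $F_{\alpha }^{\prime }\subset F_{\beta }$ is of first category, so the ideally convex set $F_{\alpha }^{\prime }$ must be nowhere dense, and therefore so is its subset $F_{\alpha }$, giving condition (ii). Theorem~\ref{th4} then yields dense continuity.

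For part (ii), I would first record that strong ideal quasiconvexity implies ideal quasiconvexity via $F_{\alpha }^{\prime }=\cap _{\beta >\alpha }F_{\beta }$ together with the stability of ideal convexity under intersection, so part (i) already delivers quasiconvexity and dense continuity. It then remains to show that $F_{m}$ is nowhere dense. But $F_{m}$ is \emph{always} of first category (being the countable union $\cup _{\alpha <m}F_{\alpha }$, as noted in the Introduction), and here $F_{m}$ is ideally convex by hypothesis; the dichotomy forces a first-category ideally convex set to be nowhere dense. The stated ``more general'' version is identical: ideal quasiconvexity supplies dense continuity through part (i), and the hypothesis that $F_{m}$ is ideally convex, combined with its automatic first category, again forces $F_{m}$ to be nowhere dense.

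The proof is short, and I do not anticipate a serious obstacle once the dichotomy for ideally convex sets is in hand; the only point requiring care is the bookkeeping between the strict sets $F_{\alpha }$ (which appear in the conditions of Theorem~\ref{th4} and in the definition of $m$) and the non-strict sets $F_{\alpha }^{\prime }$ (which are the ones known to be ideally convex in part (i)). The interpolation of an intermediate level $\beta $ is precisely what lets the dichotomy, available only for the $F_{\alpha }^{\prime }$, be transferred to the $F_{\alpha }$ through the inclusions above.
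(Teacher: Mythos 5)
Your proposal is correct and follows essentially the same route as the paper: both rest on the dichotomy that an ideally convex set is either nowhere dense or has nonempty interior, verify the two conditions of Theorem~\ref{th4} by interpolating a level between $\alpha$ and $m$ and passing between $F_{\beta}$ and $F_{\beta}^{\prime}$, and settle (ii) by noting that $F_{m}$ is automatically of first category, hence nowhere dense once it is ideally convex. The only (immaterial) difference is that you argue the case $\alpha<m$ directly, where the paper argues by contradiction.
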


\begin{proof}
(i) If $\alpha >m,$ then $F_{\alpha }^{\prime }$ is ideally convex but not
nowhere dense since $F_{\alpha }^{\prime }\supset F_{\alpha }$ is of second
category by definition of $m.$ Thus $\overset{\circ }{F_{\alpha }^{\prime }}%
\neq \emptyset $ by the remark before the theorem. Now, let $\alpha <m$ and,
by contradiction, suppose that $F_{\alpha }$ is not nowhere dense, so that
the larger $F_{\alpha }^{\prime }$ (ideally convex) is not nowhere dense
either. By the remark before the theorem, $\overset{\circ }{F_{\alpha
}^{\prime }}\neq \emptyset $ and so $\overset{\circ }{F_{\beta }}\neq
\emptyset $ for every $\beta >\alpha .$ But $F_{\beta }$ is of first
category by definition of $m$ if $\beta \in (\alpha ,m)$ and a contradiction
arises. Thus, $F_{\alpha }$ is nowhere dense when $\alpha <m.$ That $f$ is
densely continuous now follows from Theorem \ref{th4}.

(ii) By (i), $f$ is densely continuous. Since $F_{m}$ is ideally convex and
has empty interior (it is of first category), it is nowhere dense, once
again by the remark before the theorem.
\end{proof}

Strongly ideally quasiconvex functions are the closest generalization of
finite dimensional quasiconvex functions. As we shall see in Section \ref
{differentiability1}, both have the same differentiability properties. By
Theorem \ref{th7} (i), ideally quasiconvex functions possess only the
continuity properties of the finite dimensional case.

\section{Differentiability above the essential infimum\label{above}}

Throughout this section, $X$ is a separable Banach space and $f:X\rightarrow 
\Bbb{R}$ is quasiconvex and densely continuous. With $m:=\mathcal{T}\limfunc{
ess}\inf_{X}f,$ the goal is to prove that $f$ is Hadamard differentiable at
every point of $X\backslash F_{m}^{\prime }$ except at the points of an
Aronszajn null set.

The last preliminary lemma will enable us to use Theorem \ref{th1} to settle
the differentiability question at points of $X\backslash F_{m}^{\prime }.$
The line of argument of the proof, but with other assumptions, has been used
before (\cite[Theorem 3.1]{Cr05}, \cite[Proposition 2]{BoWa05}).

\begin{lemma}
\label{lm8} Let $x\in X\backslash F_{m}^{\prime }$ be a point of continuity
of $f.$ Then, $x\in X\backslash \overline{F}_{m}^{\prime }$ and there is an
open neighborhood $U_{x}$ of $x$ contained in $X\backslash \overline{F}%
_{m}^{\prime }$ and a closed convex cone $K_{x}\subset X$ with nonempty
interior such that $f$ is $K_{x}$-nondecreasing on $U_{x}.$
\end{lemma}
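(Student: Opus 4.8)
The plan is to reduce everything to a single auxiliary level $\alpha$ chosen strictly between $m$ and $f(x)$, and then to manufacture a cone $K_x$ aimed at the interior of the convex sublevel set $F_{\alpha}^{\prime}$, so that quasiconvexity turns ``pointing away from $F_{\alpha}^{\prime}$'' into monotonicity.

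First I would dispose of (i) and (ii). Since $x\in X\setminus F_{m}^{\prime}$ we have $f(x)>m$; fix $\alpha$ with $m<\alpha<f(x)$. Because $f$ is continuous at $x$, any sequence $x_{n}\to x$ satisfies $f(x_{n})\to f(x)>\alpha$, so $f(x_{n})>\alpha$ eventually; hence no sequence of $F_{\alpha}^{\prime}=\{f\le\alpha\}$ converges to $x$, i.e. $x\notin\overline{F}_{\alpha}^{\prime}$. As $m<\alpha$ gives $F_{m}^{\prime}\subset F_{\alpha}^{\prime}$ and therefore $\overline{F}_{m}^{\prime}\subset\overline{F}_{\alpha}^{\prime}$, this already yields $x\notin\overline{F}_{m}^{\prime}$, which is (i). Choosing an open ball $U_{x}=B(x,\rho)\subset X\setminus\overline{F}_{\alpha}^{\prime}\subset X\setminus\overline{F}_{m}^{\prime}$ gives the neighbourhood required in (ii), and note that $f>\alpha$ on $U_{x}$; the radius $\rho$ will be shrunk once more below.

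Next I would build the cone. By Theorem \ref{th4}, $\overset{\circ}{F}_{\alpha}\neq\emptyset$ since $\alpha>m$, so I may fix $x_{0}$ and $r>0$ with $B(x_{0},2r)\subset F_{\alpha}\subset F_{\alpha}^{\prime}$. Put $v:=x-x_{0}$, which is nonzero since $x_{0}\in F_{\alpha}^{\prime}$ while $x\notin\overline{F}_{\alpha}^{\prime}$, and for a small $\sigma\in(0,\|v\|)$ set
\[
K_{x}:=\{\lambda(v+u):\lambda\ge 0,\ \|u\|\le\sigma\}.
\]
This is the conical hull of the closed ball $\overline{B}(v,\sigma)$, hence convex; it is closed because $\|v+u\|\ge\|v\|-\sigma>0$ keeps the scalar $\lambda$ bounded along any convergent sequence, so limits stay in $K_x$; and it has nonempty interior $\{\lambda(v+u):\lambda>0,\ \|u\|<\sigma\}$ because $0\notin\overline{B}(v,\sigma)$. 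I then shrink $\rho$ and $\sigma$ so that $\rho+\sigma<2r$.

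Finally I would verify that $f$ is $K_{x}$-nondecreasing on $U_{x}$, which is the heart of the matter. Take $y\in U_{x}$ and $k=\lambda(v+u)\in K_{x}\setminus\{0\}$ with $y+k\in U_{x}$. Setting $w:=y-(v+u)$, a direct computation gives $w-x_{0}=(y-x)-u$ (using $x-v=x_{0}$), whence $\|w-x_{0}\|\le\rho+\sigma<2r$, so $w\in F_{\alpha}^{\prime}$ and $f(w)\le\alpha$. One checks that $y=(1-t)w+t(y+k)$ with $t=\tfrac{1}{1+\lambda}\in(0,1)$, so $y$ lies on the segment from $w$ to $y+k$; quasiconvexity then gives $f(y)\le\max\{f(w),f(y+k)\}\le\max\{\alpha,f(y+k)\}=f(y+k)$, the last equality because $y+k\in U_{x}\subset\{f>\alpha\}$. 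The continuity reduction and the cone's elementary properties are routine; the genuine obstacle is this last geometric step, namely arranging the cone so that for every admissible $y$ and direction the \emph{backward} ray from $y$ actually lands inside $F_{\alpha}^{\prime}$. Anchoring $K_{x}$ to the fixed interior ball $B(x_{0},2r)$ — rather than to a separating functional, which only produces a half-space and need not force the ray to meet $F_{\alpha}^{\prime}$ — is what makes the argument succeed in a general (non-Hilbert) separable Banach space.
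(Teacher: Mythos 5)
Your proof is correct and is essentially the paper's own argument: both choose $\alpha\in(m,f(x))$, take for $K_{x}$ the closed conical hull of a small ball centered at the difference between $x$ and an interior point of $F_{\alpha}$ (the paper's $\cup _{\lambda \geq 0}\lambda \overline{B}(2(x-x_{-}),2\varepsilon )$ is the same cone as your $\cup _{\lambda \geq 0}\lambda \overline{B}(v,\sigma )$, since rescaling the generating ball does not change the cone), and conclude by the identical quasiconvexity step, writing $y$ as a convex combination of $y+k$ and a backward point lying in $F_{\alpha}$. The only differences are bookkeeping: you use separate parameters $\rho ,\sigma ,r$ with $\rho +\sigma <2r$ where the paper uses a single $\varepsilon$, and you place $U_{x}$ in $X\backslash \overline{F}_{\alpha}^{\prime }$ rather than $X\backslash F_{\alpha }^{\prime }$, which changes nothing in the argument.
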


\begin{proof}
Since $f(x)>m,$ choose $\alpha \in (m,f(x)),$ so that $\overset{\circ }{F}%
_{\alpha }\neq \emptyset $ by Theorem \ref{th4}. Pick $x_{-}\in \overset{%
\circ }{F}_{\alpha }$and $\varepsilon >0$ small enough that $\overline{B}%
(x_{-},2\varepsilon )\subset F_{\alpha },$ that $U_{x}:=B\left(
x,\varepsilon \right) \subset X\backslash F_{\alpha }^{\prime }$ (this is
possible since $f(x)>\alpha $ and $f$ is continuous at $x;$ in particular, $%
U_{x}$ is contained in the interior $X\backslash \overline{F}_{m}^{\prime }$
of $X\backslash F_{m}^{\prime }$) and that $0\notin \overline{B}%
(2(x-x_{-}),2\varepsilon ).$ Now, set $x_{+}:=2x-x_{-},$ so that $x$ is the
midpoint of $x_{-}$ and $x_{+}$ and 
\begin{equation*}
K_{x}:=\cup _{\lambda \geq 0}\lambda \overline{B}(x_{+}-x_{-},2\varepsilon
)=\cup _{\lambda \geq 0}\lambda \overline{B}(2(x-x_{-}),2\varepsilon ).
\end{equation*}
Clearly, $K_{x}$ is a convex cone with nonempty interior. That it is closed
easily follows from the assumption $0\notin \overline{B}(2(x-x_{-}),2%
\varepsilon ).$ Also, if $k\in K_{x}\backslash \{0\},$ then $x-\frac{k}{%
2\lambda }\in \overline{B}\left( x_{-},\varepsilon \right) $ where $\lambda
>0$ is chosen such that $k\in \lambda \overline{B}(x_{+}-x_{-},2\varepsilon
).$ Hence, if $y\in U_{x}=B\left( x,\varepsilon \right) ,$ it follows that $%
y-\frac{k}{2\lambda }=(y-x)+(x-\frac{k}{2\lambda })\in B(x_{-},2\varepsilon
).$

Suppose now that $y\in U_{x},k\in K_{x}\backslash \{0\}$ and $y+k\in U_{x}.$
From the above, $y-\frac{k}{2\lambda }=z\in B(x_{-},2\varepsilon ).$ By
writing $\frac{1}{2\lambda }=\frac{t}{1-t}$ for some $t\in (0,1),$ this
yields $y=t(y+k)+(1-t)z.$ Since $f$ is quasiconvex, $f(y)\leq \max
\{f(y+k),f(z)\}=f(y+k),$ the latter because $f(z)<\alpha <f(y+k)$ (recall $%
z\in F_{\alpha }$ and $y+k\in U_{x}\subset X\backslash F_{\alpha }^{\prime }$%
).
\end{proof}

\begin{theorem}
\label{th9}Let $X$ be a separable Banach space and let $f:X\rightarrow \Bbb{R%
}$ be quasiconvex and densely continuous. Set $m:=\mathcal{T}\limfunc{ess}%
\inf_{X}f.$ Then, $f$ is Hadamard differentiable on $X\backslash
F_{m}^{\prime }$ except at the points of an Aronszajn null set.
\end{theorem}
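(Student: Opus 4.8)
The plan is to decompose $X\backslash F_{m}^{\prime }$ according to whether $f$ is continuous and to dispatch each piece to one of the tools already assembled. Let $A$ denote the set of points of discontinuity of $f$. The points of $A\cap (X\backslash F_{m}^{\prime })$ require no work: by part (iv) of Theorem \ref{th5}, they are already contained in an Aronszajn null set, and since such sets form a $\sigma $-ring containing all their Borel subsets, it suffices to prove Hadamard differentiability off an Aronszajn null set at the remaining points, namely the points of continuity of $f$ lying in $X\backslash F_{m}^{\prime }$.

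Write $C$ for this set of continuity points. By Lemma \ref{lm8}, every $x\in C$ actually lies in the open set $X\backslash \overline{F}_{m}^{\prime }$ and possesses an open neighborhood $U_{x}\subset X\backslash \overline{F}_{m}^{\prime }$ together with a closed convex cone $K_{x}$ with $\overset{\circ }{K}_{x}\neq \emptyset $ such that $f$ is $K_{x}$-nondecreasing on $U_{x}$. The family $\{U_{x}:x\in C\}$ is an open cover of $C$, and its union $W:=\bigcup_{x\in C}U_{x}$ is open. Since $X$ is separable, hence Lindel\"{o}f, I extract a countable subfamily $(U_{x_{n}})_{n\in \Bbb{N}}$ with $W=\bigcup_{n}U_{x_{n}}$.

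Now I apply the Borwein-Wang theorem on each member of the subcover: on $U_{x_{n}}$ the function $f$ is $K_{x_{n}}$-nondecreasing, so Theorem \ref{th1} yields an Aronszajn null set $N_{n}$ outside of which $f$ is Hadamard differentiable on $U_{x_{n}}$. Setting $N:=\bigcup_{n}N_{n}$, which is Aronszajn null because countable unions of Aronszajn null sets are Aronszajn null, I conclude that $f$ is Hadamard differentiable at every point of $W\backslash N$, and in particular at every point of $C\backslash N$ since $C\subset W$. Collecting the two exceptional sets, $f$ is Hadamard differentiable at every point of $X\backslash F_{m}^{\prime }$ outside the union of $N$ with the Aronszajn null set furnished by part (iv) of Theorem \ref{th5}; this union is again Aronszajn null, which proves the theorem.

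There is really no serious obstacle left: the substantive work has already been carried out in Lemma \ref{lm8}, which converts the quasiconvexity together with the location above level $m$ into local cone monotonicity, and in the Borwein-Wang differentiation theorem. The only points needing care are that the continuity points of $f$ in $X\backslash F_{m}^{\prime }$ genuinely fall inside the open set $X\backslash \overline{F}_{m}^{\prime }$, so that the neighborhoods $U_{x}$ may be taken open and the Lindel\"{o}f argument applies cleanly, and that the discontinuity points need not be analyzed at all, being absorbed wholesale into an Aronszajn null set by part (iv) of Theorem \ref{th5}.
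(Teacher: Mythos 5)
Your proposal is correct and follows essentially the same route as the paper's proof: both isolate the continuity points of $f$ in $X\backslash F_{m}^{\prime }$, apply Lemma \ref{lm8} together with the Borwein--Wang theorem on a countable (Lindel\"{o}f) subcover of the resulting neighborhoods, and absorb the discontinuity points into an Aronszajn null set via Theorem \ref{th5}~(iv). The only difference is cosmetic (you dispose of the discontinuity points first rather than last).
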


\begin{proof}
With no loss of generality, assume that $X\backslash F_{m}^{\prime }\neq
\emptyset .$ Since $X\backslash \overline{F}_{m}^{\prime }$ is dense in $%
X\backslash F_{m}^{\prime }$ (Theorem \ref{th5} (iii)), it follows that $%
X\backslash \overline{F}_{m}^{\prime }\neq \emptyset .$ If $\mathcal{C}_{m}$
denotes the set of points of continuity of $f$ in $X\backslash F_{m}^{\prime
},$ then $\mathcal{C}_{m}\subset X\backslash \overline{F}_{m}^{\prime }$ by
Lemma \ref{lm8} and $\mathcal{C}_{m}\neq \emptyset $ since $X\backslash 
\overline{F}_{m}^{\prime }$ is open and nonempty and $f$ is densely
continuous.

If $x\in \mathcal{C}_{m},$ it follows from Lemma \ref{lm8} and Theorem \ref
{th1} that there is an open neighborhood $U_{x}$ of $x$ contained in $%
X\backslash \overline{F}_{m}^{\prime }$ such that $f$ is Hadamard
differentiable on $U_{x}$ except at the points of an Aronszajn null set.
Since $X$ is separable, the open set $U:=\cup _{x\in \mathcal{C}%
_{m}}U_{x}\subset X\backslash \overline{F}_{m}^{\prime }$ is Lindel\"{o}f
and therefore coincides with the union of countably many $U_{x}.$ As a
result, $f$ is Hadamard differentiable on $U$ except at the points of an
Aronszajn null set.

Lastly, since $U$ contains all the points of continuity of $f$ in $%
X\backslash F_{m}^{\prime },$ the points of $(X\backslash F_{m}^{\prime
})\backslash U$ are the points of discontinuity of $f$ in $X\backslash
F_{m}^{\prime }.$ By Theorem \ref{th5} (iv), these points are contained in
an Aronszajn null set, so that, as claimed, $f$ is Hadamard differentiable
on $X\backslash F_{m}^{\prime }$ except at the points of an Aronszajn null
set.
\end{proof}

A variant of Remark \ref{rm1} may be repeated: Since $X\backslash
F_{m}^{\prime }$ is semi-open (Theorem \ref{th5} (iii)), Theorem \ref{th9}
implies that $f$ is Hadamard differentiable at the points of a dense subset
of $X\backslash F_{m}^{\prime }.$ The next corollary is obvious and settles
the case when $m=-\infty .$

\begin{corollary}
\label{cor10}Let $X$ be a separable Banach space and let $f:X\rightarrow 
\Bbb{R}$ be quasiconvex and densely continuous. If $\mathcal{T}\limfunc{ess}%
\inf_{X}f=-\infty ,$ then $f$ is Hadamard differentiable on $X$ except at
the points of an Aronszajn null set.
\end{corollary}

\section{Hadamard differentiability when $F_{m}$ is nowhere dense\label%
{differentiability1}}

Since the case when $m:=\mathcal{T}\limfunc{ess}\inf_{X}f=-\infty $ was
settled in Corollary \ref{cor10}, it is henceforth assumed that $m>-\infty .$
To avoid giving the impression that this is a restrictive assumption, this
is not recorded in the statement of the results, which are all true when $%
m=-\infty $ (by Corollary \ref{cor10}, or because their assumptions
implicitly rule out $m=-\infty $).

Theorem \ref{th9} answers the differentiability question at points of $%
X\backslash F_{m}^{\prime },$ but it remains to investigate the
differentiability at points of $F_{m}^{\prime }.$ To begin with, we show
that if $x\in F_{m}^{\prime },$ the G\^{a}teaux derivative $Df(x)$ can only
exist if $x\in f^{-1}(m)$ and, if so, that $Df(x)=0.$ In particular, this
shows that when it comes to differentiability, $m$ has the same property as
a pointwise infimum.

\begin{theorem}
\label{th11}Let $X$ be a Banach space and let $f:X\rightarrow \Bbb{R}$ be
quasiconvex.\newline
(i) If $x\in F_{m},$ then $f$ is not G\^{a}teaux differentiable at $x.$%
\newline
(ii) If $x\in f^{-1}(m)$ and $f$ is G\^{a}teaux differentiable at $x,$ the
G\^{a}teaux derivative $Df(x)$ is $0.$
\end{theorem}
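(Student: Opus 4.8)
The plan is to exploit the defining property of $m$ as a topological essential infimum, namely that every lower level set $F_\alpha$ with $\alpha < m$ is of first category (by (\ref{2})). The key geometric fact I would use is this: whenever $f(x) \le m$, every open neighborhood of $x$ must contain a large (second-category) set of points where $f$ takes values strictly below $f(x)$ is \emph{false}, but conversely, points where $f < m$ form only a first-category set. The heart of the matter is that directional behavior at a point of $F_m'$ is constrained because $f$ cannot decrease below $m$ except on a small set, yet quasiconvexity forces $f$ to stay low along segments.

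For part (ii), suppose $x \in f^{-1}(m)$ and $Df(x)$ exists. I would show $Df(x) = 0$ by proving that the directional derivative in every direction $h$ is nonnegative, since then $Df(x)h \ge 0$ for all $h$, forcing $Df(x)h = 0$ by applying the same to $-h$. To see nonnegativity, fix $h \in X$ and consider the difference quotient $\frac{f(x+th) - f(x)}{t} = \frac{f(x+th) - m}{t}$ for $t > 0$. If this were negative for small $t > 0$, then $f(x+th) < m$ on a set of positive $t$-measure, but by quasiconvexity the function $t \mapsto f(x+th)$ is quasiconvex on $\mathbb{R}$, hence the set $\{t : f(x+th) < m\}$ is an interval (or empty). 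The crux is that this interval, viewed through the line $x + \mathbb{R}h$, sits inside the first-category set $F_m$; combined with quasiconvexity along the line, I expect to conclude that $f(x+th) \ge m$ for $t$ on at least one side of $0$, giving a nonnegative one-sided derivative, and then the two-sided G\^ateaux derivative must be $0$.

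For part (i), suppose toward a contradiction that $x \in F_m$ (so $f(x) < m$) and $Df(x)$ exists. I would argue by the openness structure of level sets above $m$. Since $f(x) < m$, pick $\alpha$ with $f(x) < \alpha < m$, so $x \in F_\alpha$, and pick $\beta > m$; by Theorem~\ref{th4}, $\overset{\circ}{F}_\beta \neq \emptyset$, so there is an interior point $y$ of $F_\beta$. The existence of the G\^ateaux derivative gives, in particular, a well-defined finite directional derivative in the direction $y - x$; but I would show that moving from $x$ toward $y$ forces $f$ to jump from below $m$ up to near $\beta > m$ in a way incompatible with a finite linear derivative, because $x \in F_\alpha$ with $\alpha < m$ means $x$ lies in a first-category (indeed, by density-continuity, nowhere dense when $\alpha < m$) set while every neighborhood meets the open set $\overset{\circ}{F}_\beta$ asymmetrically.

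The main obstacle I anticipate is part (i): making the incompatibility rigorous. The cleanest route is probably to show that G\^ateaux differentiability at $x \in F_m$ would force $f$ to be continuous at $x$ along a residual set of directions, contradicting that $x$ sits in the lower level set $F_m$ whose points are separated from the ``bulk'' second-category region $F_\beta$. Concretely, I would look for two directions $h_1, h_2$ (e.g.\ pointing into $\overset{\circ}{F}_\beta$ from $x$) such that the one-sided directional derivatives are forced to have incompatible signs or magnitudes, exploiting that $f$ drops to $\le \alpha < m$ at $x$ but rises to $\ge \beta > m$ arbitrarily close by on an open set. The delicate point is controlling the behavior on \emph{both} sides of $0$ along each line, since G\^ateaux differentiability is a two-sided condition; I expect quasiconvexity along lines (each restriction $t \mapsto f(x+th)$ is quasiconvex, hence unimodal) to be the decisive tool, as it rules out the oscillation that a merely first-category obstruction might otherwise permit.
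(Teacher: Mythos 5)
There is a genuine gap, and it is essentially the same one in both parts: you try to reach a contradiction direction by direction, but the hypothesis that $F_{m}$ is of first category in $X$ puts no constraint at all on the trace of $F_{m}$ on a single line $x+\Bbb{R}h$. In part (ii), from $Df(x)h<0$ you correctly get $(x,x+\delta h)\subset F_{m}$ for some $\delta >0$, but for a fixed $h$ this is not absurd: a first-category convex set can contain segments (in Example \ref{ex1}, $F_{m}=L$ is dense and contains entire subspaces), so you cannot conclude that each individual directional derivative is nonnegative. Your fallback, ``$f(x+th)\geq m$ on at least one side of $0$,'' only yields $Df(x)h\geq 0$ \emph{or} $Df(x)h\leq 0$ depending on which side, which decides nothing. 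The missing idea is to aggregate over directions: if $Df(x)\neq 0$, then \emph{every} $h$ in the open half-space $\Sigma :=\{h:Df(x)h<0\}$ produces such a segment, so (taking $x=0$) $\Sigma \subset \cup _{n\in \Bbb{N}}\,nG$ with $G\subset F_{m}$ of first category, whence $\Sigma $ is of first category --- impossible for an open half-space in a Banach space. That is the paper's proof of (ii), and it uses no quasiconvexity at all.

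For part (i) the paper's argument is the dual of this and far simpler than what you sketch: with $x=0\in F_{m}$, the cone $CF_{m}:=\cup _{\lambda >0}\lambda F_{m}$ equals $\cup _{n\in \Bbb{N}}\,nF_{m}$ (this is where the convexity of $F_{m}$, i.e. the quasiconvexity of $f$, enters) and is therefore of first category; so there exists $h\notin CF_{m}$, and for such $h$ one has $th\notin F_{m}$, i.e. $f(th)\geq m>f(0)$, for all $t>0$, so the right difference quotient tends to $+\infty $ and no G\^{a}teaux derivative can exist. Your sketch for (i) instead invokes Theorem \ref{th4} to obtain $\overset{\circ }{F}_{\beta }\neq \emptyset $ for $\beta >m$, but Theorem \ref{th11} does not assume $f$ densely continuous, so that step is unavailable; and even granting it, the proposed ``incompatible signs or magnitudes of one-sided derivatives'' is never turned into an actual contradiction. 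The decisive tool is not quasiconvexity along individual lines but the Baire category of the cone of ``bad'' directions.
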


\begin{proof}
(i) Since G\^{a}teaux differentiability does not imply continuity, this does
not follow from the fact that $f$ is not continuous at any point of $F_{m}$ (%
\cite[Remark 4.1]{Ra12}).

With no loss of generality, assume that $x=0\in F_{m}.$ By the convexity of $%
F_{m},$ the cone $CF_{m}:=\cup _{\lambda >0}\lambda F_{m}$ is the countable
union $CF_{m}=\cup _{n\in \Bbb{N}}\,nF_{m}$ and hence of first category.
Thus, $X\backslash CF_{m}$ is nonempty and, if $h\in X\backslash CF_{m},$
then $h\neq 0$ (since $0\in F_{m}\subset CF_{m}$) and $th\notin F_{m}$ for
every $t>0.$ As a result, $f(th)\geq m$ when $t>0.$ Since $f(0)<m,$ it
follows that $\lim_{t\rightarrow 0^{+}}\frac{f(th)-f(0)}{t}=\infty ,$ so
that $f$ is not G\^{a}teaux differentiable at $0.$

(ii) Once again, by translation, we may assume that $x=0$ (but now $0\notin
F_{m}$ since $f(0)=m$). By contradiction, assume that $Df(0)\neq 0,$ so that 
$\Sigma :=\{h\in X:Df(0)h<0\}$ is an open half-space. If $h\in \Sigma ,$
then $f(th)<f(0)=m$ for $t>0$ small enough, say $0<t<t_{h}.$ Thus, $%
G:=\{th:h\in \Sigma ,0<t<t_{h}\}\subset F_{m}$ and so $G$ is of first
category. On the other hand, if $h\in \Sigma ,$ then $n^{-1}h\in G$ if $n\in 
\Bbb{\ N}$ and $n^{-1}<t_{h}.$ In other words, $\Sigma \subset \cup _{n\in 
\Bbb{N}}\,nG.$ Since $G$ is of first category, $\Sigma $ is of first
category. Therefore, it cannot be an open half-space and a contradiction is
reached.
\end{proof}

The proof of part (ii) of Theorem \ref{th11} does not use the quasiconvexity
of $f.$ At any rate, Theorem \ref{th11} makes the differentiability of $f$
on $F_{m}^{\prime }$ look like a rather straightforward problem:\ It
suffices to check whether the directional derivative of $f$ at $x\in
f^{-1}(m)$ exists and is $0$ in every direction. However, the evaluation of
the size of the set of such points is not that simple. The remainder of this
section deals with the case when $F_{m}$ is nowhere dense.

If $F_{m}^{\prime }$ is of first category, then $F_{m}^{\prime }$ is nowhere
dense (Theorem \ref{th5} (i)). In particular, $F_{m}$ is nowhere dense and
there is no need to investigate the differentiability of $f$ at the points
of $f^{-1}(m)$ to prove the ``subgeneric'' Hadamard differentiability of $f:$

\begin{theorem}
\label{th12}Let $X$ be a separable Banach space and let $f:X\rightarrow \Bbb{%
R}$ be quasiconvex and densely continuous. With $m:=\mathcal{T}\limfunc{ess}%
\inf_{X}f,$ suppose that $F_{m}^{\prime }$ (i.e., $f^{-1}(m)$) is of first
category. Then, $F_{m}^{\prime }$ is nowhere dense and $f$ is Hadamard
differentiable on the open and dense subset $X\backslash \overline{F}%
_{m}^{\prime }$ except at the points of an Aronszajn null set. \newline
In particular, if $F_{m}^{\prime }$ is contained in an Aronszajn null (Haar
null) set, then $f$ is Hadamard differentiable on $X$ except at the points
of an Aronszajn null (Haar null) set.
\end{theorem}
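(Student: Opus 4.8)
The plan is to assemble the statement entirely from results already established, rather than to prove anything new from scratch; the three assertions follow by chaining Theorem \ref{th5} and Theorem \ref{th9} together with the $\sigma$-ring properties of the null classes. First I would dispose of the claim that $F_{m}^{\prime }$ is nowhere dense: this is literally Theorem \ref{th5} (i), whose hypothesis is exactly that $F_{m}^{\prime }$ (equivalently $f^{-1}(m)$, since $F_{m}$ is always of first category) is of first category, and whose conclusion is that such an $F_{m}^{\prime }$ is automatically nowhere dense. Consequently $\overline{F}_{m}^{\prime }$ is a closed set with empty interior, hence nowhere dense, so $X\backslash \overline{F}_{m}^{\prime }$ is open and dense; this is the ambient domain named in the statement.

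Next, for Hadamard differentiability on $X\backslash \overline{F}_{m}^{\prime }$, I would simply invoke Theorem \ref{th9}, which already asserts that $f$ is Hadamard differentiable on $X\backslash F_{m}^{\prime }$ outside an Aronszajn null set, say $N$. Since $F_{m}^{\prime }\subset \overline{F}_{m}^{\prime }$, we have the inclusion $X\backslash \overline{F}_{m}^{\prime }\subset X\backslash F_{m}^{\prime }$, so the points of $X\backslash \overline{F}_{m}^{\prime }$ at which $f$ fails to be Hadamard differentiable lie in $N\cap (X\backslash \overline{F}_{m}^{\prime })\subset N$, which is Aronszajn null because a (Borel) subset of an Aronszajn null set is Aronszajn null (Subsection \ref{aronszajn}). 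This yields the second assertion with no extra work.

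Finally, for the ``in particular'' clause, assume $F_{m}^{\prime }$ is contained in an Aronszajn null (respectively Haar null) set. By Theorem \ref{th5} (v) this is equivalent to $\overline{F}_{m}^{\prime }$ itself being Aronszajn null (Haar null). Keeping $N$ as above, the set of points of $X$ at which $f$ is \emph{not} Hadamard differentiable is then contained in $N\cup \overline{F}_{m}^{\prime }$: off $\overline{F}_{m}^{\prime }$ every bad point lies in $N$ by the previous paragraph, and on $\overline{F}_{m}^{\prime }$ we merely discard the whole set. In the Aronszajn-null case both $N$ and $\overline{F}_{m}^{\prime }$ are Aronszajn null; in the Haar-null case $N$ is Aronszajn null (hence Haar null) while $\overline{F}_{m}^{\prime }$ is Haar null. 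Since each family forms a $\sigma $-ring, in particular one closed under finite unions (Subsections \ref{aronszajn}, \ref{haar}), the union $N\cup \overline{F}_{m}^{\prime }$ is Aronszajn null (Haar null), which gives the conclusion on all of $X$.

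I do not anticipate a genuine obstacle, since the substance is already carried by Theorems \ref{th5} and \ref{th9}. The only point demanding any care is the bookkeeping in the last paragraph: one must record that the exceptional set from Theorem \ref{th9} really is Aronszajn null, hence a fortiori Haar null, before absorbing $\overline{F}_{m}^{\prime }$ into it via the $\sigma $-ring property, so that the two parenthetical claims (Aronszajn null and Haar null) run in parallel without conflation.
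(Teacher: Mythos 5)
Your proposal is correct and follows essentially the same route as the paper: Theorem \ref{th5} (i) gives that $F_{m}^{\prime }$ is nowhere dense, Theorem \ref{th9} supplies the differentiability off $F_{m}^{\prime }$ (hence off $\overline{F}_{m}^{\prime }$), and the ``in particular'' clause is obtained from Theorem \ref{th5} (v) by absorbing $\overline{F}_{m}^{\prime }$ into the exceptional set via the $\sigma$-ring property. The only cosmetic difference is that the paper re-derives the first-category hypothesis from the null-set containment before citing the first part, whereas you use it as a standing assumption; both are fine.
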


\begin{proof}
As just recalled above, $F_{m}^{\prime }$ is nowhere dense. Thus, $%
X\backslash \overline{F}_{m}^{\prime }$ is open and dense in $X$ and it
suffices to use Theorem \ref{th9}.

If $F_{m}^{\prime }$ is contained in an Aronszajn null (Haar null) set, then 
$\overline{F}_{m}^{\prime }$ is Aronszajn null (Haar null) by Theorem \ref
{th5} (v) and so it has empty interior. Thus, $F_{m}^{\prime }$ is of first
category and the first part applies. That $X\backslash F_{m}^{\prime }$ may
be replaced by $X$ in the first part is obvious.
\end{proof}

We now prove a similar result when $F_{m}^{\prime }$ is of second category.

\begin{theorem}
\label{th13}Let $X$ be a separable Banach space and let $f:X\rightarrow \Bbb{%
R}$ be quasiconvex and densely continuous. With $m:=\mathcal{T}\limfunc{ess}%
\inf_{X}f,$ suppose that $F_{m}^{\prime }$ (i.e., $f^{-1}(m)$) is of second
category and that $F_{m}$ is nowhere dense. Then, $f$ is Hadamard
differentiable on the open and dense subset $X\backslash \overline{F}_{m}$
except at the points of an Aronszajn null set. \newline
In particular, if $\overline{F}_{m}$ is Aronszajn null (Haar null), then $f$
is Hadamard differentiable on $X,$ except at the points of an Aronszajn null
(Haar null) set.
\end{theorem}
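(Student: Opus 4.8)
The plan is to decompose the open, dense set $X\backslash \overline{F}_{m}$ into three pieces, on each of which Hadamard differentiability is either inherited from Theorem \ref{th9}, completely trivial, or confined to an Aronszajn null set. First I would record the standing facts. Since $F_{m}$ is nowhere dense, $\overline{F}_{m}$ has empty interior, so $X\backslash \overline{F}_{m}$ is genuinely open and dense. Since $F_{m}^{\prime }$ is of second category, Theorem \ref{th5} (ii) gives that $W:=\overset{\circ }{F_{m}^{\prime }}=\overset{\circ }{\overline{F}_{m}^{\prime }}$ is nonempty and that $\partial F_{m}^{\prime }$ is nowhere dense and (since $X$ is separable) Aronszajn null. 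Because $\overset{\circ }{\overline{F}_{m}^{\prime }}=W$ and $\partial \overline{F}_{m}^{\prime }=\partial F_{m}^{\prime }$, we have $\overline{F}_{m}^{\prime }=W\cup \partial F_{m}^{\prime }$.

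Next I would set up the decomposition. As $F_{m}\subset F_{m}^{\prime }$, we have $\overline{F}_{m}\subset \overline{F}_{m}^{\prime }=W\cup \partial F_{m}^{\prime }$, and a routine check of the three cases $x\notin \overline{F}_{m}^{\prime }$, $x\in W$, $x\in \partial F_{m}^{\prime }$ shows
\[
X\backslash \overline{F}_{m}=(X\backslash \overline{F}_{m}^{\prime })\cup (W\backslash \overline{F}_{m})\cup (\partial F_{m}^{\prime }\cap (X\backslash \overline{F}_{m})).
\]
Here the first set equals $X\backslash (\overline{F}_{m}^{\prime }\cup \overline{F}_{m})=X\backslash \overline{F}_{m}^{\prime }$, an open subset of $X\backslash F_{m}^{\prime }$.

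I would then treat the three pieces in turn. On $X\backslash \overline{F}_{m}^{\prime }\subset X\backslash F_{m}^{\prime }$, Theorem \ref{th9} yields Hadamard differentiability of $f$ except on an Aronszajn null set. The essential middle piece is the open set $W\backslash \overline{F}_{m}=W\cap (X\backslash \overline{F}_{m})$: there $f\leq m$ (because $W\subset F_{m}^{\prime }$) and simultaneously $f\geq m$ (because the points lie outside $F_{m}$), so $f\equiv m$ on this open set; hence $f$ is Hadamard differentiable at every one of its points with derivative $0$ (consistent with Theorem \ref{th11} (ii)), and no exceptional set is needed. The third piece lies in the Aronszajn null set $\partial F_{m}^{\prime }$ and is absorbed into the exceptional set. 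Since a finite union of Aronszajn null sets is Aronszajn null, $f$ is Hadamard differentiable on $X\backslash \overline{F}_{m}$ except on an Aronszajn null set. For the final assertion, if $\overline{F}_{m}$ is Aronszajn null (respectively Haar null) then it has empty interior, so the first part applies, and adjoining the Borel set $\overline{F}_{m}$ to the exceptional set gives Hadamard differentiability on all of $X$ except on an Aronszajn null (Haar null) set, using that each class forms a $\sigma $-ring.

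The only step demanding genuine care is verifying that $f$ is identically $m$ on $W\backslash \overline{F}_{m}$ — that is, simultaneously exploiting the upper bound from $F_{m}^{\prime }$ and the lower bound from the complement of $F_{m}$; once this is in hand, the argument is bookkeeping combining Theorem \ref{th9} with the ideal properties of the two null-set classes. I therefore do not anticipate a serious obstacle beyond correctly identifying this constancy and the decomposition that isolates it.
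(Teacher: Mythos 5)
Your proposal is correct and follows essentially the same route as the paper: Theorem \ref{th5} (ii) supplies $\overset{\circ }{F_{m}^{\prime }}\neq \emptyset $ and the Aronszajn nullity of $\partial F_{m}^{\prime }$, the observation that $f\equiv m$ on $\overset{\circ }{F_{m}^{\prime }}\backslash \overline{F}_{m}$ handles the interior piece, and Theorem \ref{th9} handles $X\backslash F_{m}^{\prime }$. Your three-piece decomposition is just a slightly more explicit version of the paper's inclusion $X\backslash \overline{F}_{m}\subset (X\backslash F_{m}^{\prime })\cup (F_{m}^{\prime }\backslash \overline{F}_{m})$, and the treatment of the ``in particular'' clause is identical.
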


\begin{proof}
By Theorem \ref{th5} (ii), $\overset{\circ }{F_{m}^{\prime }}\neq \emptyset $
and $\partial F_{m}^{\prime }$ is Aronszajn null. On the other hand, since $%
\overset{\circ }{F_{m}^{\prime }}\neq \emptyset $ and $F_{m}$ is nowhere
dense, $\overset{\circ }{F_{m}^{\prime }}\backslash \overline{F}_{m}$ is
open and dense in $\overset{\circ }{F_{m}^{\prime }}.$ Furthermore, since $%
f=m$ on $F_{m}^{\prime }\backslash F_{m}$ and hence on $\overset{\circ }{%
F_{m}^{\prime }}\backslash \overline{F}_{m},$ it follows that $f$ is
Fr\'{e}chet differentiable with derivative $0$ on $\overset{\circ }{
F_{m}^{\prime }}\backslash \overline{F}_{m}.$ Accordingly, the points of $%
F_{m}^{\prime }\backslash \overline{F}_{m}$ where $f$ is not Hadamard
differentiable are contained in the Aronszajn null set $\partial
F_{m}^{\prime }.$ By Theorem \ref{th9}, the points of $X\backslash
F_{m}^{\prime }$ where $f$ is not Hadamard differentiable are also contained
in an Aronszajn null set and the conclusion follows from $X\backslash 
\overline{F}_{m}\subset (X\backslash F_{m}^{\prime })\cup (F_{m}^{\prime
}\backslash \overline{F}_{m}).$

If $\overline{F}_{m}$ is Aronszajn null (Haar null), it has empty interior,
so that $F_{m}$ is nowhere dense and the first part applies. That $%
X\backslash \overline{F}_{m}$ may be replaced by $X$ in the first part is
obvious.
\end{proof}

By combining Theorem \ref{th12} and Theorem \ref{th13}, we find

\begin{theorem}
\label{th14}Let $X$ be a separable Banach space and let $f:X\rightarrow \Bbb{%
R}$ be quasiconvex and densely continuous. With that $m:=\mathcal{T}\limfunc{
ess}\inf_{X}f,$ suppose that $F_{m}$ is nowhere dense. Then, $f$ is Hadamard
differentiable on an open and dense subset of $X$ except at the points of an
Aronszajn null set. \newline
In particular, if $F_{m}^{\prime }$ is contained in an Aronszajn null (Haar
null) set, or if $F_{m}^{\prime }$ is of second category and $\overline{F}%
_{m}$ is Aronszajn null (Haar null), then $f$ is Hadamard differentiable on $%
X$ except at the points of an Aronszajn null (Haar null) set.
\end{theorem}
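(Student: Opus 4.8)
The plan is to split on the Baire category of $F_m^{\prime}=f^{-1}(m)$: any subset of $X$ is either of first category or of second category, and the two preceding theorems are tailored precisely to these two alternatives. So the whole argument is a dichotomy followed by a citation.

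First I would treat the case in which $F_m^{\prime}$ is of first category. Here the standing hypothesis that $F_m$ is nowhere dense is automatic: by Theorem \ref{th5} (i), $F_m^{\prime}$ is itself nowhere dense, and $F_m\subset F_m^{\prime}$. Theorem \ref{th12} then applies verbatim and yields Hadamard differentiability on the open and dense set $X\backslash\overline{F}_m^{\prime}$ except at the points of an Aronszajn null set. In the complementary case, $F_m^{\prime}$ is of second category; combined with the standing hypothesis that $F_m$ is nowhere dense, this is exactly the hypothesis of Theorem \ref{th13}, which gives Hadamard differentiability on the open and dense set $X\backslash\overline{F}_m$ except at the points of an Aronszajn null set. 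In either case, $f$ is Hadamard differentiable on an open and dense subset of $X$ (namely $X\backslash\overline{F}_m^{\prime}$ or $X\backslash\overline{F}_m$) except on an Aronszajn null set, which is the first assertion.

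For the ``in particular'' clauses I would again match each hypothesis to one of the two cases. If $F_m^{\prime}$ is contained in an Aronszajn null (Haar null) set, then Theorem \ref{th5} (v) forces $\overline{F}_m^{\prime}$ to be Aronszajn null (Haar null), hence to have empty interior, so $F_m^{\prime}$ is nowhere dense and in particular of first category; the corresponding ``in particular'' statement of Theorem \ref{th12} then gives differentiability on all of $X$ except on an Aronszajn null (Haar null) set. If instead $F_m^{\prime}$ is of second category and $\overline{F}_m$ is Aronszajn null (Haar null), this is verbatim the hypothesis of the second part of Theorem \ref{th13}, and that theorem supplies the conclusion directly.

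Since the real content has been isolated in Theorems \ref{th12} and \ref{th13}, there is no genuine obstacle here. The only points requiring care are bookkeeping: checking that the first/second category dichotomy for $F_m^{\prime}$ is exhaustive and aligns with the hypotheses of the two source theorems, and verifying that the two exceptional sets $X\backslash\overline{F}_m^{\prime}$ and $X\backslash\overline{F}_m$ are both genuinely open and dense, so that the cases merge into one clean statement. The passage through Theorem \ref{th5} (v) in the Aronszajn/Haar null clause is the only slightly nontrivial step, but it too has already been established.
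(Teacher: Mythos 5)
Your proposal is correct and is exactly the paper's argument: the paper proves Theorem \ref{th14} simply by ``combining Theorem \ref{th12} and Theorem \ref{th13},'' which is precisely the first/second category dichotomy for $F_{m}^{\prime}$ that you spell out, with each ``in particular'' clause routed to the corresponding clause of the source theorem. Your additional bookkeeping (Theorem \ref{th5} (i) making the nowhere-dense hypothesis automatic in the first case, and Theorem \ref{th5} (v) handling the Aronszajn/Haar null clause) matches what the paper already established inside the proofs of Theorems \ref{th12} and \ref{th13}.
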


The union of two subsets of $X,$ one of which is nowhere dense -not just of
first category- and the other Aronszajn null, has empty interior (its
complement is dense). Every finite -but not countable- union of such sets is
of the same type. Thus, Theorem \ref{th14} not only ensures that $f$ is
Hadamard differentiable on a dense subset of $X$ but also that finitely many
functions satisfying its hypotheses are simultaneously Hadamard
differentiable on the same dense subset of $X.$ Since this need not be true
for countably many functions, this property can only be called subgeneric
(but see Theorem \ref{th15} below).

\begin{remark}
Phelps' example \cite[p. 80]{Ph89} of a G\^{a}teaux differentiable norm on $%
\ell ^{1}$ which is nowhere Fr\'{e}chet differentiable shows that Hadamard
differentiability cannot be replaced by Fr\'{e}chet differentiability in
Theorem \ref{th14}.
\end{remark}

If $f$ is convex and continuous, it follows from \cite{Ar76} that $f$ is
Hadamard differentiable except at the points of an Aronszajn null set. By
Theorem \ref{th14}, this is still true when $f$ is only quasiconvex and
continuous if $F_{m}^{\prime }$ is of second category (because $%
F_{m}=\emptyset $), but false if $F_{m}^{\prime }$ (closed) is of first
category and not Aronszajn null. For instance, \cite[Example 6]{BoGo09} is a
counter-example with $X$ Hilbert. However, when $X$ is reflexive, Theorem 
\ref{th14} still yields a full generalization of Crouzeix's theorem:

\begin{theorem}
\label{th15}Let $X$ be a reflexive and separable Banach space and let $%
f:X\rightarrow \Bbb{R}$ be quasiconvex and densely continuous. With $m:=%
\mathcal{T}\limfunc{ess}\inf_{X}f,$ suppose that $F_{m}$ is nowhere dense.
Then, $f$ is Hadamard differentiable on $X$ except at the points of a Haar
null set.
\end{theorem}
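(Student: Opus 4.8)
The plan is to reduce everything to the two ``in particular'' clauses of Theorem \ref{th14} by splitting the argument according to whether $F_m^{\prime}$ (equivalently $f^{-1}(m)$) is of first or of second Baire category. These two cases are exhaustive, and the point of introducing reflexivity is that in \emph{both} of them the exceptional set can be taken to be the closure of a convex set with empty interior. That is exactly the configuration that Matou\u{s}kov\'{a}'s theorem (Subsection \ref{haar}) turns into Haar nullity when $X$ is reflexive and separable, whereas it is the configuration that resists such a conclusion in a general separable space. The standing hypothesis $F_m$ nowhere dense guarantees that Theorem \ref{th14} is applicable throughout.

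First I would treat the case when $F_m^{\prime}$ is of first category. By Theorem \ref{th5}(i) it is then nowhere dense, so $\overline{F}_m^{\prime}$ is a closed convex set with empty interior. Since $X$ is reflexive and separable, Matou\u{s}kov\'{a}'s result makes $\overline{F}_m^{\prime}$ Haar null, and hence $F_m^{\prime}\subset\overline{F}_m^{\prime}$ is contained in a Haar null set. The first ``in particular'' clause of Theorem \ref{th14} then gives Hadamard differentiability off a Haar null set at once.

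Next I would treat the case when $F_m^{\prime}$ is of second category. Here I use the hypothesis that $F_m$ is nowhere dense: $\overline{F}_m$ is a closed convex set with empty interior, hence Haar null by the same reflexivity argument. The second ``in particular'' clause of Theorem \ref{th14} (which is precisely the statement for $F_m^{\prime}$ of second category together with $\overline{F}_m$ Haar null) then yields the conclusion.

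The proof is short because the substantive differentiability work is already packaged into Theorems \ref{th5}, \ref{th13} and \ref{th14}; the only genuinely new ingredient is reflexivity, and it enters solely through Matou\u{s}kov\'{a}'s result that a closed convex subset with empty interior in a reflexive separable space is Haar null. This is exactly the step with no analogue in an arbitrary separable Banach space, where such a set need not be Haar null, and it is what upgrades the merely subgeneric conclusion of Theorem \ref{th14} to the generic statement claimed here. Since such sets are only guaranteed to be Haar null and not Aronszajn null (Subsection \ref{haar}), one should not expect ``Haar null'' to be improvable to ``Aronszajn null'', which is consistent with the counterexamples announced just after the theorem. I do not anticipate any real obstacle beyond correctly matching each category case to the corresponding clause of Theorem \ref{th14}.
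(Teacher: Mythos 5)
Your proposal is correct and follows essentially the same route as the paper: split on whether $F_{m}^{\prime }$ is of first or second category, use Theorem \ref{th5} (i) in the first case and the hypothesis on $F_{m}$ in the second to produce a closed convex set with empty interior, apply Matou\u {s}kov\'{a}'s theorem to make it Haar null, and finish with the corresponding clause of Theorem \ref{th14}. No discrepancies to report.
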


\begin{proof}
Recall that in reflexive separable Banach spaces, closed convex subsets with
empty interior are Haar null (Subsection \ref{haar}). In particular, since $%
F_{m}$ is nowhere dense, $\overline{F}_{m}$ is Haar null. If $F_{m}^{\prime
} $ is of second category, the result follows directly from Theorem \ref
{th14}. On the other hand, if $F_{m}^{\prime }$ is of first category, then $%
F_{m}^{\prime }$ is actually nowhere dense by Theorem \ref{th5} (i). Thus, $%
\overline{F}_{m}^{\prime }$ is Haar null and Theorem \ref{th14} is once
again applicable.
\end{proof}

Theorem \ref{th15} should be put in the perspective of Theorem \ref{th6}.
When $X=\Bbb{R}^{N},$ every quasiconvex function is densely continuous, $%
F_{m}$ is always nowhere dense and Haar null sets have Lebesgue measure $0,$
so that Crouzeix's theorem is recovered.

Theorem \ref{th15} is not true if $X$ is not reflexive: Every nonreflexive
Banach space contains a closed convex subset $C$ with empty interior which
is not Haar null (\cite{Ma97}). If $f:=\chi _{X\backslash C},$ then $f$ is
quasiconvex and continuous on $X\backslash C$ (open and dense in $X$) and
hence densely continuous. Also, $m=1$ and $F_{1}=C.$ Thus, $f$ is not even
G\^{a}teaux differentiable at any point of $C$ by Theorem \ref{th11}.

\begin{remark}
Just like Theorem \ref{th14}, Theorem \ref{th15} already fails in the convex
case if Hadamard differentiability is replaced by Fr\'{e}chet
differentiability: In \cite{MaMa99}, an example is given of an equivalent
norm on $\ell ^{2}$ which is Fr\'{e}chet differentiable only at the points
of an Aronszajn null set. Obviously, no such set is the complement of a Haar
null set.
\end{remark}

\subsection{Special cases\label{special}}

There are a number of special cases of Theorem \ref{th14} and Theorem \ref
{th15}. In particular, by Theorem \ref{th7}, they are always applicable when 
$f$ is strongly ideally quasiconvex (e.g., usc) or just ideally convex with $%
F_{m}$ ideally convex. More generally, the dense continuity assumption is
satisfied if $f$ is ideally convex (Theorem \ref{th7}) or, as pointed out in
the Introduction, if $f$ is the pointwise limit of continuous functions.
Dense continuity can also be deduced from Theorem \ref{th4} or other
criteria in \cite{Ra12}.

The condition that $F_{m}$ is nowhere dense holds trivially if $\inf_{X}f=m$
(in particular, if $m=-\infty $), for then $F_{m}=\emptyset .$ This happens
when $f$ is usc, but in other cases as well; see for instance Theorem \ref
{th17}. As shown below, $F_{m}$ is nowhere dense -and more- when $f$ is
strictly quasiconvex. Recall that this means $f(\lambda x+(1-\lambda
)y)<\max \{f(x),f(y)\}$ whenever $x\neq y$ and $\lambda \in (0,1).$ It is
not hard to check that $f$ is strictly quasiconvex if and only if it is
quasiconvex and $f(\lambda x+(1-\lambda )y)<f(x)$ whenever $x\neq
y,f(x)=f(y) $ and $\lambda \in (0,1).$ This is also equivalent to saying
that $f$ is quasiconvex and nonconstant on any nontrivial line segment (%
\cite[Theorem 9]{DiAvZa81}).

\begin{corollary}
\label{cor16}Let $X$ be a separable Banach space and let $f:X\rightarrow 
\Bbb{R}$ be strictly quasiconvex and densely continuous. Set $m:=\mathcal{T}%
\limfunc{ess}\inf_{X}f.$ Then, $F_{m}^{\prime }$ is nowhere dense and $f$ is
Hadamard differentiable on a dense open subset of $X$ except at the points
of an Aronszajn null set. Furthermore:\newline
(i) If $F_{m}^{\prime }$ is contained in an Aronszajn null (Haar null) set,
then $f$ is Hadamard differentiable on $X$ except at the points of an
Aronszajn null (Haar null) set.\newline
(ii) If $X$ is reflexive, $f$ is Hadamard differentiable on $X$ except at
the points of a Haar null set.
\end{corollary}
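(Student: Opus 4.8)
The plan is to reduce the entire statement to a single new fact, namely that $F_m^{\prime }=\{f\leq m\}$ is nowhere dense; once this is in hand, all the differentiability conclusions drop out of Theorem \ref{th12}, Theorem \ref{th5}(v) and the reflexive-space fact recorded in Subsection \ref{haar}.

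The one genuinely new point I would prove is that $\overset{\circ }{F_{m}^{\prime }}=\emptyset $, and here is where strict quasiconvexity enters. Suppose not, so that some open ball $B=B(x_{0},r)$ is contained in $F_{m}^{\prime }$, i.e.\ $f\leq m$ throughout $B$. Given any $z\in B$, choose a unit vector $e$ and $\delta >0$ small enough that $z-\delta e\in B$ and $z+\delta e\in B$. These two points are distinct and lie in $F_{m}^{\prime }$, and $z$ is their midpoint, so strict quasiconvexity yields $f(z)<\max \{f(z-\delta e),f(z+\delta e)\}\leq m$, whence $z\in F_{m}$. Since $z\in B$ is arbitrary, $B\subset F_{m}$. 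But $F_{m}$ is of first category (it is the union of countably many $F_{\alpha }$ with $\alpha <m$), while the nonempty open set $B$ is of second category because $X$ is a Baire space. This contradiction forces $\overset{\circ }{F_{m}^{\prime }}=\emptyset $.

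Next I would upgrade empty interior to nowhere dense by invoking Theorem \ref{th5}; this step is not automatic, since in infinite dimensions a convex set may have empty interior yet be dense, and it is exactly the dense continuity hypothesis (built into Theorem \ref{th5}) that excludes this. By Theorem \ref{th5}(ii), were $F_{m}^{\prime }$ of second category we would have $\overset{\circ }{F_{m}^{\prime }}\neq \emptyset $, which we have just ruled out; hence $F_{m}^{\prime }$ is of first category, and Theorem \ref{th5}(i) then gives that $F_{m}^{\prime }$ is nowhere dense.

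Finally I would assemble the conclusions. Since $F_{m}^{\prime }$ is nowhere dense, it is in particular of first category, so Theorem \ref{th12} immediately yields that $f$ is Hadamard differentiable on the open dense set $X\backslash \overline{F}_{m}^{\prime }$ except at the points of an Aronszajn null set, which is the main assertion; its ``in particular'' clause is verbatim part (i). For part (ii), reflexivity is used only through Subsection \ref{haar}: the closed convex set $\overline{F}_{m}^{\prime }$ has empty interior, hence is Haar null, so $F_{m}^{\prime }$ is contained in a Haar null set and part (i) applies (equivalently, $F_{m}\subset F_{m}^{\prime }$ is nowhere dense and Theorem \ref{th15} applies directly). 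The only real obstacle is the interior computation of the second paragraph; everything afterward is bookkeeping against the earlier results.
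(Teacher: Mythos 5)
Your proof is correct, but the step where strict quasiconvexity actually does work is genuinely different from the paper's. The paper proves directly that $f^{-1}(m)$ is of first category: if $F_{m}=\emptyset $, strict quasiconvexity forces $f^{-1}(m)=F_{m}^{\prime }$ to contain at most one point, while if $F_{m}\neq \emptyset $ and (after translation) $0\in F_{m}$, every $x\in f^{-1}(m)$ satisfies $f(x/2)<\max \{f(0),f(x)\}=m$, whence $f^{-1}(m)\subset 2F_{m}$ is of first category; Theorem \ref{th5}(i) then gives that $F_{m}^{\prime }$ is nowhere dense and Theorems \ref{th14} and \ref{th15} finish the job. You instead prove $\overset{\circ }{F_{m}^{\prime }}=\emptyset $ by a local midpoint argument -- any ball contained in $F_{m}^{\prime }$ would be swallowed by $F_{m}$, contradicting the Baire property -- and then pass through Theorem \ref{th5}(ii) (contrapositive) and \ref{th5}(i) to reach ``nowhere dense,'' assembling the conclusions via Theorem \ref{th12} and the Matou\u {s}kov\'{a} fact from Subsection \ref{haar}. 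Both routes are short and sound. Yours avoids the case split on whether $F_{m}$ is empty and uses strict quasiconvexity only at points of $F_{m}^{\prime }$, so it preserves the paper's remark that strict quasiconvexity of $f_{|F_{m}^{\prime }}$ already suffices; the paper's dilation argument buys the slightly sharper containment $f^{-1}(m)\subset 2F_{m}$, identifying $f^{-1}(m)$ itself as first category without needing to route through Theorem \ref{th5}(ii).
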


\begin{proof}
By Theorem \ref{th5} (i), $F_{m}^{\prime }$ is nowhere dense if (and only
if) $f^{-1}(m)$ is of first category. If so, $F_{m}\subset F_{m}^{\prime }$
is also nowhere dense, so that everything follows once again from Theorem 
\ref{th14} and Theorem \ref{th15}.

To complete the proof, we show that $f^{-1}(m)$ is of first category. If $%
F_{m}=\emptyset ,$ the strict quasiconvexity of $f$ shows that $%
f^{-1}(m)=F_{m}^{\prime }$ contains at most one point. If $F_{m}\neq
\emptyset ,$ assume with no loss of generality that $0\in F_{m}.$ If $x\in
f^{-1}(m)$ then $f\left( \frac{1}{2}x\right) <\max \{f(0),f(x)\}=m$ and so $%
\frac{1}{2}x\in F_{m}.$ As a result, $f^{-1}(m)\subset 2F_{m}.$ Since $F_{m}$
is of first category, the same thing is true of $2F_{m}$ and hence of $%
f^{-1}(m).$
\end{proof}

From the above proof, Corollary \ref{cor16} is still true if $%
f_{|F_{m}^{\prime }}$ is strictly quasiconvex. Unlike ideal quasiconvexity,
strict quasiconvexity does not imply dense continuity and must be checked
separately. (The sum of a continuous strictly convex function and a
real-valued nowhere continuous convex function is strictly convex, hence
strictly quasiconvex, but nowhere continuous.)

We now show that (sub)generic Hadamard differentiability can always be
obtained after a rather mild modification of the function $f$ that does not
affect the Hadamard derivative of $f$ at any point where such a derivative
exists.

\begin{theorem}
\label{th17}Let $X$ be a separable Banach space and let $f:X\rightarrow \Bbb{%
R}$ be quasiconvex and densely continuous. Suppose that $m:=\mathcal{T}%
\limfunc{ess}\inf_{X}f>-\infty $ and set $g:=\max \{f,m\}.$ The following
properties hold:\newline
(i) $g$ is quasiconvex and densely continuous and $f=g$ in the residual set $%
X\backslash F_{m}.$\newline
(ii) If $f$ is Hadamard differentiable at $x,$ then $g$ is Hadamard
differentiable at $x$ and $Df(x)=Dg(x).$\newline
(iii) $g$ is Hadamard differentiable on a dense open subset of $X$ except at
the points of an Aronszajn null set. Furthermore:\newline
(iii-{\small 1}) If $F_{m}^{\prime }$ is of first category and contained in
an Aronszajn null (Haar null) set, or if $F_{m}^{\prime }$ is of second
category, then $g$ is Hadamard differentiable on $X$ except at the points of
an Aronszajn null (Haar null) set.\newline
(iii-{\small 2}) If $X$ is reflexive, $g$ is Hadamard differentiable on $X$
except at the points of a Haar null set.
\end{theorem}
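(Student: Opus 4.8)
The plan is to prove Theorem~\ref{th17} by reducing everything about $g:=\max\{f,m\}$ to what is already known about $f$ through Theorems~\ref{th9}, \ref{th14} and~\ref{th15}. The central observation is that $g$ and $f$ agree off $F_m$, since $f(x)\geq m$ exactly when $x\notin F_m$, while on $F_m$ we have $f<m$ and hence $g=m$. Because $F_m$ is always of first category (it is a countable union of the $F_\alpha$ with $\alpha<m$), the set $X\backslash F_m$ is residual, giving the last assertion of~(i). Thus the whole strategy is to show that replacing $f$ by $g$ costs nothing generically and, crucially, that $g$ automatically satisfies the hypothesis ``$F_m$ nowhere dense'' that powers Theorems~\ref{th14} and~\ref{th15}.

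For part~(i), I would first check that the lower level sets of $g$ are convex, hence $g$ is quasiconvex. Writing $G_\alpha:=\{g<\alpha\}$ and $G_\alpha':=\{g\leq\alpha\}$, one has $G_\alpha=F_\alpha$ for $\alpha>m$ and $G_\alpha=\emptyset$ for $\alpha\leq m$, so the topological essential infimum of $g$ is still $m$, but now $G_m=\emptyset$ because $g\geq m$ everywhere. In particular the ``$F_m$ nowhere dense'' hypothesis holds for $g$ trivially, which is exactly what makes parts (iii), (iii-1), (iii-2) available. Dense continuity of $g$ then follows from Theorem~\ref{th4}: condition~(i) there (that $\overset{\circ}{G_\alpha}\neq\emptyset$ for $\alpha>m$) is inherited from $f$ since $G_\alpha=F_\alpha$ there, and condition~(ii) is vacuous for $g$ since $G_\alpha=\emptyset$ for $\alpha<m$. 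Alternatively, I can note directly that $g$ is continuous wherever $f$ is continuous and $f(x)\neq m$, and that $g\equiv m$ on the open set where $f<m$ is dense, so continuity of $g$ on a dense set is transparent.

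For part~(ii), I would argue pointwise at a fixed $x$ where $f$ is Hadamard differentiable. By Theorem~\ref{th11}, if $x\in F_m$ then $f$ is not even G\^ateaux differentiable, so the hypothesis forces $x\notin F_m$; and if $x\in f^{-1}(m)$ then $Df(x)=0$. If $f(x)>m$, then by continuity $f>m$ on a neighborhood of $x$, so $g=f$ there and the Hadamard derivatives coincide. The delicate case is $f(x)=m$: here $g(x)=f(x)=m$, $Df(x)=0$, and for the difference quotient of $g$ one has $\frac{g(x+t_nh_n)-g(x)}{t_n}=\frac{\max\{f(x+t_nh_n),m\}-m}{t_n}=\max\Bigl\{\frac{f(x+t_nh_n)-m}{t_n},0\Bigr\}$, which I would show tends to $0$ using that $\frac{f(x+t_nh_n)-f(x)}{t_n}\to Df(x)h=0$ together with the sign bookkeeping coming from the sign of $t_n$; this is the one genuine estimate and the step I expect to be the main obstacle, since the $\max$ interacts with the sign of $t_n$ and one must verify the limit holds for every admissible pair of sequences $(t_n),(h_n)$.

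Finally, parts (iii), (iii-1), (iii-2) are immediate applications. Since $g$ is quasiconvex, densely continuous, and its own $F_m$ (namely $G_m$) is empty hence nowhere dense, Theorem~\ref{th14} gives (iii) and the Aronszajn/Haar-null refinements in (iii-1), while Theorem~\ref{th15} gives (iii-2) in the reflexive case. For the dichotomy in (iii-1) I would translate the conditions on $F_m^\prime$ back through the identity $G_m^\prime=F_m^\prime$ (both equal $f^{-1}(m)$ together with where $g=m$, which is $F_m^\prime=F_m\cup f^{-1}(m)$; here one should note $G_m^\prime=\{g\leq m\}=F_m^\prime$), so that ``$F_m^\prime$ of first category'' versus ``of second category'' for $f$ is the same dichotomy for $g$, and the hypotheses of the two branches of Theorem~\ref{th14} are matched exactly.
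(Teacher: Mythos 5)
Your proposal is correct and follows essentially the same route as the paper's proof: part (iii) is obtained by applying Theorems~\ref{th14} and~\ref{th15} to $g$ after observing that $G_{m}=\emptyset$ and $G_{m}^{\prime }=F_{m}^{\prime }$, and part (ii) is handled by the same dichotomy ($f(x)>m$ versus $f(x)=m$, the latter forced by Theorem~\ref{th11} with $Df(x)=0$). The one step you flag as a possible obstacle closes in one line, exactly as in the paper: since $g(x)=m$, one has $|g(x+th)-g(x)|=|\max \{f(x+th),m\}-m|\leq |f(x+th)-f(x)|$, so the difference quotient of $g$ is dominated in absolute value by that of $f$ irrespective of the sign of $t$, and no further sign bookkeeping is needed.
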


\begin{proof}
(i) is obvious since every point of continuity of $f$ is a point of
continuity of $g.$ To prove (ii), assume that $f$ is Hadamard differentiable
at $x.$ If $x\notin F_{m}^{\prime },$ then $f$ is continuous at $x$ and so $%
f $ and $g$ coincide on a neighborhood of $x.$ Therefore, $g$ is Hadamard
differentiable at $x$ and $Dg(x)=Df(x).$ If $x\in F_{m}^{\prime },$ it
follows from Theorem \ref{th11} that $f(x)=m$ and that $Df(x)=0.$ Thus, $%
\lim_{t\rightarrow 0}\frac{f(x+th)-f(x)}{t}=0$ uniformly for $h$ in the
compact subsets of $X.$ Also, $g(x)=m$ and $g(x+th)=f(x+th)$ if $x+th\in
F_{m}^{\prime },$ whereas $g(x+th)=m$ if $x+th\in F_{m}.$ As a result, $%
\frac{|g(x+th)-g(x)|}{t}\leq \frac{|f(x+th)-f(x)|}{t}$ and so $%
\lim_{t\rightarrow 0}\frac{g(x+th)-g(x)}{t}=0$ uniformly for $h$ in the
compact subsets of $X.$ This shows that $g$ is Hadamard differentiable at $x$
and that $Dg(x)=0.$

(iii) Clearly, $\mathcal{T}\limfunc{ess}\inf_{X}g=m,$ $G_{m}:=\{x\in
X:g(x)<m\}=\emptyset $ and $G_{m}^{\prime }:=\{x\in X:g(x)\leq
m\}=F_{m}^{\prime },$ so that it suffices to use Theorem \ref{th14} and
Theorem \ref{th15} with $f$ replaced by $g.$
\end{proof}

\subsection{Differentiability of lsc hulls\label{lsc-hulls}}

Earlier, we established that every usc quasiconvex function $f$ is Hadamard
differentiable on a large subset because (a) $f$ is densely continuous and
(b) $F_{m}=\emptyset .$ In contrast, if $f$ is lsc, (a) is still true but
(b) holds only if $\inf_{X}f=m.$ Worse, $F_{m}$ need not be nowhere dense
(see Remark \ref{rm2}). However, as we shall see, this cannot happen by just
passing from a function $f$ to its lsc hull $\underset{-}{f}$ (largest lsc
function majorized by $f$). More specifically, if $f$ is quasiconvex and
densely continuous, we shall see that Theorem \ref{th14} and Theorem \ref
{th15} are simultaneously applicable to $f$ and $\underset{-}{f}$.

Recall that $\underset{-}{f}(x):=\lim \inf_{x^{\prime }\rightarrow
x}f(x^{\prime })=\inf \{\alpha \in \Bbb{R}:x\in \overline{F}_{\alpha }\}.$
Either characterization shows that if $f$ is quasiconvex, then $\underset{-}{
f}$ is quasiconvex, but $\underset{-}{f}$ may achieve the value $-\infty .$
Accordingly, $x\in X$ will be called a point of continuity of $\underset{-}{f%
}$ if $\underset{-}{f}(x)\in \Bbb{R}$ and $\underset{-}{f}$ is continuous at 
$x.$ Equivalently, a point of discontinuity $x$ of $\underset{-}{f}$ is a
point where $\underset{-}{f}(x)=-\infty ,$ or $\underset{-}{f}(x)\in \Bbb{R}$
but $\underset{-}{f}$ is not continuous at $x.$

\begin{lemma}
\label{lm18}Let $f:X\rightarrow \Bbb{R}$ be quasiconvex and densely
continuous and let $\underline{f}$ denote its lsc hull. Set $m:=\mathcal{T}%
\limfunc{ess}\inf_{X}f$ and $G_{m}:=\{x\in X:\underset{-}{f}%
(x)<m\},G_{m}^{\prime }:=\{x\in X:\underset{-}{f}(x)\leq m\}.$ The following
properties hold:\newline
(i) $f$ and $\underset{-}{f}$ have the same points of continuity and $%
\underset{-}{f}=f$ at such points \newline
(ii) $m=\mathcal{T}\limfunc{ess}\inf_{X}\underset{-}{f}.$\newline
(iii) $\overline{G}_{m}=\overline{F}_{m}.$\newline
(iv) $F_{m}^{\prime }\subset G_{m}^{\prime }\subset F_{m}^{\prime }\cup
Z_{m}^{\prime },$ where $Z_{m}^{\prime }$ is of first category and contained
in an Aronszajn null set. In particular, $G_{m}^{\prime }$ is of first
category (contained in an Aronszajn null set, contained in a Haar null set)
if and only if the same thing is true of $F_{m}^{\prime }.$
\end{lemma}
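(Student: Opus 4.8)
The plan is to establish the four assertions of Lemma \ref{lm18} in order, relying on the relation $\underset{-}{f}(x)=\inf\{\alpha:x\in\overline{F}_{\alpha}\}$ and on the density-of-continuity structure already extracted from Theorem \ref{th4}. The central observation throughout is that $f$ and $\underset{-}{f}$ differ only on the (first category) set of discontinuity points of $f$, and that the lower level sets of $\underset{-}{f}$ are the \emph{closures} (up to boundary) of those of $f$.

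\medskip

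For part (i), I would use the two-sided inequality $\underset{-}{f}\le f$ together with lower semicontinuity. If $x$ is a point of continuity of $f$, then $\lim\inf_{x'\to x}f(x')=f(x)$, so $\underset{-}{f}(x)=f(x)\in\Bbb{R}$; upper semicontinuity of $\underset{-}{f}$ at $x$ follows because $f$ is continuous there and $\underset{-}{f}\le f$, giving continuity of $\underset{-}{f}$ at $x$. Conversely, at a point of discontinuity of $f$ one checks that $\underset{-}{f}$ cannot be continuous with a finite value equal to $f(x)$; here the semicontinuity hull and the $\mathcal{F}_\sigma$ structure of the discontinuity set are what I would invoke. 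This pins down that the two functions have identical continuity points and agree there.

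\medskip

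For parts (ii) and (iii), I would compare level sets directly. Since $F_\alpha\subset\{x:\underset{-}{f}(x)<\alpha\}\subset\overline{F}_\alpha$, the first-category/second-category dichotomy of $F_\alpha$ (from Theorem \ref{th4}) is inherited by the corresponding level sets of $\underset{-}{f}$: they have the same closure and hence the same Baire category, so $\mathcal{T}\operatorname{ess}\inf$ is unchanged, giving (ii). For (iii), the inclusions $F_m\subset\overline{G}_m\subset\overline{\overline{F}_m}=\overline{F}_m$ should close the loop after taking closures, using that $G_m$ sits between $F_m$ and $\overline{F}_m$ modulo boundary points.

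\medskip

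Part (iv) is where I expect the real work. The inclusion $F_m'\subset G_m'$ is immediate from $\underset{-}{f}\le f$. The harder containment is $G_m'\subset F_m'\cup Z_m'$ with $Z_m'$ both first category \emph{and} Aronszajn null. I would argue that a point $x\in G_m'\setminus F_m'$ satisfies $\underset{-}{f}(x)\le m<f(x)$, so $x$ is a discontinuity point of $f$ lying above level $m$; that is, $Z_m'\subset A\setminus F_m'$ where $A$ is the discontinuity set. The key is then to \textbf{directly invoke Theorem \ref{th5}(iv)}, which says precisely that the discontinuity points of $f$ outside $F_m'$ are contained in an Aronszajn null set, and that this set is of first category since $A$ is. The main obstacle is verifying carefully that every such $x$ genuinely has $\underset{-}{f}(x)\le m$ forcing $x$ into the union $\cup_{\alpha\in\Bbb{Q},\alpha>m}\partial F_\alpha$ used in that proof, rather than merely being an arbitrary discontinuity point. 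Once $Z_m'\subset A\setminus F_m'$ is established, the final equivalence (first category / Aronszajn null / Haar null for $G_m'$ versus $F_m'$) follows because adjoining a set that is simultaneously first category and Aronszajn null (hence Haar null) to $F_m'$ preserves each of these smallness properties in both directions.
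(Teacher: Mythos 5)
Your treatment of (ii), (iii) and (iv) follows essentially the same route as the paper. In (iv) in particular, the decomposition $Z_{m}^{\prime }=G_{m}^{\prime }\backslash F_{m}^{\prime }\subset A\backslash F_{m}^{\prime }$ (with $A$ the set of points of discontinuity of $f$), followed by an appeal to Theorem \ref{th5} (iv), is exactly the paper's argument, and the ``obstacle'' you worry about is not one: Theorem \ref{th5} (iv) already covers \emph{every} discontinuity point in $X\backslash F_{m}^{\prime }$, so no further verification involving $\cup _{\alpha >m}\partial F_{\alpha }$ is required once you know $Z_{m}^{\prime }\subset A\backslash F_{m}^{\prime }.$ One caution on (ii): the inference ``same closure, hence same Baire category'' is false in general (a dense set of first category and the whole space have the same closure). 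What saves your argument is Theorem \ref{th4}: for $\alpha <m$ the set $F_{\alpha }$ is nowhere dense, so $\overline{F}_{\alpha }$ and everything sandwiched inside it is nowhere dense, while for $\alpha >m$ the level set of $\underset{-}{f}$ contains $F_{\alpha }$ and is a fortiori of second category. The paper's route is cleaner: by (i), $\underset{-}{f}=f$ off a set of first category, and modifying a function on a set of first category does not change its topological essential infimum.

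The genuine gap is the converse half of (i): that continuity of $\underset{-}{f}$ at $x$ (with finite value) forces continuity of $f$ at $x.$ This is false for general functions --- for $f=\chi _{\Bbb{Q}}$ on $\Bbb{R}$ the lsc hull is identically $0,$ hence everywhere continuous, while $f$ is nowhere continuous --- so quasiconvexity and dense continuity must enter in an essential way, and your sketch (``one checks that $\underset{-}{f}$ cannot be continuous with a finite value equal to $f(x)$,'' invoking the $\mathcal{F}_{\sigma }$ structure of the discontinuity set) supplies no mechanism. Note also that what must be excluded is continuity of $\underset{-}{f}$ at $x$ with \emph{any} finite value, not merely with value $f(x).$ The paper does not prove this direction inline either; it cites \cite[Theorem 6.4]{Ra12}. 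As written, your proof of (i) is incomplete at exactly the point where the real work lies.
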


\begin{proof}
(i) is proved in \cite[Theorem 6.4]{Ra12}.

(ii) Since $f$ is densely continuous, its set of points of discontinuity is
of first category. Thus, by (i), $\underset{-}{f}=f$ except on a set of
first category. This proves (ii) since it is readily checked that modifying
a function on a set of first category does not affect its topological
essential infimum.

(iii) is trivial when $m=-\infty $ since $G_{-\infty }=F_{-\infty
}=\emptyset ,$ so assume $m>-\infty .$ First, $F_{m}\subset G_{m}$ since $%
\underset{-}{f}\leq f.$ Next, $G_{m}\subset \overline{F}_{m},$ for if $x\in
G_{m},$ there is a sequence $(x_{n})$ tending to $x$ such that $\lim
f(x_{n})=\underset{-}{f}(x)<m.$ (Alternatively, notice $G_{m}=\cup _{\alpha
<m}\overline{F}_{\alpha }.$)

(iv) That $F_{m}^{\prime }\subset G_{m}^{\prime }$ follows from $\underset{-%
}{f}\leq f.$ Suppose now that $x\in Z_{m}^{\prime }:=G_{m}^{\prime
}\backslash F_{m}^{\prime }.$ Then, $x$ is a point of discontinuity of $f,$
for otherwise $\underset{-}{f}(x)=f(x)$ by (i). This already shows that $%
Z_{m}:=G_{m}^{\prime }\backslash F_{m}^{\prime }$ is of first category. In
addition, by Theorem \ref{th9}, the set of points of discontinuity of $f$ in 
$X\backslash F_{m}^{\prime }$ -and therefore $Z_{m}^{\prime }$- is contained
in an Aronszajn null set. The ``in particular'' part is then obvious.
\end{proof}

From Lemma \ref{lm18}, if $f$ is quasiconvex and densely continuous, the
size requirements about $F_{m}$ and $F_{m}^{\prime }$ in Theorem \ref{th14}
or Theorem \ref{th15} are satisfied if and only if they are satisfied by $%
G_{m}$ and $G_{m}^{\prime }.$ Therefore, these theorems are applicable to $f$
and $\underset{-}{f}$ as soon as they are applicable to either of them (if $%
\underset{-}{f}$ is not real-valued, first replace $f$ by $\arctan f$). If
so, both are Hadamard differentiable on the same dense subset of $X.$ In
addition, whenever $f$ and $\underset{-}{f}$ are Hadamard differentiable at
the same point $x,$ then $D\underset{-}{f}(x)=Df(x).$ Indeed, by Lemma \ref
{lm18} (i), $\underset{-}{f}(x)=f(x),$ so that, since $\underset{-}{f}\leq
f, $%
\begin{equation*}
D\underset{-}{f}(x)h=\lim_{t\rightarrow 0^{+}}\frac{\underset{-}{f}(x+th)-%
\underset{-}{f}(x)}{t}\leq \lim_{t\rightarrow 0^{+}}\frac{f(x+th)-f(x)}{t}
=Df(x)h,
\end{equation*}
for every $h\in X,$ whence $D\underset{-}{f}(x)h=Df(x)h$ upon changing $h$
into $-h.$

When $\underset{-}{f}$ is replaced by the usc hull $\overset{-}{f}$ of $f,$
there is no close relative of Lemma \ref{lm18}, even though $f$ and $%
\overset{-}{f}$ coincide away from a set of first category; see 
\cite[Lemma 5.1]{Ra12}, where it is also shown that $\overset{-}{f}$ is
finite. In particular, the ``optimal'' differentiability properties of $%
\overset{-}{f}$ do not provide any useful information about the
differentiability properties of $f.$

\section{G\^{a}teaux differentiability on a dense subset\label%
{differentiability2}}

We now investigate the differentiability properties of densely continuous
quasiconvex functions without making the additional assumption that $F_{m}$
is nowhere dense. Recall that $F_{m}$ is always of first category and may
fail to be nowhere dense only when $m>-\infty $ and $\dim X=\infty .$
Several special cases when $F_{m}$ is nowhere dense were discussed in
Section \ref{differentiability1}.

If $X$ is a vector space and $C\subset X,$ we set 
\begin{equation}
C^{\ddagger }:=\cup _{\dim Z<\infty }\overline{C\cap Z},  \label{3}
\end{equation}
where $Z$ denotes a vector subspace of $X$ and the closure $\overline{C\cap Z%
}$ is relative to the natural euclidean topology of $Z.$ This means that $%
C^{\ddagger }$ is the sequential closure of $C$ in the finest locally convex
topology on $X$ (\cite[p. 56]{Sc86}) or in any finer topology (such as the
finite topology \cite{KaKl63}), although such characterizations will not
play a role here. Nonetheless, it is informative to point out that, in
general, $(C^{\ddagger })^{\ddagger }\neq C^{\ddagger }$ (see a counter
example in \cite{CiMaNe11}), so that $C^{\ddagger }$ is not the closure of $%
C $ for any topology on $X.$

In this section, the only important (though trivial) features are that $%
C\subset C^{\ddagger }$ and $C^{\ddagger }\subset \overline{C}$ if $X$ is a
normed space, that $C^{\ddagger }$ is convex if $C$ is convex, that $%
Y^{\ddagger }=Y$ for \emph{every} affine subspace $Y$ of $X,$ plus the
following alternative characterization of $X\backslash C^{\ddagger }$ when $%
C $ is convex.

\begin{lemma}
\label{lm19}Let $X$ be a vector space and $C\subset X$ be a convex subset.
Then, $x\in X\backslash C^{\ddagger }$ if and only if, for every $h\in
X\backslash \{0\},$ there is $t_{h}>0$ such that $(x-t_{h}h,x+t_{h}h)\cap
C=\emptyset .$
\end{lemma}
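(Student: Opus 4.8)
The plan is to prove the two implications separately, writing $(\ast)$ for the segment condition: \emph{for every $h\in X\setminus\{0\}$ there is $t_h>0$ with $(x-t_hh,x+t_hh)\cap C=\emptyset$}. Throughout I read the open segment as $\{x+sh:-t_h<s<t_h\}$, so that $x$ is its midpoint. Recall that, by the definition of $C^{\ddagger}$, the relation $x\in C^{\ddagger}$ means precisely that there is a finite-dimensional subspace $Z$ with $x\in Z$ and $x\in\overline{C\cap Z}$ (closure in $Z$); in particular $C\cap Z\neq\emptyset$, and $C\cap Z$ is convex since $C$ is.

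First I would dispatch the implication $x\notin C^{\ddagger}\Rightarrow(\ast)$ by contraposition, the routine half. If $(\ast)$ fails, there is some $h\neq 0$ with $(x-th,x+th)\cap C\neq\emptyset$ for every $t>0$; choosing $c_n\in(x-\tfrac1n h,x+\tfrac1n h)\cap C$ gives a sequence in $C$ with $c_n=x+s_nh$, $|s_n|<1/n$, hence $c_n\to x$. Since each $c_n$ lies in $Z:=\mathrm{span}\{x,h\}$, a finite-dimensional subspace containing $x$, I conclude $x\in\overline{C\cap Z}\subset C^{\ddagger}$, as required.

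The substantive direction is $(\ast)\Rightarrow x\notin C^{\ddagger}$, which I would again prove by contraposition: assuming $x\in C^{\ddagger}$, I must exhibit a single direction $h$ along which $C$ clusters at $x$ \emph{inside one fixed line}. Note first that $(\ast)$ is incompatible with $x\in C$, since $x$ is the midpoint of every open segment $(x-t_hh,x+t_hh)$; so I may assume $x\notin C$. Fix $Z$ finite-dimensional with $x\in\overline{C\cap Z}$. As $C\cap Z$ is a nonempty convex subset of the finite-dimensional space $Z$, its relative interior is nonempty (\cite{Ro70}); pick $y$ in the relative interior of $C\cap Z$ and set $h:=x-y$, which is nonzero because $y\in C$ while $x\notin C$. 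The key input is the finite-dimensional segment property of relative interiors (\cite[Thm.~6.1]{Ro70}): since $y$ is in the relative interior of $C\cap Z$ and $x\in\overline{C\cap Z}$, the half-open segment $\{(1-\lambda)y+\lambda x:0\le\lambda<1\}$ lies in $C\cap Z\subset C$. Rewriting $(1-s)x+sy=x-sh$ shows $x-sh\in C$ for all $s\in(0,1]$, and for $0<s<\min\{t_h,1\}$ the point $x-sh$ lies in the open segment $(x-t_hh,x+t_hh)$; this contradicts $(\ast)$ for that particular $h$, completing the contraposition.

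The whole difficulty is concentrated in this last step: membership $x\in\overline{C\cap Z}$ only supplies approximating points of $C$ arriving from uncontrolled directions in $Z$, whereas $(\ast)$ is a statement about a \emph{single} line through $x$. Convexity is exactly what converts the former into the latter, and the relative-interior segment lemma is the clean device for doing so. The remaining verifications (that $x$ being a midpoint forces $x\notin C$, and the elementary parameter bookkeeping placing $x-sh$ inside the open segment) are routine.
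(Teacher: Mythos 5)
Your proof is correct and follows essentially the same route as the paper's: the easy direction by producing points of $C$ in the two-dimensional subspace $\mathrm{span}\{x,h\}$ converging to $x$, and the substantive direction by choosing a point $y$ in the (nonempty) relative interior of the convex set $C\cap Z$ and invoking Rockafellar's line-segment principle to place the half-open segment from $y$ to $x$ inside $C$, which defeats the segment condition along $h$. The only cosmetic differences are your contrapositive framing (which forces the separate, trivial case $x\in C$, handled in the paper for free since the segment condition already implies $x\notin C$) and the sign convention $h:=x-y$ in place of the paper's $h:=z-x$.
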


\begin{proof}
Suppose first that $x\notin C^{\ddagger },$ so that $x\notin C.$ It suffices
to prove that if $h\in X\backslash \{0\},$ then $x+th\notin C$ for $t>0$
small enough. Indeed, since $x\notin C,$ the same property with $h$ replaced
by $-h$ yields the existence of $t_{h}.$ Now, if $x+t_{n}h\in C$ for some
positive sequence $(t_{n}),$ it is obvious that $x\in \overline{C\cap Z}$
with $Z:=\limfunc{span}\{x,h\},$ so that $x\in C^{\ddagger },$ which is a
contradiction.

Suppose now that for every $h\in X\backslash \{0\},$ there is $t_{h}>0$ such
that $(x-t_{h}h,x+t_{h}h)\cap C=\emptyset .$ In particular, $x\notin C.$ By
contradiction, if $x\in C^{\ddagger },$ there is a finite dimensional
subspace $Z$ such that $x\in \overline{C\cap Z}.$ If so, $C\cap Z\neq
\emptyset ,$ so that the relative interior of $C\cap Z$ is also nonempty. If 
$z$ is any point of this relative interior, then $(x,z]\subset C\cap Z$ (%
\cite[p. 45]{Ro70}). Thus, with $h:=z-x\neq 0,$ it follows that $%
(x-th,x+th)\cap C\supset (x,x+th)\neq \emptyset $ for every $t>0$ and a
contradiction is reached.
\end{proof}

It follows from Lemma \ref{lm19} that if $C$ is convex, then $x\in
C^{\ddagger }$ if and only if there are $h\in X\backslash \{0\}$ and $%
t_{h}>0 $ such that $(x,x+t_{h}h]\subset C.$ (In particular, when $C$ is
convex, $C^{\ddagger }$ is unchanged if $\dim Z\leq 2$ in (\ref{3}).)

From Lemma \ref{lm19}, it is hardly surprising that $C^{\ddagger }$ should
have something to do with G\^{a}teaux differentiability. The relationship is
fully clarified in the proof of the next theorem which, in a weaker form, is
a generalization, of Theorems \ref{th14} and \ref{th15}.

\begin{theorem}
\label{th20}Let $X$ be a separable Banach space and let $f:X\rightarrow \Bbb{%
R}$ be quasiconvex and densely continuous. With $m:=\mathcal{T}\limfunc{ess}%
\inf_{X}f,$ suppose that $F_{m}^{\ddagger }$ has empty interior. Then, $f$
is continuous and G\^{a}teaux differentiable on a dense subset of $X.$ 
\newline
If $X$ is reflexive and $F_{m}^{\ddagger }$ is contained in a Haar null set,
then $f$ is continuous and G\^{a}teaux differentiable on $X$ except at the
points of a Haar null set.
\end{theorem}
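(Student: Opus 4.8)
The plan is to reduce the $G\hat{a}$teaux differentiability of $f$ at a point $x$ to a statement about one-dimensional behavior in each direction, and then use the characterization of $F_{m}^{\ddagger}$ from Lemma \ref{lm19} to produce a large set of points where the required directional derivatives exist and vanish. The key organizing idea is Theorem \ref{th11}: at any point $x \in f^{-1}(m)$ where $f$ is $G\hat{a}$teaux differentiable, the derivative must be $0$, and $f$ is never differentiable on $F_{m}$. So the differentiability problem splits into behavior above level $m$ (handled by Theorem \ref{th9}) and behavior on $f^{-1}(m)$.

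First I would dispose of the region $X \backslash F_{m}'$ using Theorem \ref{th9}: there $f$ is Hadamard (hence $G\hat{a}$teaux) differentiable and continuous off an Aronszajn null set, and by Theorem \ref{th5}(iii) this exhausts a dense subset of $X \backslash F_{m}'$. The heart of the matter is the set $f^{-1}(m) = F_{m}' \backslash F_{m}$. The crucial observation is that if $x \in f^{-1}(m)$ and $x \notin F_{m}^{\ddagger}$, then by Lemma \ref{lm19}, for every direction $h \neq 0$ there is $t_h > 0$ with $(x - t_h h, x + t_h h) \cap F_m = \emptyset$; since $f \geq m$ off $F_m$ and $f(x) = m$, this forces $f(x \pm th) \geq m = f(x)$ for small $t$, so the one-sided difference quotients are nonnegative in both directions, which pins the directional derivative (if it exists) to $0$. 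The plan is to show these directional derivatives genuinely exist for $x$ in a suitable large subset of $f^{-1}(m) \backslash F_{m}^{\ddagger}$ and assemble them into a continuous linear functional (necessarily $0$), yielding $G\hat{a}$teaux differentiability with $Df(x) = 0$.

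To produce the large subset I would invoke the density argument driven by the hypothesis that $F_{m}^{\ddagger}$ has empty interior: its complement is dense, and combined with the semi-openness of $X \backslash F_{m}'$ and the Aronszajn-null exceptional set from Theorem \ref{th9}, one obtains density of the good points. For the reflexive, Haar-null refinement, I would control the bad set as a finite union of an Aronszajn null set (points of discontinuity and non-Hadamard-differentiability above level $m$, via Theorems \ref{th5}(iv) and \ref{th9}) together with a Haar null set coming from $F_{m}^{\ddagger}$ being contained in a Haar null set; here I would use that in reflexive separable spaces closed convex sets with empty interior are Haar null (Subsection \ref{haar}), and that Haar null sets form a $\sigma$-ring, to absorb the exceptional pieces.

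\textbf{The main obstacle} I expect is upgrading the mere \emph{nonnegativity} of one-sided difference quotients (which follows cheaply from Lemma \ref{lm19}) to the genuine \emph{existence} of the two-sided directional derivative equal to $0$ in every direction simultaneously, and then to full $G\hat{a}$teaux differentiability, i.e.\ linearity and continuity of $h \mapsto Df(x)h$. The subtle point is that avoiding $F_{m}^{\ddagger}$ controls each direction \emph{separately}, but $G\hat{a}$teaux differentiability demands a uniform linear structure across all directions at once; the quasiconvexity of $f$ and the convexity of the level sets must be leveraged to glue the directional information together. I anticipate the argument hinges on showing that outside $F_{m}^{\ddagger}$ the local behavior of $f$ near such an $x$ is governed by the convex set $F_m'$ in a way that forces the second-order (squeeze) estimate $0 \leq f(x + th) - m = o(t)$ uniformly enough to deliver an honest zero derivative, and this is where the $\ddagger$-closure rather than the ordinary closure is essential.
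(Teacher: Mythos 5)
Your skeleton matches the paper's: handle $X\backslash F_{m}^{\prime }$ with Theorem \ref{th9}, and use Lemma \ref{lm19} to control each direction at points of $f^{-1}(m)$ lying off $F_{m}^{\ddagger }$. But the step you yourself single out as the main obstacle --- upgrading nonnegativity of the difference quotients to actual \emph{existence} of the directional derivatives --- is left unresolved in your proposal, and the resolution is not the ``second-order squeeze estimate'' you anticipate. The missing idea is to work only at points $x\in \overset{\circ }{F_{m}^{\prime }}\backslash F_{m}^{\ddagger }$; the interior is nonempty by Theorem \ref{th5} (ii) once one reduces to the case where $F_{m}^{\prime }$ is of second category (when $F_{m}^{\prime }$ is of first category it is nowhere dense and Theorem \ref{th14} already gives a stronger conclusion). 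For such $x$ and any $h\neq 0$, Lemma \ref{lm19} gives $t_{h}>0$ with $(x-t_{h}h,x+t_{h}h)\cap F_{m}=\emptyset ,$ and after shrinking $t_{h}$ the whole segment lies in $\overset{\circ }{F_{m}^{\prime }}\backslash F_{m}^{\ddagger }\subset F_{m}^{\prime }\backslash F_{m}=f^{-1}(m).$ Hence $f$ is \emph{constant} equal to $m$ on the segment, so the directional derivative exists trivially and equals $0$ in every direction; there is nothing to glue across directions and no asymptotic estimate is needed. Your version, which only uses $f\geq m$ off $F_{m},$ genuinely fails at points of $f^{-1}(m)$ on $\partial F_{m}^{\prime },$ where the segment can leave $F_{m}^{\prime }$ and $f$ may oscillate above $m$ with no directional derivative; the paper simply excludes such points (they end up in the Aronszajn null set $\partial F_{m}^{\prime }$).

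A second, smaller gap: to produce points where $f$ is \emph{simultaneously} continuous and G\^{a}teaux differentiable, density of $X\backslash F_{m}^{\ddagger }$ is not enough, since a dense set need not meet the residual set of continuity points inside a prescribed ball. The paper uses the convexity of $F_{m}^{\ddagger }$ and Lemma \ref{lm3} to see that $B(y,\varepsilon )\backslash F_{m}^{\ddagger }$ is of second category and therefore contains a continuity point. Your outline of the reflexive, Haar-null refinement is essentially the paper's argument and would be fine once these gaps are filled.
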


\begin{proof}
If $F_{m}^{\prime }$ is of first category, then $F_{m}^{\prime }$ is nowhere
dense by Theorem \ref{th5} (i). Thus, $F_{m}$ is nowhere dense and a
stronger result follows from Theorems \ref{th14} and \ref{th15}. Thus, in
the remainder of the proof, we assume that $F_{m}^{\prime }$ is of second
category. In particular, $m>-\infty .$

Since $F_{m}^{\prime }$ is of second category, $\overset{\circ }{%
F_{m}^{\prime }}\neq \emptyset $ by Theorem \ref{th5} (ii). Let $x\in 
\overset{\circ }{F_{m}^{\prime }}\backslash F_{m}^{\ddagger }$ and $h\in
X\backslash \{0\}$ be given. By Lemma \ref{lm19}, there is $t_{h}>0$ such
that $(x-t_{h}h,x+t_{h}h)\subset X\backslash F_{m}.$ After shrinking $t_{h}$
if necessary, we may assume that $(x-t_{h}h,x+t_{h}h)\subset \overset{\circ 
}{F_{m}^{\prime }}\backslash F_{m}^{\ddagger }.$ Since $\overset{\circ }{%
F_{m}^{\prime }}\backslash F_{m}^{\ddagger }\subset F_{m}^{\prime
}\backslash F_{m}\subset f^{-1}(m),$ it follows that $f=m$ on $%
(x-t_{h}h,x+t_{h}h),$ so that the derivative of $f$ at $x$ in the direction $%
h$ exists and is $0.$ Since $h\in X\backslash \{0\}$ is arbitrary, $f$ is
G\^{a}teaux differentiable at $x$ with $Df(x)=0.$ Thus, $f$ is G\^{a}teaux
differentiable at every point of $\overset{\circ }{F_{m}^{\prime }}%
\backslash F_{m}^{\ddagger }.$

Now, let $x\in \overset{\circ }{F_{m}^{\prime }}$ be given and let $r>0$ be
such that $B(x,r)\subset \overset{\circ }{F_{m}^{\prime }}.$ Since $%
F_{m}^{\ddagger }$ has empty interior, $\overset{\circ }{F_{m}^{\prime }}
\backslash F_{m}^{\ddagger }$ is dense in $\overset{\circ }{F_{m}^{\prime }}%
, $ so that there is $y\in B(x,r)\cap \left( \overset{\circ }{F_{m}^{\prime }%
}\backslash F_{m}^{\ddagger }\right) .$ Let $\varepsilon >0$ be such that $%
B(y,\varepsilon )\subset B(x,r).$ Since $y\notin F_{m}^{\ddagger }$ and $%
F_{m}^{\ddagger }$ is convex, $B(y,\varepsilon )\backslash F_{m}^{\ddagger }$
is of second category by Lemma \ref{lm3}, whereas the set of points of
discontinuity of $f$ is only of first category. As a result, $%
B(y,\varepsilon )\backslash F_{m}^{\ddagger }$ contains a point of
continuity $z$ of $f.$ Therefore, $z\in B(x,r)$ and $f$ is both continuous
and G\^{a}teaux differentiable at $z.$

From the above, $f$ is continuous and G\^{a}teaux differentiable at the
points of a dense subset of $\overset{\circ }{F_{m}^{\prime }}.$ By
convexity, $\overset{\circ }{F_{m}^{\prime }}\neq \emptyset $ is dense in $%
F_{m}^{\prime },$ so that the same subset is also dense in $F_{m}^{\prime }.$
On the other hand, since Hadamard differentiability implies continuity, $f$
is continuous and G\^{a}teaux differentiable at the points of a dense subset
of $X\backslash F_{m}^{\prime }$ by Theorem \ref{th9}. Altogether, this
proves that $f$ is continuous and G\^{a}teaux differentiable at the points
of a dense subset of $X.$

To complete the proof, assume that $X$ is reflexive and that $%
F_{m}^{\ddagger }$ is contained in a Haar null set. In particular, $%
F_{m}^{\ddagger }$ has empty interior. Thus, from the above and Theorem \ref
{th9}, the set of points where $f$ is not G\^{a}teaux differentiable is
contained in a Haar null set. Indeed, such points can only be in $\overset{
\circ }{F_{m}^{\prime }}\cap F_{m}^{\ddagger }$ (contained in a Haar null
set), or in $\partial F_{m}^{\prime }$ (Aronszajn null since $\overset{\circ 
}{F_{m}^{\prime }}\neq \emptyset $) or in $X\backslash F_{m}^{\prime }$ and
contained in an Aronszajn null set.

By Theorem \ref{th6}, the set of points of discontinuity of $f$ is also Haar
null and so the set of points where $f$ is not continuous or not G\^{a}teaux
differentiable is Haar null.
\end{proof}

The counter example to Theorem \ref{th15} given in the previous section also
shows that the reflexivity of $X$ cannot be omitted in the second part of
Theorem \ref{th20}.

\begin{remark}
Theorem \ref{th20} or any of its corollaries below is a statement about the
differentiability properties of $f$ on the entire space $X.$ In particular,
it does not record the fact that a stronger property is true at the points
of $X\backslash F_{m}^{\prime }$ (Theorem \ref{th9}; recall that $%
X\backslash F_{m}^{\prime }$ is always semi-open).
\end{remark}

The set $F_{m}^{\ddagger }$ is often much smaller than $\overline{F}_{m}.$
In the next four corollaries, $F_{m}^{\ddagger }$ (but not necessarily $%
\overline{F}_{m}$) has empty interior, although this is not explicitly
assumed.

\begin{corollary}
\label{cor21}Let $X$ be a separable Banach space and let $f:X\rightarrow 
\Bbb{R}$ be quasiconvex and densely continuous. With $m:=\mathcal{T}\limfunc{
ess}\inf_{X}f,$ suppose that for every finite dimensional subspace $Z$ of $X,
$ there is $\alpha <m$ such that $F_{m}\cap Z\subset \overline{F}_{\alpha }.$
Then, $f$ is continuous and G\^{a}teaux differentiable on a dense subset of $%
X.$ \newline
If $X$ is reflexive, $f$ is continuous and G\^{a}teaux differentiable on $X$
except at the points of a Haar null set.
\end{corollary}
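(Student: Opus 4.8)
The plan is to derive Corollary~\ref{cor21} directly from Theorem~\ref{th20}. Since Theorem~\ref{th20} already delivers both conclusions (dense continuity and G\^{a}teaux differentiability in general, and the Haar null statement when $X$ is reflexive) under the hypothesis that $F_{m}^{\ddagger}$ has empty interior, the entire task reduces to showing that the stated condition---for every finite dimensional subspace $Z$ there is $\alpha<m$ with $F_{m}\cap Z\subset\overline{F}_{\alpha}$---forces $F_{m}^{\ddagger}$ to have empty interior. In fact I expect to prove the stronger and cleaner statement that this hypothesis makes $F_{m}^{\ddagger}$ \emph{nowhere dense}, which certainly implies empty interior.

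\textbf{Main step: bounding $F_{m}^{\ddagger}$ by a union of sets $\overline{F}_{\alpha}$ with $\alpha<m$.}
Recall from \eqref{3} that $F_{m}^{\ddagger}=\cup_{\dim Z<\infty}\overline{F_{m}\cap Z}$. Fix a finite dimensional subspace $Z$ and use the hypothesis to pick $\alpha=\alpha(Z)<m$ with $F_{m}\cap Z\subset\overline{F}_{\alpha}$. Taking closures, $\overline{F_{m}\cap Z}\subset\overline{F}_{\alpha}$ since $\overline{F}_{\alpha}$ is closed. Hence
\begin{equation*}
F_{m}^{\ddagger}=\bigcup_{\dim Z<\infty}\overline{F_{m}\cap Z}\subset\bigcup_{\alpha<m}\overline{F}_{\alpha}.
\end{equation*}
The key observation is that the right-hand side is governed by Theorem~\ref{th4}: since $f$ is densely continuous and $m>-\infty$ (which is implicit, as the hypothesis is vacuous otherwise because $F_{m}=\emptyset$ forces $F_{m}^{\ddagger}=\emptyset$), condition (ii) of Theorem~\ref{th4} gives that each $F_{\alpha}$ with $\alpha<m$ is nowhere dense, so each $\overline{F}_{\alpha}$ is a closed nowhere dense set.

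\textbf{From the covering to empty interior.}
The union $\cup_{\alpha<m}\overline{F}_{\alpha}$ is over an uncountable index set, so I cannot immediately invoke the Baire category theorem. However, the sets $F_{\alpha}$ are linearly ordered by inclusion, so I can replace the uncountable union by a countable one: choose a sequence $\alpha_{n}\uparrow m$ with $\alpha_{n}<m$, and then $\cup_{\alpha<m}F_{\alpha}=\cup_{n}F_{\alpha_{n}}$, whence $\cup_{\alpha<m}\overline{F}_{\alpha}=\cup_{n}\overline{F}_{\alpha_{n}}$ is a countable union of closed nowhere dense sets, i.e.\ of first category and in fact meager with closed pieces. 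Therefore $F_{m}^{\ddagger}$ is contained in a meager $\mathcal{F}_{\sigma}$ set, so it has empty interior. With $F_{m}^{\ddagger}$ shown to have empty interior, Theorem~\ref{th20} applies verbatim and yields both asserted conclusions, the reflexive case included.

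\textbf{Anticipated obstacle.}
The only delicate point is the passage from the uncountable union $\cup_{\alpha<m}\overline{F}_{\alpha}$ to a countable one; this is exactly where the linear ordering of the lower level sets (already exploited for \eqref{2}) is essential, and one must be slightly careful that closure commutes with the countable union here---it does not in general, but it does because the increasing sequence $\overline{F}_{\alpha_{n}}$ exhausts $\cup_{\alpha<m}\overline{F}_{\alpha}$ in the sense that every $\overline{F}_{\alpha}$ with $\alpha<m$ is contained in some $\overline{F}_{\alpha_{n}}$. I would verify that inclusion explicitly rather than assert it, since it is the load-bearing step. Everything else is a direct appeal to Theorem~\ref{th4} and Theorem~\ref{th20}.
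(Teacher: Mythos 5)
Your reduction of the first assertion to Theorem \ref{th20} is correct and is essentially the paper's own argument: from $F_{m}\cap Z\subset\overline{F}_{\alpha(Z)}$ for every finite dimensional $Z$ one gets $F_{m}^{\ddagger}\subset\cup_{\alpha<m}\overline{F}_{\alpha}$, the linear ordering of the level sets collapses this to a countable union of closed nowhere dense sets (the paper indexes by $\alpha\in\Bbb{Q}$ with $\alpha<m$; your increasing sequence $\alpha_{n}\uparrow m$ works just as well, and your verification that every $\overline{F}_{\alpha}$ with $\alpha<m$ sits inside some $\overline{F}_{\alpha_{n}}$ is the right load-bearing check), and a first category set has empty interior. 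So the dense-subset conclusion is fine.

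The reflexive case, however, contains a genuine gap. You state that Theorem \ref{th20} delivers the Haar null conclusion for reflexive $X$ ``under the hypothesis that $F_{m}^{\ddagger}$ has empty interior,'' but that is a misreading: the second part of Theorem \ref{th20} requires $F_{m}^{\ddagger}$ to be \emph{contained in a Haar null set}, and empty interior (or even first category) does not imply this in infinite dimensions --- the incomparability of these smallness notions is a recurring theme of the paper (e.g.\ the nonreflexive counterexample after Theorem \ref{th15} turns on a closed convex set with empty interior that is not Haar null). So ``Theorem \ref{th20} applies verbatim'' fails at precisely the step you did not check. The gap is easy to close with the covering you already produced: each $\overline{F}_{\alpha_{n}}$ is a closed convex set with empty interior, hence Haar null when $X$ is reflexive and separable (Subsection \ref{haar}), and a countable union of Haar null sets is Haar null; therefore $F_{m}^{\ddagger}\subset\cup_{n}\overline{F}_{\alpha_{n}}$ is contained in a Haar null set and the second part of Theorem \ref{th20} does apply. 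This added step is exactly how the paper handles the reflexive case.
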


\begin{proof}
By Theorem \ref{th20}, it suffices to show that $F_{m}^{\ddagger }$ has
empty interior and that it is contained in a Haar null set if $X$ is
reflexive.

From (\ref{3}) and the assumption $F_{m}\cap Z\subset \overline{F}_{\alpha }$
when $\dim Z<\infty ,$ it follows that $F_{m}^{\ddagger }\subset \cup
_{\alpha <m}\overline{F}_{\alpha }.$ Since the sets $F_{\alpha }$ are
linearly ordered by inclusion and nowhere dense when $\alpha <m,$ $\cup
_{\alpha <m}\overline{F}_{\alpha }=\cup _{\alpha <m,\alpha \in \Bbb{Q}}%
\overline{F}_{\alpha }$ is of first category. Thus, $F_{m}^{\ddagger }$ has
empty interior. In addition, if $X$ is reflexive, then $\overline{F}_{\alpha
}$ is Haar null, so that $\cup _{\alpha <m,\alpha \in \Bbb{Q}}\overline{F}
_{\alpha }$ is Haar null.
\end{proof}

\begin{corollary}
\label{cor22}Let $X$ be a separable Banach space and let $f:X\rightarrow 
\Bbb{R}$ be quasiconvex and densely continuous. With $m:=\mathcal{T}\limfunc{
ess}\inf_{X}f,$ suppose that $F_{m}$ is contained in some proper affine
subspace of $X.$ Then, $f$ is continuous and G\^{a}teaux differentiable on a
dense subset of $X.$ \newline
If $X$ is reflexive and $F_{m}$ is contained in some proper Borel affine
subspace, $f$ is continuous and G\^{a}teaux differentiable on $X$ except at
the points of a Haar null set.
\end{corollary}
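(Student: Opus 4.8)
The plan is to reduce everything to Theorem \ref{th20} by verifying its two hypotheses: that $F_m^{\ddagger }$ has empty interior (for the first assertion) and that $F_m^{\ddagger }$ is contained in a Haar null set (for the reflexive assertion). Both should follow quickly from the containment of $F_m$ in a proper affine subspace, using the two elementary facts recorded just before Lemma \ref{lm19}: the operation $C\mapsto C^{\ddagger }$ is monotone (if $C\subset C^{\prime }$ then $C^{\ddagger }\subset (C^{\prime })^{\ddagger }$, immediately from (\ref{3}) since $C\cap Z\subset C^{\prime }\cap Z$ forces $\overline{C\cap Z}\subset \overline{C^{\prime }\cap Z}$), and $Y^{\ddagger }=Y$ for every affine subspace $Y$.

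First I would treat the empty interior claim. Writing $F_m\subset Y$ with $Y$ a proper affine subspace, monotonicity together with $Y^{\ddagger }=Y$ gives $F_m^{\ddagger }\subset Y^{\ddagger }=Y$. A proper affine subspace has empty interior: an interior point would, after translating to the direction space $V=Y-y_0$, force $V$ to contain a ball $B(0,r)$ and hence (being a subspace, so closed under scalar multiplication) to equal $\cup _{\lambda >0}\lambda B(0,r)=X$, contradicting properness. Thus $F_m^{\ddagger }\subset Y$ has empty interior, and the first part of Theorem \ref{th20} yields that $f$ is continuous and G\^{a}teaux differentiable on a dense subset of $X$.

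For the reflexive case the extra input is that a proper Borel affine subspace $Y=y_0+V$ (with $V\neq X$) is Haar null; this is the one genuinely computational step, and the main thing to check. I would test Haar nullity with the one-dimensional measure along a line $\Bbb{R}\xi$ allowed in Subsection \ref{haar}, choosing $\xi \in X\backslash V$. For any $x\in X$, the set $\{t\in \Bbb{R}:t\xi \in x+Y\}$ has at most one element, since two solutions $t_1,t_2$ would give $(t_1-t_2)\xi \in V$ and hence $t_1=t_2$ because $\xi \notin V$; so the line meets each translate $x+Y$ in at most one point, of one-dimensional Lebesgue measure zero. As $Y$ is Borel, this exhibits $Y$ as Haar null, whence $F_m^{\ddagger }\subset Y$ is contained in a Haar null set, and the second part of Theorem \ref{th20} (where reflexivity enters, via Theorem \ref{th6}, to control the points of discontinuity) finishes the proof. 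I do not expect a serious obstacle; the only point requiring care is the subtlety that $Y$, being merely Borel rather than closed, cannot be replaced by its closure (which may be all of $X$), so it is the line-measure argument, not Matou\u{s}kov\'{a}'s theorem on closed convex sets, that is needed here.
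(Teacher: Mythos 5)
Your proposal is correct and follows essentially the same route as the paper: reduce to Theorem \ref{th20} via $F_{m}^{\ddagger }\subset Y^{\ddagger }=Y$ for the proper affine subspace $Y$, and for the reflexive case verify Haar nullity of $Y$ by the one-dimensional line measure $\mu (E)=\lambda _{1}(\Bbb{R}\xi \cap E)$ with $\xi $ chosen so that $\Bbb{R}\xi $ meets each translate of $Y$ in at most one point. The only cosmetic difference is that the paper first disposes of the case $\dim X<\infty $ by citing Crouzeix's theorem, which is inessential.
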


\begin{proof}
With no loss of generality, assume $\dim X=\infty $ (if $\dim X<\infty ,$
Crouzeix's theorem gives a stronger result). Let $Y$ denote a proper affine
subspace of $X$ containing $F_{m}.$ Evidently, $F_{m}^{\ddagger }\subset
Y^{\ddagger }$ and, as noted at the beginning of this section, $Y^{\ddagger
}=Y$ since $Y$ is affine. Since a proper affine subspace of $X$ has empty
interior, $F_{m}^{\ddagger }$ has empty interior, so that Theorem \ref{th20}
applies.

Since $Y\neq X,$ there is $\xi \in X\backslash \{0\}$ such that $\Bbb{R}\xi $
intersects no translate of $Y$ at more than one point. Hence, if $E\subset X$
is Borel and $\mu (E):=\lambda _{1}(\Bbb{R}\xi \cap E)$ where $\lambda _{1}$
is the Lebesgue measure on $\Bbb{R}\xi ,$ then $\mu (x+Y)=0$ for every $x\in
X$ provided that $Y$ is Borel. Thus, if $X$ is reflexive, Theorem \ref{th20}
ensures that $f$ is continuous and G\^{a}teaux differentiable on $X$ except
at the points of a Haar null set.
\end{proof}

An interesting outcome of Corollary \ref{cor22} is:

\begin{corollary}
\label{cor23}Let $X$ be a separable Banach space and let $f:X\rightarrow 
\Bbb{R}$ be even, quasiconvex and densely continuous. Then, $f$ is
continuous and G\^{a}teaux differentiable on a dense subset of $X.$ \newline
If $X$ is reflexive, $f$ is continuous and G\^{a}teaux differentiable on $X$
except at the points of a Haar null set.
\end{corollary}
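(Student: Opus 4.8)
The plan is to reduce the statement to Corollary~\ref{cor22} by showing that, for an even $f$, the set $F_{m}$ always lies in a proper affine subspace of $X$. First I would dispose of the trivial cases: if $\dim X<\infty$ Crouzeix's theorem is stronger, if $m=-\infty$ then $F_{m}=\emptyset$ and Corollary~\ref{cor10} applies, and if $F_{m}=\emptyset$ (for instance when $\inf_{X}f=m$) then $F_{m}$ is nowhere dense, so Theorems~\ref{th14} and~\ref{th15} already give the stronger Hadamard conclusions. Hence I assume $\dim X=\infty$, $m>-\infty$ and $F_{m}\neq\emptyset$.

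The crux is the following observation. Since $f$ is even, $F_{m}$ is symmetric ($x\in F_{m}\Leftrightarrow -x\in F_{m}$), and since $f$ is quasiconvex, $F_{m}$ is convex. If $x\in F_{m}$, then $-x\in F_{m}$ as well, so $0=\tfrac{1}{2}x+\tfrac{1}{2}(-x)\in F_{m}$; thus $0\in F_{m}$ and $F_{m}$ is a balanced convex set. I would then argue, exactly as in the proof of Theorem~\ref{th11}, that the cone $CF_{m}:=\cup_{\lambda>0}\lambda F_{m}=\cup_{n\in\Bbb{N}}nF_{m}$ is in fact a linear subspace of $X$: stability under scalar multiplication is immediate from balancedness, while stability under addition follows from convexity, since $x\in t_{1}F_{m}$ and $y\in t_{2}F_{m}$ force $x+y\in(t_{1}+t_{2})F_{m}$. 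Being a countable union of the first category sets $nF_{m}$, the subspace $CF_{m}$ is of the first category, hence proper by Baire's theorem. As $F_{m}\subset CF_{m}$ and $CF_{m}$ is a proper affine subspace, Corollary~\ref{cor22} yields at once that $f$ is continuous and G\^{a}teaux differentiable on a dense subset of $X$.

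For the reflexive case I would sidestep the Borel hypothesis of Corollary~\ref{cor22} (the subspace $CF_{m}$, like $F_{m}$ itself, need not be Borel) and instead invoke the reflexive part of Theorem~\ref{th20}, for which it suffices to place $F_{m}^{\ddagger}$ inside a Borel Haar null set. Since $CF_{m}$ is a linear subspace, $(CF_{m})^{\ddagger}=CF_{m}$, so monotonicity of $\ddagger$ gives $F_{m}^{\ddagger}\subset CF_{m}$. Writing $F_{m}=\cup_{\alpha\in\Bbb{Q},\,\alpha<m}F_{\alpha}$, I get $CF_{m}\subset N:=\cup_{n\in\Bbb{N}}\cup_{\alpha\in\Bbb{Q},\,\alpha<m}n\overline{F}_{\alpha}$. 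By Theorem~\ref{th4}, each $F_{\alpha}$ with $\alpha<m$ is nowhere dense, so every $n\overline{F}_{\alpha}$ is a closed convex set with empty interior and is therefore Haar null in the reflexive separable space $X$ (Subsection~\ref{haar}); as a countable union, $N$ is an $\mathcal{F}_{\sigma}$ Haar null set. Thus $F_{m}^{\ddagger}\subset N$ sits inside a Haar null set, and Theorem~\ref{th20} gives continuity and G\^{a}teaux differentiability off a Haar null set.

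The main obstacle is exactly this measurability point in the reflexive case: the subspace $CF_{m}$ delivered by the argument is only of the first category and a priori not Borel, so the Borel form of Corollary~\ref{cor22} cannot be applied verbatim; replacing $CF_{m}$ by the $\mathcal{F}_{\sigma}$ superset $N$ assembled from the \emph{closed} sublevel sets $\overline{F}_{\alpha}$ ($\alpha<m$) is what makes the Haar null bound rigorous. The remaining verifications — that $CF_{m}$ is genuinely a linear subspace and that it is of the first category — are routine once the balancedness of $F_{m}$, which is where evenness enters, is in hand.
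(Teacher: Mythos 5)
Your proof is correct and follows essentially the same route as the paper: evenness forces $0\in F_{m},$ so $CF_{m}=\cup _{n}nF_{m}$ is a first-category (hence proper) vector subspace containing $F_{m},$ and Corollary \ref{cor22} gives the dense-subset conclusion. For the reflexive case the paper addresses the measurability issue you flag with the same covering set but a slightly different bookkeeping: it observes that $\cup _{\alpha <m,\alpha \in \Bbb{Q}}\cup _{n}n\overline{F}_{\alpha }$ (your set $N$) is a \emph{Borel proper vector subspace} containing $F_{m}$ and feeds it back into Corollary \ref{cor22}, whereas you verify directly that $N$ is Haar null via Matou\u{s}kov\'{a}'s theorem (Subsection \ref{haar}) and invoke Theorem \ref{th20}; both arguments are valid.
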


\begin{proof}
If $m:=\mathcal{T}\limfunc{ess}\inf_{X}f=-\infty ,$ a stronger property is
proved in Corollary \ref{cor10}. Accordingly, we only give a proof when $%
m>-\infty $ and $F_{m}\neq \emptyset $ since Theorems \ref{th14} and \ref
{th15} yield a better result when, more generally, $F_{m}$ is nowhere dense.

Since $f$ is even, $F_{m}=-F_{m}$ and so $0\in F_{m}$ by the convexity of $%
F_{m}\neq \emptyset .$ As a result, the convex cone $CF_{m}:=\cup _{\lambda
>0}\lambda F_{m}$ is the countable union $CF_{m}=\cup _{n\in \Bbb{N}%
}\,nF_{m} $ (of first category) and $CF_{m}=-CF_{m},$ so that $CF_{m}$ is a
vector subspace of $X$ of first category. Thus, $CF_{m}\neq X.$ Since $%
F_{m}\subset CF_{m},$ Corollary \ref{cor22} ensures that $f$ is continuous
and G\^{a}teaux differentiable on a dense subset of $X.$

If $X$ is reflexive, the result also follows from Corollary \ref{cor22} if
we show that $F_{m}$ is contained in a Borel proper (vector) subspace of $X.$
This can be seen by a modification of the above argument. Specifically, $%
F_{\alpha }=-F_{\alpha }$ for every $\alpha \in \Bbb{R},$ not just $\alpha
=m,$ whence $\overline{F}_{\alpha }=-\overline{F}_{\alpha }$ and $C\overline{%
F}_{\alpha }:=\cup _{\lambda >0}\lambda \overline{F}_{\alpha }=\cup _{n\in 
\Bbb{N}}\,n\overline{F}_{\alpha }.$ Thus, $C\overline{F}_{\alpha }$ is a
Borel vector subspace of $X$ for every $\alpha \in \Bbb{R}$ such that $%
F_{\alpha }\neq \emptyset $ and $C\overline{F}_{\alpha }=\emptyset $
otherwise.

If $\alpha <m,$ then $F_{\alpha }$ is nowhere dense (Theorem \ref{th4}),
whence $C\overline{F}_{\alpha }$ is of first category. The subspaces $C%
\overline{F}_{\alpha }$ are linearly ordered by inclusion and so $\cup
_{\alpha <m}C\overline{F}_{\alpha }=\cup _{\alpha <m,\alpha \in \Bbb{Q}}C%
\overline{F}_{\alpha }$ is a Borel vector subspace of $X$ of first category (%
$F_{m}\neq \emptyset $ ensures that $F_{\alpha }\neq \emptyset $ for some $%
\alpha <m,$ so that $C\overline{F}_{\alpha }$ is a subspace). Obviously, $%
F_{m}=\cup _{\alpha <m}F_{\alpha }\subset \cup _{\alpha <m}\overline{F}%
_{\alpha }\subset \cup _{\alpha <m}C\overline{F}_{\alpha }.$ This completes
the proof.
\end{proof}

Below, we show that differentiability on a dense subset is still true when,
more generally, $f$ is invariant under the action of a linear periodic map $%
T $ with no fixed point. By this, we mean that $f(Tx)=f(x)$ where $T\in 
\mathcal{L}(X),$ $T^{p}=I$ for some integer $p>1$ and $1$ is not an
eigenvalue of $T,$ so that $Tx=x$ if and only if $x=0.$ In Corollary \ref
{cor23}, $p=2$ and $T=-I.$

\begin{corollary}
\label{cor24}Let $X$ be a separable Banach space and let $f:X\rightarrow 
\Bbb{R}$ be quasiconvex, densely continuous and invariant under the action
of a linear periodic map $T$ with no fixed point. Then, $f$ is continuous
and G\^{a}teaux differentiable on a dense subset of $X.$ \newline
If $X$ is reflexive, $f$ is continuous and G\^{a}teaux differentiable on $X$
except at the points of a Haar null set.
\end{corollary}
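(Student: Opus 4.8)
The plan is to reduce everything to Corollary~\ref{cor22}, exactly as in the proof of Corollary~\ref{cor23}, replacing the symmetry $F_m=-F_m$ (which is special to the case $T=-I$, $p=2$) by a symmetry extracted from the periodicity of $T$ through an orbit-averaging argument. First I would dispose of the trivial cases as in Corollary~\ref{cor23}: if $m:=\mathcal{T}\limfunc{ess}\inf_X f=-\infty$, Corollary~\ref{cor10} already gives a stronger conclusion, and if $F_m$ is nowhere dense, Theorems~\ref{th14} and \ref{th15} do; so I may assume $m>-\infty$ and $F_m\neq\emptyset$.

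The one genuinely new ingredient is the following. Since $T^{p}=I$, $T$ is invertible with $T^{-1}=T^{p-1}$ bounded, so $T$ is a linear homeomorphism, and $f(Tx)=f(x)$ forces every lower level set to be $T$-invariant: $TF_\alpha=F_\alpha$, whence $T\overline{F}_\alpha=\overline{TF_\alpha}=\overline{F}_\alpha$. I would then record a short lemma: \emph{if $C\subset X$ is nonempty, convex and $T$-invariant, then $0\in C$ and $\cup_{\lambda>0}\lambda C$ is a vector subspace.} To see this, pick $x\in C$; the orbit $x,Tx,\dots,T^{p-1}x$ lies in $C$, so by convexity the average $y:=\frac{1}{p}\sum_{k=0}^{p-1}T^{k}x\in C$. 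The cyclic shift together with $T^{p}=I$ gives $Ty=y$, and since $1$ is not an eigenvalue of $T$ this forces $y=0$, so $0\in C$ and $\sum_{k=0}^{p-1}T^{k}x=0$. Rewriting this as $-x=(p-1)\bigl(\frac{1}{p-1}\sum_{k=1}^{p-1}T^{k}x\bigr)$ exhibits $-x$ as a positive multiple of a convex combination of points of $C$, i.e. $-x\in(p-1)C$. Hence $-C\subset\cup_{\lambda>0}\lambda C$, so this convex cone is symmetric and contains $0$, i.e. it is a vector subspace.

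I would then apply the lemma twice. Applied to $C=F_m$, it gives $F_m\subset CF_m=\cup_{n\in\Bbb{N}}nF_m$, a vector subspace which is of first category (a countable union of copies of the first-category set $F_m$) and hence \emph{proper}; Corollary~\ref{cor22} then yields that $f$ is continuous and G\^ateaux differentiable on a dense subset of $X$. For the reflexive case I would build a \emph{Borel} proper subspace containing $F_m$, as in Corollary~\ref{cor23}: applied to each $C=\overline{F}_\alpha$ with $\alpha<m$ (nonempty since $F_m\neq\emptyset$), the lemma shows $0\in\overline{F}_\alpha$ and that $C\overline{F}_\alpha=\cup_{n}n\overline{F}_\alpha$ is a vector subspace, which is Borel (an $\mathcal{F}_\sigma$) and, because $\overline{F}_\alpha$ is nowhere dense for $\alpha<m$ by Theorem~\ref{th4}, is of first category. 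The $\overline{F}_\alpha$ are linearly ordered by inclusion, so $\cup_{\alpha<m,\,\alpha\in\Bbb{Q}}C\overline{F}_\alpha$ is an increasing union of subspaces, hence a Borel vector subspace of first category, hence proper; it contains $F_m=\cup_{\alpha<m}F_\alpha$, and the reflexive half of Corollary~\ref{cor22} gives continuity and G\^ateaux differentiability on $X$ off a Haar null set.

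I expect no serious obstacle, since the argument runs parallel to Corollary~\ref{cor23}; the only point needing care is the averaging step, namely verifying that the fixed point $y$ is forced to be $0$ (this is precisely where the hypothesis that $1$ is not an eigenvalue of $T$ is used) and that the multiplier $p-1$ is positive (which needs $p>1$). The relation $-C\subset\cup_{\lambda>0}\lambda C$ it produces is exactly the general-$p$ analogue of the symmetry $F_m=-F_m$ exploited in the even case.
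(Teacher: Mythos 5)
Your proposal is correct, and its overall architecture coincides with the paper's: both dispose of $m=-\infty$ and of $F_{m}$ nowhere dense via Corollary \ref{cor10} and Theorems \ref{th14} and \ref{th15}, and both then reduce to Corollary \ref{cor22} by showing that $CF_{m}:=\cup _{\lambda >0}\lambda F_{m}$ (and, for the reflexive half, the increasing union $\cup _{\alpha <m,\alpha \in \Bbb{Q}}C\overline{F}_{\alpha }$) is a proper, first-category, Borel vector subspace containing $F_{m}.$ Where you genuinely diverge is in the proof of the key symmetry property of a nonempty convex $T$-invariant set $C.$ The paper shows that the convex hull $C_{x}$ of the orbit $x,Tx,\dots ,T^{p-1}x$ is a neighborhood of $0$ in its span $Y_{x},$ via a surjectivity-and-openness argument for a linear map $\Lambda :\Bbb{R}^{p}\rightarrow Y_{x}$ restricted to the hyperplane of zero-sum coefficients, and then invokes the absorbing property of neighborhoods of $0$ to produce some $x$-dependent $\varepsilon >0$ with $-\varepsilon x\in C_{x}\subset C.$ You bypass all of this: from $\Sigma _{k=0}^{p-1}T^{k}x=py=0$ you read off $-x=\Sigma _{k=1}^{p-1}T^{k}x,$ so that $-x/(p-1)$ is the average of the $p-1$ points $Tx,\dots ,T^{p-1}x\in C$ and hence $-x\in (p-1)C,$ with the explicit uniform constant $\varepsilon =1/(p-1).$ This is shorter, entirely elementary, and marginally stronger than what the paper establishes (and all that is needed). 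The remaining bookkeeping -- that a symmetric convex cone containing $0$ is a vector subspace, the first-category and Borel verifications, and the appeal to Corollary \ref{cor22} -- matches the proof of Corollary \ref{cor23}, exactly as the paper's own proof of the present corollary prescribes.
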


\begin{proof}
We only show how the proof of Corollary \ref{cor23} can be modified to yield
the desired result. The key point of that proof is that, when $m>-\infty $
and $F_{m}\neq \emptyset ,$ the cone $CF_{m}$ is both of first category and
a vector subspace of $X.$ That $CF_{m}$ is of first category is true
irrespective of any symmetry. In the proof of Corollary \ref{cor23}, that $%
CF_{m}$ is a vector subspace of $X$ is an immediate by-product of $%
F_{m}=-F_{m},$ but a routine check reveals that this remains true if, for
every $x\in F_{m},$ there is $\varepsilon >0$ (possibly depending upon $x$)
such that $-\varepsilon x\in F_{m}.$ The proof that this holds under the
hypotheses of the corollary is given below, with $F_{m}$ replaced by any
nonempty convex subset $E\subset X$ invariant under $T$ (i.e., $T(E)\subset
E $).

Since the existence of $\varepsilon $ is obvious if $x=0,$ assume $x\neq 0.$
With the notation introduced before the corollary, it is clear that $%
x,Tx,...,T^{p-1}x\in E.$ Furthermore, since $T^{p}=I,$ the point $%
y:=x+Tx+\cdots +T^{p-1}x$ is a fixed point of $T,$ (i.e., $Ty=y$) so that $%
y=0$ since $T$ has no other fixed point. As a result, $0=\frac{1}{p}x+\frac{1%
}{p}Tx+\cdots +\frac{1}{p}T^{p-1}x$ and hence $0\in C_{x}:=\limfunc{ conv}
\{x,...,T^{p-1}x\}\subset E.$

We claim that $C_{x}$ is a neighborhood of $0$ in $Y_{x}:=\limfunc{span}%
\{x,...,T^{p-1}x\}.$ Indeed, the linear mapping $\Lambda $ on $\Bbb{R}^{p}$
defined by $\mathbf{\lambda }:=(\lambda _{0},...,\lambda _{p-1})\in \Bbb{R}
^{p}\mapsto \Lambda \mathbf{\lambda }:=\Sigma _{j=0}^{p-1}\lambda
_{j}T^{j}x\in Y_{x}$ is surjective and $\ker \Lambda $ contains $p^{-1}%
\mathbf{1,}$ where $\mathbf{1}:=(1,...,1).$ Since the subspace $Z:=\left\{ 
\mathbf{\lambda }\in \Bbb{R}^{p}:\Sigma _{j=0}^{p-1}\lambda _{j}=0\right\} $
does not contain $p^{-1}\mathbf{1},$ it follows that $\Lambda :Z\rightarrow
Y_{x}$ is still surjective. Let $U\subset Z$ denote the neighborhood of $0$
defined by $U:=\{\mathbf{\lambda }\in Z:|\lambda _{j}|<p^{-1}\}.$ Since
linear surjective maps are open, $\Lambda (U)$ is an open neighborhood of $0$
in $Y_{x}.$ Now, $\Lambda (U)=\Lambda (U+p^{-1}\mathbf{1)}$ and, if $\mathbf{%
\ \ \ \ \ \mu }\in U+p^{-1}\mathbf{1,}$ then $\mathbf{\mu }=\mathbf{\lambda }%
+p^{-1}\mathbf{1}$ with $\mathbf{\ \lambda }\in U,$ so that $\mu _{j}>0$ and 
$\Sigma _{j=0}^{p-1}\mu _{j}=\Sigma _{j=0}^{p-1}\left( \lambda
_{j}+p^{-1}\right) =1.$ Thus, every vector in the open neighborhood $\Lambda
(U)$ of $0$ in $Y_{x}$ is a convex combination of $x,...,T^{p-1}x$ and
therefore in $C_{x}.$ This proves the claim.

Since $-x\in Y_{x}$ and every neighborhood of $0$ in $Y_{x}$ is absorbing,
it follows that $-\varepsilon x\in C_{x}\subset E$ for some $\varepsilon >0,$
as had to be proved.

The continuity of $T$ implies that $\overline{F}_{\alpha }$ (convex) is
invariant under $T$ for every $\alpha \in \Bbb{R}.$ Thus, from the above, $C%
\overline{F}_{\alpha }$ is a vector subspace of $X$ whenever $F_{\alpha
}\neq \emptyset .$ As a result, the proof of Corollary \ref{cor23} can also
be repeated when $X$ is reflexive.
\end{proof}

Corollaries \ref{cor23} and \ref{cor24} remain true when $f$ is replaced by
its lsc hull $\underset{-}{f}$ which, as is easily checked, exhibits the
same symmetry as $f.$ Also, if Theorem \ref{th20} is applicable to $%
\underset{-}{f},$ it is applicable to $f$ by Lemma \ref{lm18} (ii) since $%
F_{m}\subset G_{m}:=\{x\in X:\underset{-}{f}(x)<m\}$ is obvious from $%
\underset{-}{f}\leq f.$ However, there seems to be no reason why the
converse should be true (compare with Subsection \ref{lsc-hulls}).

\section{An example\label{example}}

We give an example of a function $f$ satisfying all the assumptions of
Corollaries \ref{cor21} and \ref{cor23} (hence of Corollaries \ref{cor22}
and \ref{cor24} as well, because Corollary \ref{cor23} is a special case of
both). This example was used in \cite{Ra12} to produce a nontrivial densely
continuous quasiconvex function. It will also show that the
differentiability properties of $f$ at the points of $F_{m}^{\prime }$ are
generally much weaker when $F_{m}$ is not nowhere dense than they are when $%
F_{m}$ is nowhere dense.

\begin{example}
\label{ex1}Let $X$ be an infinite dimensional separable Banach space and let 
$(x_{n})_{n\in \Bbb{N}}\subset X$ be a dense sequence. Set $L_{n}:=\limfunc{
span}\{x_{1},...,x_{n}\},$ so that $L:=\cup _{n\in \Bbb{N}}L_{n}$ is a dense
subspace of $X$ and $L$ is of first category since $\dim L_{n}<\infty .$
After passing to a subsequence, we may assume $L_{n}\varsubsetneq L_{n+1}$
without changing $L.$ Now, let $(\alpha _{n})\subset \Bbb{R}$ be a strictly
increasing sequence such that $\lim_{n\rightarrow \infty }\alpha _{n}=1.$
Set $f(x)=1$ if $x\in X\backslash L,f(x)=\alpha _{1}$ if $x\in L_{1}$ and $%
f(x)=\alpha _{n}$ if $x\in L_{n}\backslash L_{n-1},n\geq 2.$
\end{example}

In Example \ref{ex1}, $m:=\mathcal{T}\limfunc{ess}\inf_{X}f=1,F_{\alpha }=X$
if $\alpha >1,F_{1}=L$ is convex of first category but everywhere dense and $%
F_{1}^{\prime }=X.$ Also, if $\alpha _{1}<\alpha <1,$ there is a unique $%
n\in \Bbb{N}$ such that $\alpha _{n}<\alpha \leq \alpha _{n+1}$ and so $%
F_{\alpha }=L_{n}$ is (closed, convex and) nowhere dense. If $\alpha \leq
\alpha _{1},$ then $F_{\alpha }=\emptyset .$ In particular, $f$ is
quasiconvex.

By Theorem \ref{th4} (or because $f$ is lsc; see Remark \ref{rm2} below), $f$
is densely continuous. Actually, a direct verification shows that the set of
points of discontinuity of $f$ is exactly $L.$ If $x\notin L$ and $h\in
X\backslash \{0\},$ then $x+th\in L$ if and only if $x+th\in L_{n}$ for some 
$n.$ This can only happen if $t\neq 0$ and for at most one $t.$ Indeed, if $%
x+th\in L_{n_{1}}$ and $x+sh\in L_{n_{2}}$ with $s\neq t$ and (say) $%
n_{2}\geq n_{1},$ then $x+sh-x-th=(s-t)h\in L_{n_{2}},$ so that $h\in
L_{n_{2}}.$ But then, $sh\in L_{n_{2}}$ and so $x\in L_{n_{2}}\subset L,$
which is a contradiction.

The above property shows that if $x\notin L$ and $h\in X\backslash \{0\},$
there is $t_{h}>0$ such that $(x-t_{h}h,x+t_{h}h)\subset X\backslash L.$
Since $f=1$ on $X\backslash L,$ it follows that $f$ is also G\^{a}teaux
differentiable at every point of $X\backslash L$ (and $Df(x)=0$). Note that $%
L$ has countable dimension, so that it is Borel and Haar null regardless of
the reflexivity of $X.$

Now, it is plain that $f$ is even, so that it satisfies the hypotheses of
Corollary \ref{cor23}. To see that it also satisfies the hypotheses of
Corollary \ref{cor22}, let $Z$ be a finite dimensional subspace of $X$ and
let $x\in F_{1}\cap Z=L\cap Z$ (recall $m=1$). Obviously, $L\cap Z=\cup
_{n\in \Bbb{N}}(L_{n}\cap Z)$ and $L_{n}\cap Z$ is a nondecreasing sequence
of subspaces of $Z.$ Since $\dim Z<\infty ,$ it follows that $L_{n}\cap Z$
is independent of $n$ large enough, say $n\geq n_{0}$ and so $L\cap
Z=L_{n_{0}}\cap Z.$ Thus, $x\in L_{n_{0}}$ and so $f(x)\leq \alpha
_{n_{0}}<1=m.$

\begin{remark}
\label{rm2}In Example \ref{ex1}, $F_{\alpha }^{\prime }=X$ if $\alpha \geq %
1,F_{\alpha }^{\prime }=L_{n}$ if $\alpha _{1}\leq \alpha <1,$ where $n\in 
\Bbb{N}$ is the unique integer such that $\alpha _{n}\leq \alpha <\alpha
_{n+1}$ and $F_{\alpha }^{\prime }=\emptyset $ if $\alpha <\alpha _{1}.$
Thus, $F_{\alpha }^{\prime }$ is always closed, so that $f$ is lsc. Since $X$
and all its finite dimensional subspaces are ideally convex, this also shows
that $f$ is ideally quasiconvex (Definition \ref{def1}), but $F_{1}$ is
dense with empty interior, so that $F_{1}$ is not ideally quasiconvex; see
the comments before Theorem \ref{th7}. Thus, $f$ is not strongly ideally
quasiconvex.
\end{remark}

In general, when $m>-\infty $ and $F_{m}^{\prime }$ is of second category,
then $\overset{\circ }{F_{m}^{\prime }}\neq \emptyset $ is dense in $%
F_{m}^{\prime }$ (Theorem \ref{th5} (ii)). If also $F_{m}$ is nowhere dense,
then $f=m$ on the open set $\overset{\circ }{F_{m}^{\prime }}\backslash 
\overline{F}_{m}$ dense in $F_{m}^{\prime },$ so that $f$ is Fr\'{e}chet
differentiable with derivative $0$ on an open and dense subset of $%
F_{m}^{\prime }.$ This was used in the proof of Theorem \ref{th13}.

In the above example, $m=1$ and $F_{1}^{\prime }=X$ is of second category.
Since $F_{1}=L$ is dense in $X$ and $f$ is not G\^{a}teaux differentiable at
any point of $F_{1}$ (Theorem \ref{th11}), $f$ cannot be Fr\'{e}chet
differentiable on an open and dense subset of $F_{1}^{\prime }$ but, since
it is continuous and G\^{a}teaux differentiable on the residual subset $%
f^{-1}(1)=X\backslash L,$ it could conceivably be Fr\'{e}chet differentiable
on this set, or perhaps on a smaller but still residual subset. Below, we
show not only that this is not the case, but even that the set of points of $%
X\backslash L$ where $f$ is Hadamard differentiable is of first category.
This establishes that, in general, the differentiability properties at the
points of $f^{-1}(m)$ are indeed weaker when $F_{m}$ is not nowhere dense
than they are when $F_{m}$ is nowhere dense.

The notation of Example \ref{ex1} is used in the next lemma. Also, $%
d(x,L_{n})$ denotes the distance from $x\in X$ to the subspace $L_{n}.$

\begin{lemma}
\label{lm25}Let $(\beta _{n})_{n\in \Bbb{N}}\subset (0,\infty )$ be any
sequence. The set $W:=\{x\in X:d(x,L_{n})<\beta _{n}$ for infinitely many
indices $n\in \Bbb{N}\}$ is residual in $X.$
\end{lemma}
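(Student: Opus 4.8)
The plan is to realize $W$ as a countable intersection of dense open sets and invoke the Baire category theorem. First I would introduce, for each $n\in\Bbb{N}$, the set $U_n:=\{x\in X:d(x,L_n)<\beta _n\}$. Since $x\mapsto d(x,L_n)$ is $1$-Lipschitz, hence continuous, $U_n$ is open. With this notation, the defining condition ``$d(x,L_n)<\beta _n$ for infinitely many $n$'' is exactly $x\in\bigcap_{N\in\Bbb{N}}\bigcup_{n\ge N}U_n$, so that $W=\bigcap_{N}V_N$ with $V_N:=\bigcup_{n\ge N}U_n$ open. In particular, $W$ is a $\mathcal{G}_{\delta }$.

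The key step is to show that each $V_N$ is dense. I would use the trivial inclusion $L_n\subset U_n$, valid for every $n$ since $d(x,L_n)=0<\beta _n$ whenever $x\in L_n$. Because the subspaces $L_n$ are nested and increasing, dropping the first $N-1$ of them does not change their union, i.e. $\bigcup_{n\ge N}L_n=\bigcup_{n\in\Bbb{N}}L_n=L$. Hence $V_N\supset\bigcup_{n\ge N}L_n=L$, and since $L$ is dense in $X$, the open set $V_N$ is dense.

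Finally I would conclude by Baire category: $W=\bigcap_N V_N$ is a countable intersection of dense open subsets of the Banach (hence Baire) space $X$, so its complement $\bigcup_N(X\backslash V_N)$ is a countable union of closed sets with empty interior, that is, of first category. Therefore $W$ is residual.

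There is essentially no serious obstacle here; the whole argument rests on the observation that the threshold $\beta _n>0$ plays no role in the density step, since $L_n$ is contained in $U_n$ regardless of how small $\beta _n$ is. The only point requiring the slightest care is the identity $\bigcup_{n\ge N}L_n=L$, which uses that $(L_n)$ is an increasing chain, together with the standard fact that in a Baire space a countable intersection of dense open sets is residual.
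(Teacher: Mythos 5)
Your proof is correct and follows essentially the same route as the paper: both write $W$ as $\bigcap_{N}\bigcup_{n\geq N}\{x:d(x,L_{n})<\beta _{n}\}$, observe that each set in the union is open by continuity of $d(\cdot ,L_{n})$, and obtain density of each tail union from the inclusion of the dense subspace $L$ (using that the $L_{n}$ increase), before concluding by Baire category. The only cosmetic difference is that the paper phrases the inclusion as $L_{k}\subset W_{n}$ for $n\geq k$ rather than via the identity $\bigcup_{n\geq N}L_{n}=L$, which is the same observation.
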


\begin{proof}
By the continuity of $d(\cdot ,L_{n}),$ the set $W_{n}:=\{x\in
X:d(x,L_{n})<\beta _{n}\}$ is open in $X$ for every $n\in \Bbb{N}.$ In
addition, $L_{k}\subset W_{n}$ for every $n\geq k,$ so that $L_{k}\subset
\cup _{n\geq j}W_{n}$ for every $j,k\in \Bbb{N}.$ Thus, $L\subset \cup
_{n\geq j}W_{n}$ for every $j\in \Bbb{N}$ so that $\cup _{n\geq j}W_{n}$ is
open and dense in $X$ irrespective of $j\in \Bbb{N}.$ As a result, $\cap
_{j\in \Bbb{N}}\cup _{n\geq j}W_{n}$ is residual and coincides with the set $%
W$ of the lemma.
\end{proof}

\begin{theorem}
\label{th26}The set of points where the function $f$ of Example \ref{ex1} is
Hadamard differentiable is of first category in $X.$
\end{theorem}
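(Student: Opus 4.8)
The plan is to produce a residual subset of $X$ on which $f$ fails to be Hadamard differentiable, and then to absorb the exceptional set $L$ using its first-category smallness. Since $F_m=F_1=L$ and, by Theorem~\ref{th11}(i), $f$ is not even G\^{a}teaux differentiable at any point of $F_m$, the set of points of Hadamard differentiability is contained in $X\setminus L$; because $L$ is of first category, it therefore suffices to show that $f$ is \emph{not} Hadamard differentiable at any point of some residual subset of $X\setminus L$. The instrument for manufacturing such a set is Lemma~\ref{lm25}, applied to a sequence $(\beta_n)$ tuned to the gaps $1-\alpha_n$.

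Concretely, I would set $\beta_n:=(1-\alpha_n)^4$, which tends to $0$ and satisfies $(1-\alpha_n)/\sqrt{\beta_n}=1/(1-\alpha_n)\to\infty$, and let $W$ be the residual set supplied by Lemma~\ref{lm25} for this choice. Fix $x\in W\setminus L$, so that $f(x)=1$ and (as noted in the discussion of Example~\ref{ex1}) $Df(x)=0$. By the definition of $W$ there are infinitely many indices $n$ with $d(x,L_n)<\beta_n$; for each such $n$ choose $y_n\in L_n$ with $s_n:=\Vert x-y_n\Vert<\beta_n$. Since $x\notin L$ we have $s_n>0$, and since $y_n\in L_n$ we have $f(y_n)\le\alpha_n<1$. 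The difficulty is that $y_n$ clusters at $x$ along directions $u_n:=(y_n-x)/s_n$ that need not converge in an infinite-dimensional space, so these points do not immediately yield an admissible test pair for Hadamard differentiability.

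I would overcome this by a parabolic rescaling: put $t_n:=\sqrt{s_n}$ and $h_n:=(y_n-x)/t_n$, so that $x+t_nh_n=y_n$, $t_n\to0$, and crucially $\Vert h_n\Vert=\sqrt{s_n}<(1-\alpha_n)^2\to0$, i.e. $h_n\to0$. Then along this subsequence $\dfrac{f(x+t_nh_n)-f(x)}{t_n}=\dfrac{f(y_n)-1}{\sqrt{s_n}}\le\dfrac{\alpha_n-1}{\sqrt{s_n}}<\dfrac{\alpha_n-1}{\sqrt{\beta_n}}=-\dfrac{1}{1-\alpha_n}\longrightarrow-\infty$. Since $Df(x)\cdot0=0$, this violates the sequential characterization of Hadamard differentiability recorded in Subsection~\ref{GHdifferentiability} (taken with $h=0$ and the pair $(t_n,h_n)$, $h_n\to0$), so $f$ is not Hadamard differentiable at $x$. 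As $x\in W\setminus L$ was arbitrary and $W\setminus L$ is residual (residual minus first category), the set of points of Hadamard differentiability is contained in $(X\setminus W)\cup L$, a union of two first-category sets, hence of first category. The only genuine obstacle is the non-convergence of the directions $u_n$; the scaling $t_n=\sqrt{s_n}$ is precisely what removes it, forcing $h_n\to0$ while preserving $x+t_nh_n=y_n$, so that the admissible limit direction $h=0$ exhibits diverging difference quotients. Everything else is routine verification of inequalities, so I do not anticipate further difficulty.
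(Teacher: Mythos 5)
Your proof is correct and follows essentially the same route as the paper's: both apply Lemma~\ref{lm25} with $\beta_n$ a power of $1-\alpha_n$, rescale by $t_n=\sqrt{s_n}$ so that $h_n\to 0$ while $x+t_nh_n$ lands in $L_n$, and conclude that the difference quotient cannot tend to $Df(x)\cdot 0=0$, then absorb $L$ as a first category set. The only cosmetic differences are your exponent $(1-\alpha_n)^4$ in place of the paper's $(1-\alpha_n)^2$ and your use of arbitrary near-points of $L_n$ rather than the nearest-point projections $\xi_n$.
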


\begin{proof}
If $x\in X$ and $n\in \Bbb{N},$ let $\xi _{n}=\xi _{n}(x)\in L_{n}$ be such
that $||x-\xi _{n}||=d(x,L_{n})$ (recall $\dim L_{n}<\infty $). In Lemma \ref
{lm25}, let $\beta _{n}:=(1-\alpha _{n})^{2}$ (notation of Example \ref{ex1}%
), so that $W:=\{x\in X:||x-\xi _{n}||<(1-\alpha _{n})^{2}$ for infinitely
many indices $n\in \Bbb{N}\}$ is residual in $X.$ Below, we show that $f$ is
not Hadamard differentiable at any point $x\in W\backslash L.$ Since $f$ is
not even G\^{a}teaux differentiable at any point of $L=F_{1}$ by Theorem \ref
{th11} (i), this implies that $f$ is not Hadamard differentiable at any
point of $W,$ which proves the theorem.

If $x\in X\backslash L,$ then $\xi _{n}\neq x$ for every $n$ and $f(x)=1.$
We already know that $f$ is continuous and G\^{a}teaux differentiable at $x$
with derivative $0$ (the latter also follows from Theorem \ref{th11} (ii)).
To show that $f$ is not Hadamard differentiable at $x\in W\backslash L,$ it
suffices to find sequences $(h_{n})\subset X$ and $(t_{n})\subset (0,\infty
),$ both tending to $0,$ such that $\lim_{n\rightarrow \infty }\frac{
f(x+t_{n}h_{n})-f(x)}{t_{n}}\neq 0.$

Set $h_{n}:=\frac{\xi _{n}-x}{||x-\xi _{n}||^{1/2}}$ and $t_{n}:=||x-\xi
_{n}||^{1/2}.$ That $\lim_{n\rightarrow \infty }t_{n}=0$ and $%
\lim_{n\rightarrow \infty }h_{n}=0$ follows from the definition of $\xi _{n}$
and the denseness of $L$ in $X.$ On the other hand, 
\begin{equation*}
\left| \frac{f(x+t_{n}h_{n})-f(x)}{t_{n}}\right| =\frac{1-f(\xi _{n})}{
||x-\xi _{n}||^{1/2}}\geq \frac{1-\alpha _{n}}{||x-\xi _{n}||^{1/2}},
\end{equation*}
where the inequality follows from $\xi _{n}\in L_{n}$ and $f\leq \alpha _{n}$
on $L_{n}.$ Since $x\in W,$ there are infinitely many indices $n$ such that $%
||x-\xi _{n}||^{1/2}<1-\alpha _{n}.$ Therefore, $\frac{1-\alpha _{n}}{
||x-\xi _{n}||^{1/2}}>1$ for infinitely many indices $n.$ As a result, $%
\frac{f(x+t_{n}h_{n})-f(x)}{t_{n}}$ does not tend to $0$ and the proof is
complete.
\end{proof}

Although Theorem \ref{th26} demonstrates that the cases when $F_{m}$ is
nowhere dense or just of first category are different, it does not rule out
that $f$ is Hadamard differentiable except at the points of a Haar null set.
With special choices of $X,L_{n}$ and $\alpha _{n},$ it can be shown that
the set of points where $f$ is not Hadamard differentiable is not Aronszajn
null, but ``not Haar null'' has remained elusive.

\section{A peculiar convex subset of $\ell ^{2}$\label{peculiar}}

Corollaries \ref{cor23} and \ref{cor24} raise the question whether symmetry
or any extra assumption is really needed to ensure that a densely continuous
quasiconvex function is continuous and G\^{a}teaux differentiable at the
points of a dense subset of the separable Banach space $X.$ By Theorem \ref
{th20}, this would be true if $C^{\ddagger }$ in (\ref{3}) had empty
interior whenever $C$ is a convex subset of $X$ of first category.

In this section, we show that such a property does not hold, even when $%
X=\ell ^{2},$ by constructing a convex subset $C$ of $\ell ^{2}$ of first
category such that $C^{\ddagger }=\ell ^{2}$ (see (\ref{3})). Similar
subsets can be constructed in any $\ell ^{p},1\leq p<\infty .$

Let $P:=\{(x_{j}):x_{j}\geq 0$ $\forall j\in \Bbb{N}\}\subset \ell ^{2}$
denote the nonnegative cone, a closed convex subset with empty interior. Set 
$u:=(j^{-1})_{j\in \Bbb{N}}\in P$ and, for $n\in \Bbb{N},$ define $%
C_{n}:=-nu+P,$ an increasing sequence of closed convex subsets (not cones)
with empty interior. Then, $C:=\cup _{n\in \Bbb{N}}C_{n}$ is a convex subset
(even a cone) of $\ell ^{2}$ of first category and $P\subset C.$

\begin{lemma}
\label{lm27}For every $x\in \ell ^{2}\backslash \{0\},$ there is $h\in
P\backslash \{0\}$ such that, for every $t>0,x_{j}+th_{j}\geq 0$ for $j$
large enough.
\end{lemma}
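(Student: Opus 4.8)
The plan is to reduce the statement to a purely coordinatewise question about the negative part of $x$ and then invoke the elementary fact that $\ell^2$ admits no fastest rate of decay. Write $a_j:=\max\{-x_j,0\}$ for the negative part of $x_j$, so that $a=(a_j)\in P$ and $\sum_j a_j^2\le\sum_j x_j^2<\infty$ because $0\le a_j\le|x_j|$. The key observation is that for any $h\in P$ and any $t>0$ the inequality $x_j+th_j\ge 0$ is automatic at every index with $x_j\ge 0$ (both terms being nonnegative), while at an index with $x_j<0$ it reads $th_j\ge a_j$. Hence it suffices to produce $h\in P\setminus\{0\}$ with $h_j>0$ wherever $a_j>0$ and $a_j/h_j\to 0$ as $j\to\infty$ (reading the ratio as $0$ where $a_j=0$): given any $t>0$ one then has $a_j/h_j<t$, i.e. $a_j<th_j$, for all large $j$ with $a_j>0$, which together with the automatic indices yields $x_j+th_j\ge 0$ for all $j$ large enough.

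If $x_j\ge 0$ for all large $j$ (in particular if $a=0$ or $a$ has finite support), the choice $h:=u$ already works for every $t>0$, since $u\in P\setminus\{0\}$ and $x_j+tu_j=x_j+t/j\ge 0$ once $x_j\ge 0$. So the only substantial case is when $a$ has infinite support, and there I would build the dominating sequence explicitly. Set $R_j:=\sum_{k\ge j}a_k^2$; the infinite support guarantees $R_j>0$ for every $j$, and $R_j\downarrow 0$. Define $w_j:=R_j^{-1/2}$, so that $w_j\to\infty$, and put $h_j:=\sqrt{w_j}\,a_j$. Then $h\ge 0$, $h\ne 0$, and $h_j>0$ exactly where $a_j>0$, while $a_j/h_j=w_j^{-1/2}\to 0$, as required.

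It remains to check that $h\in\ell^2$, i.e. $\sum_j w_j a_j^2<\infty$; this is the one computation worth spelling out and is the crux of the argument. Using $a_j^2=R_j-R_{j+1}$ together with $\sqrt{R_j}-\sqrt{R_{j+1}}=(R_j-R_{j+1})/(\sqrt{R_j}+\sqrt{R_{j+1}})\ge(R_j-R_{j+1})/(2\sqrt{R_j})$, one obtains the telescoping bound
\[
\sum_{j}w_j a_j^2=\sum_j\frac{R_j-R_{j+1}}{\sqrt{R_j}}\le 2\sum_j\left(\sqrt{R_j}-\sqrt{R_{j+1}}\right)=2\sqrt{R_1}<\infty,
\]
so that $h\in P\setminus\{0\}$ and the reduction of the first paragraph applies.

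The hard part is really only locating this construction; conceptually it is the standard statement that for $a\in\ell^2$ one can find weights $w_j\to\infty$ with $\sum_j w_j a_j^2<\infty$, and the elementary device is the telescoping estimate above. The main things to be careful about are the bookkeeping at indices where $a_j=0$ (there $x_j\ge 0$ makes the target inequality automatic, so nothing is required of $h_j$) and the separation of the finite- and infinite-support cases, which is needed only to ensure that $w_j=R_j^{-1/2}$ is well defined.
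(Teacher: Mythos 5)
Your proof is correct and takes essentially the same approach as the paper: both arguments construct $h$ by multiplying a pointwise majorant of the negative part of $x$ by weights tending to infinity that are chosen slowly enough to keep $h$ in $\ell^{2}$, so that for any $t>0$ one has $t\,h_{j}\geq -x_{j}$ once the weight exceeds $t^{-1}$. The only difference is in how the weights are produced --- the paper uses block-constant weights equal to $i$ on blocks whose tails satisfy $\Sigma_{j>n_{i}}x_{j}^{2}\leq i^{-4}$, while you use the telescoping weight $R_{j}^{-1/4}$ built from the tails of $\Sigma_{j}a_{j}^{2}$; these are two standard realizations of the same device.
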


\begin{proof}
Given $x\in \ell ^{2},$ choose a strictly increasing sequence $%
(n_{i})\subset \Bbb{N}$ such that $\Sigma _{j=n_{i}+1}^{\infty
}x_{j}^{2}\leq i^{-4}.$ For $j\in \Bbb{N},$ let $h_{j}:=|x_{j}|$ if $1\leq
j\leq n_{1}$ and $h_{j}:=i|x_{j}|$ if $n_{i}+1\leq j\leq n_{i+1}.$ That $%
h:=(h_{j})\in \ell ^{2}$ follows from 
\begin{multline*}
\Sigma _{j=1}^{\infty }h_{j}^{2}=\Sigma _{j=1}^{n_{1}}h_{j}^{2}+\Sigma
_{i=1}^{\infty }\left( \Sigma _{j=n_{i}+1}^{n_{i+1}}h_{j}^{2}\right) = \\
\Sigma _{j=1}^{n_{1}}x_{j}^{2}+\Sigma _{i=1}^{\infty }i^{2}\left( \Sigma
_{j=n_{i}+1}^{n_{i+1}}x_{j}^{2}\right) \leq \Sigma
_{j=1}^{n_{1}}x_{j}^{2}+\Sigma _{i=1}^{\infty }i^{2}\left( \Sigma
_{j=n_{i}+1}^{\infty }x_{j}^{2}\right) \leq \\
\Sigma _{j=1}^{n_{1}}x_{j}^{2}+\Sigma _{i=1}^{\infty }i^{-2}<\infty .
\end{multline*}
It is obvious that $h\in P\backslash \{0\}$ and, if $t>0$ and $i\geq t^{-1},$
then, $x_{j}+th_{j}\geq 0$ provided that $j\geq n_{i}+1.$
\end{proof}

If $x\in \ell ^{2}\backslash \{0\},$ let $h\in P\backslash \{0\}$ be given
by Lemma \ref{lm27}, so that if $t>0,$ then $x_{j}+th_{j}\geq 0$ for $j$
large enough. Hence, $x_{j}+th_{j}+nu_{j}>0$ irrespective of $n\in \Bbb{N}$
for the same indices $j$ since $u_{j}>0$ for every $j.$ In addition, the
finitely many remaining terms $x_{j}+th_{j}+nu_{j}$ can all be made positive
if $n$ is chosen large enough. Thus, $x+th+nu\in P$ if $n$ is large enough,
whence $x+th\in C_{n}\subset C$ for every $t>0.$

Since $t>0$ above may be arbitrarily small, there is a sequence of $\limfunc{
span}\{x,h\}\cap C$ tending to $x,$ whence $x\in C^{\ddagger }.$ Thus, $\ell
^{2}\backslash \{0\}\subset C^{\ddagger }.$ Since also $0\in P\subset
C\subset C^{\ddagger },$ the proof that $C^{\ddagger }=\ell ^{2}$ is
complete.

Anecdotally, that $C^{\ddagger }=\ell ^{2}$ implies that $C$ has another
uncommon feature. Since it is of first category, $C\neq \ell ^{2}$ and so,
by the so-called Stone-Kakutani property, $C$ is contained in a convex
subset $\widehat{C}$ such that $\ell ^{2}\backslash \widehat{C}$ is also
convex and nonempty (\cite[p. 71]{Sc86}). But $\widehat{C}$ cannot be a
half-space $\Sigma :=\{x\in \ell ^{2}:l(x)\leq a\}$ (let alone $\{x\in \ell
^{2}:l(x)<a\}$) for any $a\in \Bbb{R}$ and any linear form $l$ on $\ell
^{2}, $\emph{\ continuous or not}. Indeed, if $C\subset \Sigma ,$ then $%
C^{\ddagger }\subset \Sigma ^{\ddagger }.$ Since every linear form is
continuous on finite dimensional subspaces, if follows from (\ref{3}) that $%
\Sigma ^{\ddagger }=\Sigma .$ Since $\Sigma \neq \ell ^{2}$ is obvious, a
contradiction is reached with $C^{\ddagger }=\ell ^{2}.$

The procedure used in Example \ref{ex1} shows how to construct quasiconvex
functions $f$ with $F_{m}=C$ which, in addition, are continuous at every
point of $X\backslash C$ (and so, densely continuous). Indeed, with the same
increasing sequence $(\alpha _{n}),$ it suffices to define $f(x)=\alpha _{1}$
on $C_{1},f(x)=\alpha _{n}$ on $C_{n}\backslash C_{n-1},n\geq 2$ and $f(x)=1$
on $X\backslash C.$ If $x\notin C,$ then $x\notin C_{n}$ for every $n$ and
so there is an open ball $B(x,\varepsilon _{n})$ such that $B(x,\varepsilon
_{n})\cap C_{n}=\emptyset .$ Thus, if $y\in B(x,\varepsilon _{n}),$ then
either $f(y)=1$ (if $y\notin C$) or $f(y)\geq \alpha _{n+1}$ (if $y\in C_{j}$
with $j\geq n+1$). In either case, $f(y)$ is arbitrarily close to $1$ if $n$
is large enough, which proves the continuity of $f$ at $x.$ Unfortunately,
the G\^{a}teaux differentiability question seems difficult to settle at the
points $x\notin C,$ irrespective of the choice of the sequence $(\alpha
_{n}).$ (The characteristic function of $X\backslash C$ is nowhere
G\^{a}teaux differentiable, but since it is also nowhere continuous, it is
not a counter example.)

\section{Essential G\^{a}teaux differentiability on a dense subset\label%
{essential}}

In the previous section, we saw that the question whether every densely
continuous quasiconvex function on a separable Banach space $X$ is
continuous and G\^{a}teaux differentiable on a dense subset of $X$ is not
fully resolved by Theorem \ref{th20}. We complete this paper by showing that
a positive answer can be given if the conditions for differentiability are
suitably relaxed.

The classical concept of G\^{a}teaux differentiability of $f$ at a point $x$
requires $\lim_{t\rightarrow 0}\frac{f(x+th)-f(x)}{t}=l_{x}(h)$ for some $%
l_{x}\in X^{*}$ and every $h\in X\backslash \{0\},$ that is, for every $h\in
S_{X},$ the unit sphere of $X.$ If this holds only for $h$ in a dense subset 
$\Omega $ of $S_{X},$ then $l_{x}$ is uniquely determined by $\Omega ,$ but
since different subsets $\Omega $ may produce different linear forms $l_{x},$
this does not provide a sound generalization of G\^{a}teaux
differentiability. On the other hand, if $\Omega $ is residual in $S_{X}$
and if $l_{x}^{\prime }\in X^{*}$ is similarly defined when $\Omega $ is
replaced by another residual set of directions $\Omega ^{\prime },$ then $%
l_{x}^{\prime }=l_{x}$ since $\Omega \cap \Omega ^{\prime }$ is residual,
and therefore dense, in $S_{X}$ (since $X$ is a Banach space, $S_{X}$ is a
complete metric space and so a Baire space).

Accordingly, we shall say that $f$ is \textit{essentially G\^{a}teaux
differentiable} at $x$ if $l_{x}(h)=\lim_{t\rightarrow 0}\frac{f(x+th)-f(x)}{%
t}$ for some $l_{x}\in X^{*}$ and every $h$ in some residual subset\footnote{%
By the corollary of the Kuratowski-Ulam theorem used in the proof of Theorem 
\ref{th29}, the definition is unchanged if it is required that $h$ belongs
to a residual cone in $X\backslash \{0\}.$} $\Omega $ of $S_{X}.$ If so,
from the above remarks, the (essential) G\^{a}teaux derivative $Df(x):=l_{x}$
is well defined and independent of $\Omega .$ The next theorem states that
for densely continuous quasiconvex functions on a separable Banach space,
continuity plus essential G\^{a}teaux differentiability always hold on a
dense subset. For clarity, we expound the simple geometric property of
convex sets used in the proof.

\begin{lemma}
\label{lm28}Let $X$ be a real vector space and let $C\varsubsetneq X$ be a
convex subset. Given $x\in X\backslash C$ and $h\in X\backslash \{0\},$
there is $t_{h}>0$ such that either $(x,x+t_{h}h)\subset C$ or $%
[x,x+t_{h}h)\subset X\backslash C.$
\end{lemma}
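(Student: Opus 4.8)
The plan is to reduce everything to the one-dimensional slice of $C$ along the ray issuing from $x$ in the direction $h$. Concretely, I would introduce the set
\[
J:=\{t\in\Bbb{R}:x+th\in C\}.
\]
Because $C$ is convex and $t\mapsto x+th$ is affine, $J$ is a convex subset of $\Bbb{R}$, i.e. an interval; and because $x\notin C$ we have $0\notin J$. All the relevant information is now encoded in the position of the interval $J$ relative to $0$, and for a forward ray only the part $J\cap(0,\infty)$ matters. The whole statement then follows from a short case analysis.

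I would split on whether $J\cap(0,\infty)$ is empty and, if not, on its infimum $t_{0}:=\inf\bigl(J\cap(0,\infty)\bigr)\ge 0$. If $J\cap(0,\infty)=\emptyset$, then $x+th\notin C$ for every $t>0$, and since also $x\notin C$ we get $[x,x+t_{h}h)\subset X\backslash C$ for \emph{any} $t_{h}>0$ (second alternative). If $J\cap(0,\infty)\neq\emptyset$ and $t_{0}>0$, then no $s\in[0,t_{0})$ lies in $J$ (for $s=0$ because $0\notin J$, for $0<s<t_{0}$ because $s<\inf$), so again $[x,x+t_{0}h)\subset X\backslash C$ and I take $t_{h}=t_{0}$. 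Finally, if $t_{0}=0$ I would pick any $t^{\ast}\in J$ with $t^{\ast}>0$; since $J$ is an interval with infimum $0$, the whole of $(0,t^{\ast})$ lies in $J$, which says exactly that $(x,x+t^{\ast}h)\subset C$ (first alternative), with $t_{h}=t^{\ast}$.

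There is no deep difficulty here; the only point requiring care is the bookkeeping at the endpoint $0$. The dichotomy in the statement corresponds precisely to whether $J$ accumulates at $0$ from the right (left endpoint $0$, necessarily open because $0\notin J$, giving an open segment immediately inside $C$) or is bounded away from $0$ (giving a closed segment, including $x$ itself, immediately outside $C$). The one computation I would actually write out is the claim $(0,t^{\ast})\subset J$ in the accumulation case: for $0<s<t^{\ast}$ choose $r\in J$ with $0<r<s$ (possible since $\inf=0$), and observe that $x+sh=\tfrac{t^{\ast}-s}{t^{\ast}-r}(x+rh)+\tfrac{s-r}{t^{\ast}-r}(x+t^{\ast}h)$ is a convex combination of two points of $C$, hence lies in $C$. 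I would keep the degenerate possibilities ($J$ a single point, $J$ unbounded) in view, but each is absorbed by the cases above, so no separate treatment is needed.
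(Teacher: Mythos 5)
Your proof is correct and rests on the same idea as the paper's: the trace $\{t>0:x+th\in C\}$ of $C$ on the ray is an interval by convexity, and the dichotomy is exactly whether that interval accumulates at $0$. The paper organizes this as a proof by contradiction with a decreasing sequence $t_n\to 0$, while you run a direct trichotomy on $\inf\bigl(J\cap(0,\infty)\bigr)$, but the substance (including the convex-combination computation) is identical.
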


\begin{proof}
Suppose that there is no $t_{h}>0$ such that $(x,x+t_{h}h)\subset C,$ i.e.,
that $(x,x+th)$ contains a point of $X\backslash C$ for every $t>0.$ We show
that there is $t_{h}>0$ such that $[x,x+th)\subset X\backslash C.$ If not,
since $x\notin C,$ there is a decreasing sequence $(t_{n})\subset (0,\infty
) $ such that $\lim t_{n}=0$ and $x+t_{n}h\in C$ for every $n.$ Since $C$ is
convex, $x+th\in C$ for $t\in [t_{n},t_{1}]$ and every $n.$ Thus, $%
(x,x+t_{1}h]\subset C$ contains no point of $X\backslash C,$ which is a
contradiction.
\end{proof}

\begin{theorem}
\label{th29}Let $X$ be a separable Banach space and let $f:X\rightarrow \Bbb{%
R}$ be quasiconvex and densely continuous. Then, $f$ is continuous and
essentially G\^{a}teaux differentiable on a dense subset of $X.$ \newline
If $X$ is reflexive, $f$ is continuous and essentially G\^{a}teaux
differentiable on $X$ except at the points of a Haar null set.
\end{theorem}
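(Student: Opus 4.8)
The plan is to isolate the directions along which the difference quotients are controlled and then, via a Kuratowski--Ulam (category Fubini) argument, to show that for a large set of base points the remaining ``bad'' directions form only a meager set. First I reduce to the essential case. If $F_{m}^{\prime }$ is of first category it is nowhere dense by \thmref{th5}(i), so $F_{m}$ is nowhere dense and \thmref{th14} and \thmref{th15} already give the stronger (Hadamard) conclusion; hence I assume $F_{m}^{\prime }$ is of second category, so that $\overset{\circ }{F_{m}^{\prime }}\neq \emptyset $ and $m>-\infty $ by \thmref{th5}(ii). On $X\backslash F_{m}^{\prime }$, \thmref{th9} already yields Hadamard, hence essential G\^{a}teaux, differentiability off an Aronszajn null set and on a dense subset. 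Thus the whole problem is to produce points of continuity and essential G\^{a}teaux differentiability densely inside $U:=\overset{\circ }{F_{m}^{\prime }}$, on which $f\leq m$ and $f=m$ off the first category set $F_{m}$.

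The key is a one--dimensional dichotomy. Fix $x\in U$ with $f(x)=m$, so $x\notin F_{m}$. For $h\in S_{X}$, \lemref{lm28} applied to the convex set $F_{m}$ shows that either $(x,x+t_{h}h)\subset F_{m}$ for some $t_{h}>0$, or $[x,x+t_{h}h)\subset X\backslash F_{m}$ for some $t_{h}>0$; in the latter case $x+th\in F_{m}^{\prime }$ for small $t$ together with $x+th\notin F_{m}$ forces $f(x+th)=m$, so the one--sided derivative in direction $h$ is $0$. Writing $B(x):=\{h\in S_{X}:(x,x+t_{h}h)\subset F_{m}$ for some $t_{h}>0\}$, it follows that whenever $h\notin B(x)\cup (-B(x))$ the two--sided directional derivative of $f$ at $x$ exists and equals $0$. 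Hence, if $B(x)$ is meager in $S_{X}$ and $f$ is continuous at $x$, then $f$ is continuous and essentially G\^{a}teaux differentiable at $x$ with $Df(x)=0$, since the complement of $B(x)\cup (-B(x))$ is then a residual set of directions.

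To control $B(x)$ for residually many $x$, I enlarge $F_{m}$ to the Borel ($\mathcal{F}_{\sigma }$) first category set $\widetilde{F}:=\cup _{\alpha <m,\,\alpha \in \Bbb{Q}}\overline{F}_{\alpha }\supset F_{m}$, each $\overline{F}_{\alpha }$ being nowhere dense by \thmref{th4}. Consider $\widetilde{P}:=\{(x,h)\in U\times S_{X}:x+th\in \widetilde{F}$ for all small $t>0\}$. For each fixed $h$ and any single admissible $t_{0}$, the section $\widetilde{P}^{h}$ is contained in the translate $\widetilde{F}-t_{0}h$, hence is of first category; and $\widetilde{P}$ has the Baire property, being obtained from the Borel set $\{(x,h,t):x+th\in \widetilde{F}\}$ by a projection and a countable union, operations that keep one within the $\sigma $-algebra of sets with the Baire property. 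By the Kuratowski--Ulam theorem $\widetilde{P}$ is of first category in $U\times S_{X}$, so for a residual set of $x\in U$ the section $\widetilde{P}_{x}\supset B(x)$ is meager in $S_{X}$. Intersecting with the residual set of continuity points of $f$ and with the comeager set $U\backslash F_{m}$ produces a dense set of $x\in U$ at which $f$ is continuous and essentially G\^{a}teaux differentiable; since $\overset{\circ }{F_{m}^{\prime }}$ is dense in $F_{m}^{\prime }$ and \thmref{th9} supplies such points densely in $X\backslash F_{m}^{\prime }$, density in all of $X$ follows.

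For the reflexive statement the exceptional set must be shown Haar null. Its discontinuity part is Haar null by \thmref{th6}; the non--differentiability part of $X\backslash F_{m}^{\prime }$ and the boundary $\partial F_{m}^{\prime }$ are Aronszajn null by \thmref{th9} and \thmref{th5}(ii); and $F_{m}\subset \widetilde{F}$ is Haar null because each $\overline{F}_{\alpha }$ is a closed convex set with empty interior and $X$ is reflexive and separable (Subsection~\ref{haar}). The genuinely hardest piece is $N:=\{x\in U\cap f^{-1}(m):B(x)\cup (-B(x))$ is non--meager$\}$: the Kuratowski--Ulam step only says $N$ is meager, whereas I need it Haar null. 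This is exactly the \emph{mixed} difficulty of the problem---category in the direction variable $h$, Haar measure in the base variable $x$---and I expect it to be the main obstacle. The plan is to upgrade the category Fubini step to a hybrid one: in the reflexive case each section $\widetilde{P}^{h}\subset \widetilde{F}-t_{0}h$ is not merely meager but Haar null, so I would combine a probability measure witnessing the Haar nullity of $\widetilde{F}$ with the Baire--category control of the sections $\widetilde{P}_{x}$, through a Fubini/Kuratowski--Ulam hybrid, to place $N$ inside a Haar null Borel set. Making this measure--category interchange precise is the crux of the reflexive assertion; every other contribution to the exceptional set is handled by results already established.
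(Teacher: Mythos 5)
Your reduction, your use of \lemref{lm28}, and your identification of the bad direction set $B(x)=\{h\in S_{X}:(x,x+t_{h}h)\subset F_{m}$ for some $t_{h}>0\}$ all match the paper, and the dense-differentiability half of your argument can be made to work. But the reflexive half has a genuine gap, and you have named it yourself: your Kuratowski--Ulam argument on the product $U\times S_{X}$ only tells you that $B(x)$ is meager for \emph{residually many} $x$, so you are left with a set $N$ of base points that you know to be meager but cannot show to be Haar null, and your proposed ``Fubini/Kuratowski--Ulam hybrid'' is only a plan, not a proof. As written, the second assertion of the theorem is not established.

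The gap is an artifact of quantifying over base points at all: the set $N$ you are worried about is in fact \emph{empty}. Fix any single $x\in \overset{\circ }{F_{m}^{\prime }}\backslash F_{m}$ and consider $H_{x}:=\{h\in X\backslash \{0\}:(x,x+t_{h}h)\subset F_{m}$ for some $t_{h}>0\}.$ This is a cone (invariant under multiplication by positive scalars) and is contained in $\cup _{n\in \Bbb{N}}\,n(F_{m}-x),$ hence is of first category in $X\backslash \{0\}$ because $F_{m}$ is. Now apply Kuratowski--Ulam not to $U\times S_{X}$ but to $(0,\infty )\times S_{X},$ which is homeomorphic to $X\backslash \{0\}$ via $z\mapsto (||z||,z||z||^{-1})$: since $H_{x}$ corresponds to the product $(0,\infty )\times (H_{x}\cap S_{X})$ and is meager, the trace $H_{x}\cap S_{X}=B(x)$ is meager in $S_{X}$ for \emph{every} such $x,$ not merely residually many. (Separability of $X$ gives the countable base needed for Kuratowski--Ulam, and no Baire-property verification for an analytic set is required.) With this pointwise statement, the exceptional set inside $\overset{\circ }{F_{m}^{\prime }}$ consists only of $F_{m}$ and the discontinuity points; in the reflexive case $F_{m}\subset \cup _{\alpha <m,\alpha \in \Bbb{Q}}\overline{F}_{\alpha }$ is Haar null (each $\overline{F}_{\alpha }$ is closed, convex, with empty interior by \thmref{th4}), the discontinuity set is Haar null by \thmref{th6}, and $\partial F_{m}^{\prime }$ together with the bad set in $X\backslash F_{m}^{\prime }$ is Aronszajn null by \thmref{th5}(ii) and \thmref{th9}. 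So the measure--category interchange you anticipated as the crux is never needed; replacing your product-space Fubini step by the cone argument at each fixed $x$ closes the proof.
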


\begin{proof}
As in the proof of Theorem \ref{th20}, we may assume that $m:=\mathcal{T}%
\limfunc{ess}\inf_{X}f>-\infty $ and that $F_{m}^{\prime }$ is of second
category, whence $\overset{\circ }{F_{m}^{\prime }}\neq \emptyset $ by
Theorem \ref{th5} (ii). We shall show that if $x\in \overset{\circ }{%
F_{m}^{\prime }}\backslash F_{m},$ the essential G\^{a}teaux derivative of $%
f $ at $x$ exists and is $0.$ Since $\overset{\circ }{F_{m}^{\prime }}%
\backslash F_{m}$ is residual in $\overset{\circ }{F_{m}^{\prime }}$ and $f$
is continuous at the points of a residual subset of $\overset{\circ }{%
F_{m}^{\prime }},$ this implies that $f$ is continuous and essentially
G\^{a}teaux differentiable at the points of a residual subset of $\overset{%
\circ }{F_{m}^{\prime }}$ and hence at the points of a dense subset of $%
F_{m}^{\prime }.$ By Theorem \ref{th9} and Theorem \ref{th5} (iii), $f$ is
Hadamard differentiable at the points of a dense subset of $X\backslash
F_{m}^{\prime },$ which proves the first part of the theorem.

Let then $x\in \overset{\circ }{F_{m}^{\prime }}\backslash F_{m}$ be given.
The set $H_{x}:=\{h\in X\backslash \{0\}:(x,x+t_{h}h)\subset F_{m}$ for some 
$t_{h}>0\}$ is a cone\footnote{%
It is also convex, but this is unimportant in this proof.} (invariant by
multiplication by positive scalars). Also, $H_{x}\subset X\backslash \{0\}$
and, if $h\in H_{x},$ then $h\in n(F_{m}-x)$ for any $n\in \Bbb{N}$ such
that $n^{-1}<t_{h}.$ Since $F_{m}$ is of first category, $H_{x}\subset \cup
_{n\in \Bbb{N}}\,n(F_{m}-x)$ is of first category in $X$ and, hence, in $%
X\backslash \{0\}$ as well (if $E$ is a closed subset of $X$ with empty
interior in $X,$ then $E\backslash \{0\}$ is a closed subset of $X\backslash
\{0\}$ with empty interior in $X\backslash \{0\}$).

That $H_{x}\subset X\backslash \{0\}$ is a cone implies that it is
homeomorphic to $(0,\infty )\times (H_{x}\cap S_{X})$ through the
homeomorphism $z\rightarrow \left( ||z||,z||z||^{-1}\right) $ of $%
X\backslash \{0\}$ onto $(0,\infty )\times S_{X}.$ Since homeomorphisms
preserve Baire category, $(0,\infty )\times (H_{x}\cap S_{X})$ is of first
category in $(0,\infty )\times S_{X}.$ By a classical corollary of the
Kuratowski-Ulam theorem \cite[Theorem 15.3]{Ox80}, it follows that $%
H_{x}\cap S_{X}$ is of first category in $S_{X}.$ (The use of this theorem
requires the topology of $S_{X}$ to have a countable base, which is the case
since $X$ is separable.) Equivalently, $S_{X}\backslash H_{x}$ is residual
in $S_{X}.$

Now, if $h\in S_{X}\backslash H_{x},$ then $[x,x+t_{h}h)\subset X\backslash
F_{m}$ for some $t_{h}>0$ by Lemma \ref{lm28}. Thus, if $h\in
S_{X}\backslash (H_{x}\cup -H_{x})$ (also residual in $S_{X}$), there is $%
t_{h}>0$ such that $(x-t_{h}h,x+t_{h}h)\subset X\backslash F_{m}.$ Since $%
x\in \overset{\circ }{F_{m}^{\prime }},$ it is not restrictive to assume,
after shrinking $t_{h}$ if necessary, that $(x-t_{h}h,x+t_{h}h)\subset 
\overset{\circ }{F_{m}^{\prime }}\backslash F_{m}\subset f^{-1}(m).$ This
makes it obvious that the derivative of $f$ in the direction $h$ exists and
is $0.$

Suppose now that $X$ is reflexive. From the above and Theorem \ref{th9}, the
points where $f$ does not have an essential G\^{a}teaux derivative can only
be in $F_{m},$ or in $\partial F_{m}^{\prime }$ (Aronszajn null since $%
\overset{\circ }{F_{m}^{\prime }}\neq \emptyset $) or in a subset of $%
X\backslash F_{m}^{\prime }$ contained in an Aronszajn null set. Thus, to
show that this set is contained in a Haar null set, it suffices to check
that $F_{m}$ is contained in a Haar null set. This follows from $%
F_{m}\subset \cup _{\alpha <m,\alpha \in \Bbb{Q}}\overline{F}_{\alpha }$
since $\overline{F}_{\alpha }$ is convex with empty interior when $\alpha <m$
(Theorem \ref{th4}) and $X$ is reflexive (Subsection \ref{haar}).

Since the set of points of discontinuity of $f$ is Haar null (Theorem \ref
{th6}) and the union of two Haar null sets is Haar null, the proof is
complete.
\end{proof}

There is a close connection between Theorem \ref{th20} and Theorem \ref{th29}
: If $H_{x}=\emptyset $ in the above proof, then $f$ is G\^{a}teaux
differentiable at $x$ (and $Df(x)=0$). On the other hand, by Lemma \ref{lm19}
, $H_{x}=\emptyset $ if and only if $x\notin F_{m}^{\ddagger }$.

\end{document}